\providecommand{\keywords}[1]{\textbf{\textit{Key words and phrases }} #1}
\providecommand{\subjclass}[1]{\textbf{\textit{2010 Mathematics Subject Classification.}} #1}
\theoremstyle{definition}
\newtheorem{theo}{Theorem}[subsection]
\newtheorem{pr}[theo]{Proposition}
 \newtheorem{lem}[theo]{Lemma}
 \newtheorem{coro}[theo]{Corollary}
\theoremstyle{remark}
\newtheorem{rema}[theo]{Remark}
\theoremstyle{definition}
\newtheorem{defi}[theo]{Definition}
\numberwithin{equation}{subsection}
\newcommand\cu{\underline{C}}
\newcommand\du{\underline{D}}
\newcommand\au{\underline{A}}
\newcommand\aucp{\underline{A}_{\cp}}
\newcommand\bu{\underline{B}}
\newcommand\hu{\underline{H}}
\newcommand\obj{\operatorname{Obj}}
\newcommand\mo{\operatorname{Mor}}
\newcommand\id{\operatorname{id}}
\DeclareMathOperator\adfu{\operatorname{AddFun}}
\DeclareMathOperator\kar{\operatorname{Kar}}
 \DeclareMathOperator\cok{\operatorname{Coker}}
\DeclareMathOperator\co{\operatorname{Cone}}
\DeclareMathOperator\ext{\operatorname{Ext}^{1}}
\DeclareMathOperator\extaucp{\operatorname{Ext}^{1}_{\aucp}}
\newcommand\hw{{\underline{Hw}}}
\newcommand\hrt{{\underline{Ht}}}
\newcommand\hf{{\underline{HF}}}
\newcommand\chowe{\underline{Chow}^{eff}}
\newcommand\chow{\operatorname{Chow}}
\newcommand\wchow{w_{Chow}{}}
\newcommand\wstu{w^{st}}
\newcommand\dm{DM}
\newcommand\da{DA^{et}(k,k)}
\newcommand\omdr{\mathbf{\Omega}}
\newcommand\omp{\mathbf{\Omega}'}
\newcommand\dmgm{DM^{gm}}
\DeclareMathOperator\cha{\operatorname{char}}
\newcommand\q{{\mathbb{Q}}}
\newcommand\z{{\mathbb{Z}}}
\newcommand\ob{^{-1}}
\newcommand\lam{\Lambda}
\newcommand\gam{\Gamma}
\newcommand\eps{\varepsilon}
\newcommand\ns{\{0\}}
\newcommand\ab{\operatorname{Ab}}
\newcommand\abfr{\operatorname{FreeAb}}
\newcommand\lvect{L-\operatorname{vect}}
\newcommand\ls{l_{S}}
\newcommand\cp{\mathcal{P}}
\newcommand\perpp{{}^{\perp}}
\newcommand\opp{^{op}}
\newcommand\shg{SH(G)}%index?? universe??!
\newcommand\wg{w^G}
\newcommand\shgl{SH_{\lam}(G)}%index?? universe??!
\newcommand\wgl{w_{\lam}^G}
\newcommand\macg{\mathcal{M}_G}
\newcommand\emg{\operatorname{EM}_G}
\newcommand\emo{\operatorname{EM}}
\newcommand\egam{\operatorname{EM}^{\gam}}
\newcommand\wsp{w^{sph}}
\newcommand\hwsp{\hw^{sph}}
\newcommand\shtop{SH}
\newcommand\hsing{H^{sing}}
\newcommand\hsingc{H_{sing}}
\newcommand\cuw{\cu_w}
\newcommand\kw{K_{\mathfrak{w}}}
\newcommand\ca{{\mathcal{A}}}
\newcommand\cacp{{\mathcal{A}_{\cp}}}
\begin{document}

\title
 {On morphisms killing weights and Hurewicz-type theorems}%weight complexes,  pure functors, and detecting  weights}
\author{Mikhail V. Bondarko
   \thanks{ %%!!!!
 The main results of the paper were  obtained under support of the Russian Science Foundation grant no. 16-11-00073.}}\maketitle
\begin{abstract} 
We study certain  "canonical weight decompositions" and apply the general theory to stable homotopical  and motivic examples. % slightly generalizing that defined by J. Wildeshaus. 
 
For a triangulated category $\cu$, any integer $n$, and a weight structure $w$ on $\cu$ %gives an {\it $n$-weight decomposition} 
 a triangle $LM\to M\to RM\to LM[1]$, where $LM$ is of weights at most $m-1$ and $RM$ is of weights at least $n+1$ for some $m\le n$, is determined by $M$ if exists. In this case we say (following J. Wildeshaus) that $M$ is {\it without weights $m,\dots,n$}. Since this happens if and only if the {\it weight complex} $t(M)\in \obj K(\hw)$  ($\hw$ is the {\it heart} of $w$) is homotopy equivalent to a complex with zero terms in degrees  $-n,\dots, -m$, this condition can be "detected" via {\it pure functors}. One can also take  $m=-\infty$ or $n=+\infty$  to obtain that the weight complex functor is "conservative and detects weights up to objects of infinitely small and infinitely large weights"; this is a significant improvement over previously known bounded conservativity results. % one may say that objects of $\cu$ may be detected by means of objects of a much simpler category $\kw(\hw)$ (this is a certain category of complexes over the heart $\hw$ of $w$, whereas %the latter 
 %$\hw$ is a category of Chow motives for Chow weight structures, and a certain additive orbit category for equivariant spherical ones). 
Applying this statement we %demonstrate that our results
 calculate intersections of certain purely generated subcategories and  %prove that certain weight-exact functors are conservative up to weight-degenerate objects
 study the conservativity of weight-exact functors. The main tool  is the new interesting notion of {\it morphisms killing weights $m,\dots, n$} that we study in detail as well.

We apply  general results to %to equivariant stable homotopy categories 
{\it spherical weight structures} for  $G$-spectra  (as introduced in the previous paper) and to Voevodsky motives. %We obtain that a $G$-spectrum is  without weights $m,\dots, n$ %if and only if 
 %whenever its %$RO(G)$-graded homology  $H^G_*$ (with values in Mackey functors) vanishes in these degrees; this is also equivalent to the vanishing of 
 %Bredon cohomology corresponding to $G$-Mackey functors vanishes in these degrees. % (here we take arbitrary Mackey functors and consider cohomology in degrees $m,\dots, n$). 
 %Moreover, the aforementioned conservativity of the weight complex functor results give 
%Moreover, we obtain a certain 
 This gives a converse to the (equivariant) stable Hurewicz theorem; %since in the case of a trivial group (and so, $\cu=\shtop$) the weight complex functor essentially calculates singular homology,
 in particular,  the singular homology of % a spectrum
  $E\in \obj \shtop$ vanishes in %non-positive 
 negative degrees if and only if  $E$ is an extension of a connective spectrum by an acyclic one. Moreover, the vanishing of $\hsing_i(E)$ for two subsequent values of $i$ gives a canonical (weight) "decomposition" of $E$ into a distinguished triangle as above. %????Furthermore, a $\shtop$-morphism $g$ kills weight $m$  if and only if $\hsingc^m(-,\gam)(g)=0$ for any abelian group $\gam$.
\end{abstract}
\subjclass{Primary 18E30   14C15 55P42  55N91; Secondary 55P91 18G25 18E40.}

\keywords{Triangulated category, weight structure, weight complex, weight-exact functor, conservativity, pure functors, equivariant stable homotopy category, Hurewicz theorems, Mackey functors, %Bredon cohomology, 
 motives.}

\tableofcontents

 \section*{Introduction}

Let us recall that for any object $M$ of a triangulated category $\cu$ and any integer $n$ a weight structure $w$ on $\cu$ gives (essentially by definition) an {$n$-weight decomposition} triangle $LM\to M\to RM\to LM[1]$, where $LM$ is of weights at most $n$ and $RM$ is of weights at least $n+1$;\footnote{I.e., $LM\in \cu_{w\le n}=\cu_{w\le 0}[n]$ and $RM\in \cu_{w\ge n+1}=\cu_{w\ge 0}[n+1]$.}  however, this triangle is (usually) not canonical. In particular, for the {\it spherical} weight structure on the stable homotopy category $\shtop$ (see \S4.2 of \cite{bwcp} and Theorem \ref{tsh}(\ref{itopass}) below) one can take $LM$ to be an arbitrary choice of an {\it $n$-skeleton} for the spectrum $M$ (in the sense of  \S6.3 of \cite{marg}); thus $LM$ is not determined by $M$. However, it was noticed by J. Wildeshaus (in %Definition 1.6 
 Proposition 1.7 of \cite{wild}) that  if one can choose an $n$-weight decomposition such that $LM$ is of weight at most $m-1$ for some $m\le n$ then this stronger assumption makes the decomposition canonical. In this case $M$ is said to be {\it without weights $m,\dots, n$}.  In this paper we prove that $M$ satisfies %(a minor modification of) 
  this condition if and only if its {\it weight complex} $t(M)$ is homotopy equivalent to a complex with zero terms in degrees  $-n,\dots, -m$;\footnote{Here one has to assume that $\cu$ is {\it weight-Karoubian}, i.e., that $\hw$ is idempotent complete; see %Proposition 
	 Theorem \ref{tpwckill}(\ref{iwckill4}) 
	%Proposition \ref{pwkar}(2) 
	 and \S\ref{sindwd}. However, this is a rather reasonable assumption since it is obviously fulfilled whenever $\cu$ is idempotent complete itself.}
	recall here that $t$ is a "weakly exact" functor from $\cu$ into a certain quotient $\kw(\hw)$ of the homotopy category of complexes in the {\it heart} $\hw$ of $w$.  %\footnote{It appears that $t$ can "usually" be "enhanced" to an  exact functor $t_{st}:\cu\to K(\hw)$; see Corollary 3.5 of \cite{sosnwc} and \S6.3 of \cite{bws}.}%say only once; delete below?! 
 It easily follows that  one can find out whether  $M$ is  without weights $m,\dots, n$ by applying {\it pure functors} (as introduced in \S2.1 of \cite{bwcp}; see Definition \ref{dpure} below) to $M$. %(both of these notions were defined in previous papers of the author). 
Moreover,  one can  put   $m=-\infty$ or $n=+\infty$ in these statements to obtain that %the weight complex functor 
 $t$ is "conservative up to ({\it weight-degenerate}) objects of infinitely small and infinitely large weights". This is a significant improvement over previously known bounded conservativity results;  one may say that objects of $\cu$ may be "detected" by means of objects of a much simpler category $\kw(\hw)$. We apply our conservativity of weight complexes result (and illustrate this yoga) to calculate certain intersections of purely generated subcategories (this result was %already
  applied in \cite{binters}) and to prove that certain weight-exact functors are conservative up to weight-degenerate objects. The latter statement generalizes Theorems 2.5 and 2.8 of \cite{wildcons} %the (more general) question of the conservativity of {\it weight-exact} functors was studied. We demonstrate (in \S\ref{sprtwcons} below) that our results imply a certain stronger statement of this form for 
 (in particular, to not necessarily bounded weight structures); we also explain that   our Proposition \ref{pwcons} can be applied to an interesting motivic functor that was mentioned in \cite[Remarks 3.1.4(2), 3.3.2(1)]{bwcp} and is closely related to \cite{ayoubcon}.

Moreover, we apply our general results to equivariant stable homotopy categories and {\it spherical weight structures} on them (as introduced in \S4 of \cite{bwcp}). %We obtain that a $G$-spectrum is without weights $m,\dots, n$ if and only if its $RO(G)$-graded homology  $H^G_*$ (with values in Mackey functors) in these degrees vanishes; this is also equivalent to the vanishing of Bredon cohomology corresponding to Mackey functors (here we take arbitrary Mackey functors and consider cohomology in degrees $m,\dots, n$). Moreover, the 
 Then the aforementioned conservativity of the weight complex functor results give a certain converse to the (equivariant) stable Hurewicz theorem. In particular, in the case of a trivial group (and so, $\cu=\shtop$) the weight complex functor essentially calculates singular homology; thus we obtain that the singular homology of a spectrum $E\in \obj \shtop$ vanishes in %non-positive 
 negative degrees if and only if  $E$ is an extension of a connective spectrum by an acyclic one. This statement appears to be completely new, since in all the previously existing formulations only the case where $E$ is {\it bounded below} was considered (see Theorem 2.1(i) of \cite{lewishur}, Proposition 7.1.2(f) of \cite{axstab}, and Theorem 6.9 of \cite{marg}).
Moreover, the vanishing of $\hsing_i(E)$ for two subsequent values of $i$ gives a canonical "decomposition" of $E$ into a distinguished triangle; this result (along with its equivariant generalization) appears to be completely new as well. %Furthermore, a $\shtop$-morphism $g$ kills weight  if and only if $\hsingc^m(-,\gam)(g)$ for any abelian group $\gam$.????

The main tool for obtaining these results is the new interesting notion of {\it morphisms killing weights $m,\dots, n$}; %that we study in detail as well.  
 for a morphism $g:M\to N$ this means that $g$ is "compatible with"  some morphism  $w_{\le n}M\to w_{\le m-1}N$. % (we write this as $g\in\mo_{\cancel{[m,n]}}\cu$).  
 We prove that this definition of killing weights  for $g$ is equivalent to several other ones. In particular, if $m=n$ then this condition is equivalent to an easily formulated property of $t(g)$;  hence an $\shtop$-morphism $g$ kills weight $m$ if and only if $\hsingc^m(-,\gam)(g)=0$ for any abelian group $\gam$. More generally, one may say that an $\shtop$-morphism $g$ kills weights $m,\dots, n$ whenever it sends $n$-skeleta into $m-1$-ones
(see  Proposition \ref{pkillw}(\ref{ikw7}) and Theorem \ref{tsh}(\ref{itopass})).

Let us now describe the contents of the paper; some more information of this sort can also be found in the beginnings of sections. 

 \S\ref{sold} is mostly dedicated to the recollection of the existing theory of weight structures; yet we also prove some new %properties of weight structures and 
 statements (including certain properties of homotopy categories of complexes).

In \S\ref{skw} we  define  morphisms killing weights $m,\dots, n$ and  objects without these weights; we also  study these notions in detail. In particular, we relate killing weights to weight complexes and pure functors; this gives a new conservativity of the weight complex functor result.

In \S\ref{snkar} we extend some of the results of the previous section to the case where $\hw$ (and hence also $\cu$) is not %(necessarily) 
 Karoubian (i.e., idempotents do not necessarily yield direct summands in it); so we formulate %Proposition \ref{pwkar} and 
 Theorem \ref{twkar} that is central for this paper. Moreover, the examples that we give demonstrate that the corresponding modifications of the formulations  from \S\ref{skw} (as made in %Proposition \ref{pwkar}(2) and 
 Theorem \ref{twkar}) are necessary. %ewp and ewn?!
 We also show that our results can be used to calculate certain intersections of purely generated subcategories %(this result was  %already  applied in \cite{binters})
  and to prove that certain weight-exact functors are "conservative up to weight-degenerate objects".

In \S\ref{smash} we apply our general results to the study of {\it purely compactly generated categories}. Next we consider %and to
  (equivariant) stable homotopy examples to obtain the aforementioned Hurewicz-type theorems along with several related statements. 

%?????!The author is deeply grateful to prof. J.P. May for his very useful answers concerning equivariant homotopy categories.
%The author is deeply grateful to the Max Planck Institut f\"ur Mathematik  for the support received and its hospitality during the work on this version.

The author is deeply grateful to the referees for  really actual comments. He is also very grateful to the Max Planck Institut f\"ur Mathematik  for the support received and its  hospitality during the work on this version.

\section{Weight structures: reminder}\label{sold}

In \S\ref{snotata} we introduce some %(mostly, categorical) 
notation and %definitions. 
conventions.

In \S\ref{ssws} we recall some basics on weight structures. The only new statement of this section is Proposition \ref{pbw}(\ref{icompidemp}); it is rather technical but quite important for this paper.

In \S\ref{sswc} we recall  some %basics on 
properties of  weight complex functors and study the properties of the weak homotopy equivalence relation for morphisms between complexes. %?????!! 

\subsection{Some (categorical) notation }\label{snotata} %and lemmas??

\begin{itemize}

\item Given a category $C$ and  $X,Y\in\obj C$  we will write
$C(X,Y)$ for  the set of morphisms from $X$ to $Y$ in $C$.

%\item For categories $C',C$ we write $C'\subset C$ if $C'$ is a full subcategory of $C$. {del}

\item %Given a category $C$ and  $X,Y\in\obj C$, 
We say that an object $X$ of $C$ is a {\it
retract} of $Y\in \obj \cu$ %(and $Y$ is {\it coretract} of $X$)
 if $\id_X$ can be %factorized as $X\stackrel{i}{\to} Y\stackrel{p}{\to}X$
 factored through $Y$. %\footnote{Clearly,  if $C$ is triangulated or abelian, then $X$ is a retract of $Y$ if and only if $X$ is its direct summand.}\ {del}

\item A %n additive  (not necessarily additive) 
 subcategory $\hu$ of an additive category $C$ 
%$\hu\subset C$ a subcategory $\hu$ is called
is said to be {\it retraction-closed} in $C$ if it contains all retracts of its objects in $C$.

\item  For any $(C,\hu)$ as above the full subcategory $\kar_{C}(\hu)$ of %an additive category
 $C$ whose objects
are all retracts of %objects of 
 (finite) direct sums of objects %of a subcategory 
$\hu$ in $C$ will be called the {\it retraction-closure} of $\hu$ in $C$; note that this subcategory is obviously additive and retraction-closed in $C$. 

\item Below $\au$ will always  denote some abelian category; $\bu$ is an additive category.

\item The {\it Karoubi envelope} $\kar(\bu)$ (no lower index) of %an additive category 
 $\bu$ is the category of ``formal images'' of idempotents in $\bu$. %\cite{bashli}!!
So, its objects are the pairs $(A,p)$ for $A\in \obj \bu,\ p\in \bu(A,A),\ p^2=p$, and the morphisms are given by the formula %(\ref{mthen}) below,
%\begin{equation}\label{mthen}
$$\kar(\bu)((X,p),(X',p'))=\{f\in \bu(X,X'):\ p'\circ f=f \circ p=f \}.$$ %\end{equation}
 The correspondence  $A\mapsto (A,\id_A)$ (for $A\in \obj \bu$) fully embeds $\bu$ into $\kar(\bu)$.
 Moreover, $\kar(\bu)$ is {\it Karoubian}, i.e.,   any idempotent morphism gives a direct sum decomposition in 
 $\kar(\bu)$. %Equivalently, $\bu$ is Karoubian if (and only if) the canonical embedding $\bu \to \kar (\bu)$ is an equivalence of categories. Recall also that $\kar(\bu)$ is triangulated if $\bu$ is (see  Theorem 1.5 of \cite{bashli}).

\item The symbol $\cu$ below will always denote some triangulated category; usually it will be endowed with a weight structure $w$. The symbols $\cu'$ and $\du$ will  also be used  for triangulated categories only.

\item For any  $A,B,C \in \obj\cu$ we will say that $C$ is an {\it extension} of $B$ by $A$ if there exists a distinguished triangle $A \to C \to B \to A[1]$.

%item Given a class $D$ of objects of $\cu$ we will write $\lan D\ra$ for the smallest  full retraction-closed triangulated subcategory of $\cu$ containing $D$. We will  call  $\lan D\ra$  the triangulated category {\it densely generated} by $D$. %strictly full triangulated subcategory of $\cu$ such that $D\subset  \obj \du$. We will  call  $\lan D\ra$  the triangulated category {\it strongly generated} by $D$. 
 %Certainly, this definition can be applied in the case $\du=\cu$.

\item For $X,Y\in \obj \cu$ we will write $X\perp Y$ if $\cu(X,Y)=\ns$. For
$D,E\subset \obj \cu$ we write $D\perp E$ if $X\perp Y$ for all $X\in D,\
Y\in E$.

%Given $D\subset\obj \cu$
Moreover,  we  will write $D^\perp$ for the class
$\{Y\in \obj \cu:\ X\perp Y\ \forall X\in D\}$;
%Sometimes we will denote by $D^\perp$ the corresponding full subcategory of $\cu$. 
dually, ${}^\perp{}D$ is the class
$\{Y\in \obj \cu:\ Y\perp X\ \forall X\in D\}$.

%\item Given $f\in\cu (X,Y)$, where $X,Y\in\obj\cu$, we will call the third verte of (any) distinguished triangle $X\stackrel{f}{\to}Y\to Z$ a {\it cone} of $f$.\footnote{Recall %{del}
%that different choices of cones are connected by non-unique isomorphisms.}\

\item %For an additive category $\bu$ 
We will write $K(\bu)$ for the homotopy category of (cohomological) complexes over $\bu$. Its full subcategory of
bounded complexes will be denoted by $K^b(\bu)$. We will write $M=(M^i)$ if $M^i$ are the terms of the complex $M$.
%???; $f^i$ will denote the $i$th component of a morphism of complexes $f$. 
%???? If we will say that an arrow (or a sequence of arrows) in $\au$ yields an object of $K^b(\bu)$, we will mean by default  that the last object of this sequence is in degree $0$. We will always extend a ``finite'' $\bu$-complex by $0$'s to $\pm \infty$	(to obtain an object of $K^b(\bu)\subset K(\bu)$).

	\item We will say that an additive covariant (resp. contravariant) functor from $\cu$ into $\au$ is {\it homological} (resp. {\it cohomological}) if it converts distinguished triangles into long exact sequences.
	
	For a (co)homological functor $H$ and $i\in\z$ we will write $H_i$ (resp. $H^i$) for the composition $H\circ [-i]$. % (resp. $H\circ [i]$).
%cohomological functors; compact objects???!
\end{itemize}

\subsection{Weight structures: basics}\label{ssws}

Let us recall some basic  definitions of the theory of weight structures. %  notion that is central for this paper.

\begin{defi}\label{dwstr}

I. A pair of subclasses $\cu_{w\le 0},\cu_{w\ge 0}\subset\obj \cu$ %(of {\it $w$-negative} and {\it $w$-positive} objects, respectively)
will be said to define a weight
structure $w$ on a triangulated category  $\cu$ if 
they  satisfy the following conditions.

(i) $\cu_{w\ge 0}$ and $\cu_{w\le 0}$ are %additive and 
retraction-closed in $\cu$ (i.e., contain all $\cu$-retracts of their objects).

(ii) {\bf Semi-invariance with respect to translations.}

$\cu_{w\le 0}\subset \cu_{w\le 0}[1]$, $\cu_{w\ge 0}[1]\subset
\cu_{w\ge 0}$.

(iii) {\bf Orthogonality.}

$\cu_{w\le 0}\perp \cu_{w\ge 0}[1]$.

(iv) {\bf Weight decompositions}.

 For any $M\in\obj \cu$ there
exists a distinguished triangle
$$LM\to M\to RM {\to} LM[1]$$
such that $LM\in \cu_{w\le 0} $ and $ RM\in \cu_{w\ge 0}[1]$.

Moreover, if $\cu$ is endowed with a weight structure then we will say that $\cu$ is a {\it weighted} (triangulated) category.
\end{defi}

We will also need the following definitions.

\begin{defi}\label{dwso}
Let $i,j\in \z$; assume that a triangulated category $\cu$ is endowed with a weight structure $w$.

\begin{enumerate}
\item\label{idh}
The full  subcategory $\hw$ of $ \cu$ whose objects are
$\cu_{w=0}=\cu_{w\ge 0}\cap \cu_{w\le 0}$ %and morphisms are $\hw(Z,T)=\cu(Z,T)$ for $Z,T\in \cu_{w=0}$,
 is called the {\it heart} of %the weight structure
$w$.

\item\label{id=i}
 $\cu_{w\ge i}$ (resp. $\cu_{w\le i}$, resp. $\cu_{w= i}$) will denote the class $\cu_{w\ge 0}[i]$ (resp. $\cu_{w\le 0}[i]$, resp. $\cu_{w= 0}[i]$).

\item\label{id[ij]}
$\cu_{[i,j]}$  denotes $\cu_{w\ge i}\cap \cu_{w\le j}$; so, this class  equals $\ns$ if $i>j$. 

%$\cu^b\subset \cu$ will be the category whose object class is $\cup_{i,j\in \z}\cu_{[i,j]}$; we will say that its objects are the {$w$-bounded} objects of $\cu$.

%\item\label{idbo} We will  say that $(\cu,w)$ is {\it  bounded}  if $\cu^b=\cu$ (i.e., if $\cup_{i\in \z} \cu_{w\le i}=\obj \cu=\cup_{i\in \z} \cu_{w\ge i}$).

\item\label{idwkar}
 We will say that %a triangulated category $\cu$ endowed with a weight structure $w'$ 
 $\cu$ (or $(\cu,w)$) is {\it weight-Karoubian} if $\hw$ is Karoubian. 

\item\label{idwe} %nafig?? later??!!
 Let %$\cu$ and 
  $\cu'$ be a triangulated category endowed with  a weight structure $w'$; let $F:\cu\to \cu'$ be an exact functor.

Then $F$ is said to be  {\it  weight-exact} (with respect to $w,w'$) if it maps $\cu_{w\le 0}$ into $\cu'_{w'\le 0}$ and
sends $\cu_{w\ge 0}$ into $\cu'_{w'\ge 0}$. 

\item\label{idrest}
Let $\du$ be a full triangulated subcategory of $\cu$.

We will say that $w$ {\it restricts} to $\du$ whenever the couple $(\cu_{w\le 0}\cap \obj \du,\ \cu_{w\ge 0}\cap \obj \du)$ is a weight structure on $\du$.

\item\label{ilrd} We will say that $M$ is left (resp., right) {\it $w$-degenerate} (or {\it weight-degenerate} if the choice of $w$ is clear) if $M$ belongs to $ \cap_{i\in \z}\cu_{w\ge i}$ (resp.    to $\cap_{i\in \z}\cu_{w\le i}$).

\item\label{iwnlrd} We will say that $w$ is left (resp., right) {\it non-degenerate} if all left (resp. right) weight-degenerate objects are zero.

We will say that $w$ is just {\it non-degenerate} if it is both left and right non-degenerate. %Define wd objects?!

\item\label{idbob} We will call $\cup_{i\in \z} \cu_{w\ge i}$ (resp. $\cup_{i\in \z} \cu_{w\le i}$) the class of {\it $w$-bounded below} (resp., {\it $w$-bounded above}) objects of $\cu$.
\end{enumerate}
\end{defi}

\begin{rema}\label{rstws}

1. A  simple (and still  useful) example of a weight structure comes from the stupid filtration on the homotopy category of cohomological complexes $K(\bu)$ for an arbitrary additive  $\bu$ (it can also be restricted to bounded complexes; see Definition \ref{dwso}(\ref{idrest})). In this case $K(\bu)_{\wstu\le 0}$ (resp. $K(\bu)_{\wstu\ge 0}$) is the class of complexes that are homotopy equivalent to complexes  concentrated in degrees $\ge 0$ (resp. $\le 0$); see Remark 1.2.3(1) of \cite{bonspkar} for more detail. %This notation will be 
We will use this notation below. 

 The heart of this weight structure $\wstu$ is the retraction-closure  of $\bu$  in  $K(\bu)$; hence it is equivalent to $\kar(\bu)$.  %(or in $K(\bu)$, respectively). 

2. A weight decomposition (of any $M\in \obj\cu$) is almost never canonical. 

Still for any $m\in \z$ the axiom (iv) gives the existence of a distinguished triangle \begin{equation}\label{ewd} w_{\le m}M\to M\to w_{\ge m+1}M\to (w_{\le m}M)[1] \end{equation}  with some $ w_{\ge m+1}M\in \cu_{w\ge m+1}$ and $ w_{\le m}M\in \cu_{w\le m}$; we will call it an {\it $m$-weight decomposition} of $M$.

 We will often use this notation below (even though $w_{\ge m+1}M$ and $ w_{\le m}M$ are not canonically determined by $M$); we will call any possible choice either of $w_{\ge m+1}M$ or of $ w_{\le m}M$ (for any $m\in \z$) a {\it weight truncation} of $M$. Moreover, when we will write arrows of the type $w_{\le m}M\to M$ or $M\to w_{\ge m+1}M$ we will always assume that they come from some $m$-weight decomposition of $M$.

3. In the current paper we use the ``homological convention'' for weight structures; 
it was previously used in  %\cite{wildshim}, 
\cite{wild}, \cite{wildcons}, \cite{bsnew}, %\cite{hebpo}, %\cite{bgern}, \cite{bkillw}, 
% \cite{brelmot}, \cite{bonivan}, 
\cite{bonspkar}, \cite{binters}, %\cite{bpgws}, 
and in \cite{bokum}, %and in \cite{bgn}, 
whereas in \cite{bws} %, \cite{bger}, 
 %and \cite{bontabu} 
 the ``cohomological convention'' was used. In the latter convention 
the roles of $\cu_{w\le 0}$ and $\cu_{w\ge 0}$ are interchanged, i.e., one
considers   $\cu^{w\le 0}=\cu_{w\ge 0}$ and $\cu^{w\ge 0}=\cu_{w\le 0}$. 
 
 We also recall that D. Pauksztello has introduced weight structures independently (in \cite{konk}); he called them
co-t-structures. %; in his papers (essentially) the cohomological convention is used.
\end{rema}

\begin{pr}\label{pbw}
Let  %$\cu$ be a triangulated category, $n\ge 0$, 
$m\le n\in\z$, $M,M'\in \obj \cu$, $g\in \cu(M,M')$. 

\begin{enumerate}
\item \label{idual}
The axiomatics of weight structures is self-dual, i.e., on $\cu'=\cu^{op}$
(so $\obj\cu'=\obj\cu$) there exists the (opposite)  weight structure $w^{op}$ for which $\cu'_{w'\le 0}=\cu_{w\ge 0}$ and $\cu'_{w'\ge 0}=\cu_{w\le 0}$.

\item\label{iort}
 $\cu_{w\ge 0}=(\cu_{w\le -1})^{\perp}$ and $\cu_{w\le 0}={}^{\perp} \cu_{w\ge 1}$.

\item\label{icoprod} $\cu_{w\le 0}$ is closed with respect to all coproducts that exist in $\cu$.
%nafig?!

\item\label{iext} %??!
 $\cu_{w\le 0}$, $\cu_{w\ge 0}$, and $\cu_{w=0}$ are additive. % and extension-closed. 

%\item\label{igenlm} The class $\cu_{[m,n]}$ is the %smallest %Karoubi-closed extension-stable subclass of $\obj\cu$ containing 
%extension-closure of $\cup_{m\le j\le n}\cu_{w=j}$.

%\item\label{ibond} If $M$ is bounded above (resp. below) and also left (resp. right) $w$-degenerate then it is zero. 

\item\label{icompl} %Let $ m\le l\in \z$, $M,M'\in \obj \cu$; fix certain weight decompositions
        %of $M[-m]$ and $M'[-l]$. Then  
				For any (fixed) $m$-weight decomposition of $M$ and an $n$-weight decomposition of $M'$  (see Remark \ref{rstws}(2))
				%any morphism
%one can extend
 $g$ can be extended %$g\in \cu(M, M')$ can be
to a %commutative diagram 
morphism of the corresponding distinguished triangles:
 \begin{equation}\label{ecompl} \begin{CD} w_{\le m} M@>{c}>>
M@>{}>> w_{\ge m+1}M\\
@VV{h}V@VV{g}V@ VV{j}V \\
w_{\le n} M'@>{}>>
M'@>{}>> w_{\ge n+1}M' \end{CD}
\end{equation}

Moreover, if $m<n$ then this extension is unique (provided that the rows are fixed).

%\item\label{itrun} Assume that $l>m$. Then for any  $l$-weight decomposition of $M$ and an $m$-weight decomposition of $w_{\le l}M$ the corresponding composed morphism $w_{\le m}(w_{\le l}M)\to w_{\le l}M\to M$ gives an $m$-weight decomposition of $M$.

 \item\label{iwd0} %For any weight decomposition of an
 If $M\in \cu_{w\ge m}$ %and $m\ge 0$ %(see (\ref{wd})) 
 %we have (see Remark \ref{rstws}(3)) 
 then $w_{\le n}M\in \cu_{[m,n]}$ (for any $n$-weight decomposition of $M$). %dualize?! Both?!
%Nado?!
Dually, if  $M\in \cu_{w\le n}$ 
 then $w_{\ge m}M\in \cu_{[m,n]}$.

\item\label{ifact} Assume %$M\in \cu_{w\le 0}$, $N\in  \obj \cu$, $g\in \cu(M,N)$. Then $g$ factors through $w_{\le 0}N$.
%$M\in \obj  \cu$,
 $M'\in   \cu_{w\ge m}$. Then %any $g\in \cu(M,M')$ 
 $g$ factors through $w_{\ge m}M$ (for any choice of the latter object).

Dually, if $M\in   \cu_{w\le m}$ then %any $g\in \cu(M,M')$ 
 $g$ factors through $w_{\le m}M'$.

\item\label{ikwkar} If $\cu$ is Karoubian then %$\cu$ is 
 it is also weight-Karoubian.

\item\label{icompidemp} %In the notation of the previous assertion assume that 
Assume that we are given a diagram of the form (\ref{ecompl}) and its rows are equal (so,
$M'=M$, $m=n$,  $w_{\le m} M= w_{\le n} M'$); also suppose that $g=\id_M$, $h$ is an idempotent endomorphism, and $(\cu,w)$ is weight-Karoubian. % whereas $\cu$ is Karoubian. 

Then there exists %an isomorphism  %for the 
 a decomposition $w_{\le m} M\cong M_1\bigoplus M_0$ %such that 
corresponding to $h$ (i.e., $h$ equals the projection of  $w_{\le m} M$ onto $M_1$). Moreover, %$\id_{M_1}$, 
we have $M_0\in \cu_{w=m}$, and the upper row of (\ref{ecompl}) 
 can be presented as the direct sum of (the corresponding two arrows in) a certain $m$-weight decomposition $M_1\to M\to M_2$ %and of %the distinguished triangle 
  with $(M_0\to 0 \to M_0[1])$.
\end{enumerate}
\end{pr}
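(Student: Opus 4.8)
The plan is to extract the idempotent $h$ acting on $w_{\le m}M$, to show that its complementary summand (the part killed by $c$) lies in the heart, and then to split off the corresponding contractible triangle from the upper row of (\ref{ecompl}). First I would isolate the key relation: since $g=\id_M$ and the two rows of (\ref{ecompl}) coincide, commutativity of the left-hand square reads $c=c\circ h$, i.e. $c\circ(1-h)=0$ on $N:=w_{\le m}M\in\cu_{w\le m}$. Rotating the upper row yields a distinguished triangle $w_{\ge m+1}M[-1]\xrightarrow{d}N\xrightarrow{c}M$, and applying $\cu(N,-)$ shows that $c\circ(1-h)=0$ forces a factorization $1-h=d\circ\phi$ with $\phi\colon N\to w_{\ge m+1}M[-1]$. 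Now $w_{\ge m+1}M[-1]\in\cu_{w\ge m}$, so by the dual of part (\ref{ifact}) the map $\phi$ factors through $w_{\le m}(w_{\ge m+1}M[-1])$, which by part (\ref{iwd0}) lies in $\cu_{[m,m]}=\cu_{w=m}$. Composing these, I obtain a factorization $1-h=\beta\circ\alpha$ through some $H\in\cu_{w=m}$.

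Next I would split the idempotent $f:=1-h$ using the weight-Karoubian hypothesis. Since $H\in\cu_{w=m}$ and $\hw$ is Karoubian, idempotents of $H$ split; a routine computation shows that $(\alpha\beta)^2$ is idempotent on $H$, and splitting it as $iq$ (with $qi=\id$) while setting $r_0:=q\alpha$ and $s_0:=\beta i$ yields $s_0r_0=f$ and $r_0s_0=\id_{M_0}$ for some $M_0\in\cu_{w=m}$. Completing $s_0$ to a distinguished triangle $M_0\xrightarrow{s_0}N\xrightarrow{u}M_1\xrightarrow{v}M_0[1]$, I would prove $v=0$ by applying the cohomological functor $\cu(-,M_0)$: the retraction $r_0$ makes $s_0^{*}$ surjective, and the resulting long exact sequence forces $v=0$. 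Hence the triangle splits, $N=w_{\le m}M\cong M_0\oplus M_1$, the idempotent $f=s_0r_0$ becomes the projection onto $M_0$ so that $h$ is the projection onto $M_1$, and $M_1\in\cu_{w\le m}$ as a retract of $N$ (part (\ref{iext})). This gives the first two assertions, with $M_0\in\cu_{w=m}$.

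Finally I would split the upper row. From $f s_0=s_0$ and $c\circ f=0$ one gets $c\circ s_0=0$, so under $N\cong M_0\oplus M_1$ the map $c$ equals $[\,c_1\ 0\,]$ with $c_1=c|_{M_1}$. Completing $c_1\colon M_1\to M$ to a triangle $M_1\xrightarrow{c_1}M\to M_2\to M_1[1]$ and taking its direct sum with the contractible triangle $M_0\to 0\to M_0[1]$, I obtain a distinguished triangle built on the very same morphism $[\,c_1\ 0\,]\colon M_0\oplus M_1\to M$ as the upper row. Since two distinguished triangles sharing their first morphism are isomorphic via an isomorphism that is the identity on the first two vertices, the upper row is identified with this direct sum. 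Comparing third vertices gives $w_{\ge m+1}M\cong M_2\oplus M_0[1]$; as $M_0[1]\in\cu_{w=m+1}\subset\cu_{w\ge m+1}$, the retract $M_2$ lies in $\cu_{w\ge m+1}$ (part (\ref{iext})), so $M_1\to M\to M_2$ is a genuine $m$-weight decomposition, proving the last assertion.

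The main obstacle is the passage to the heart: combining the vanishing $c\circ(1-h)=0$ with parts (\ref{ifact}) and (\ref{iwd0}) to factor $1-h$ through an object of $\cu_{w=m}$, and then invoking the lemma that an idempotent factoring through an object with split idempotents must itself split. Once the summand $M_0\in\cu_{w=m}$ is produced, the vanishing $v=0$ and the identification of the two triangles are standard triangulated manipulations.
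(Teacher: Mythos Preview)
Your argument for part~(\ref{icompidemp}) is correct and rests on the same key observation as the paper's: the complementary idempotent $1-h$ factors through an object of $\cu_{w=m}$, and the Karoubian hypothesis on $\hw$ then forces a splitting. The execution, however, is different. The paper first passes to the Karoubi envelope $\kar(\cu)$, splits $h$ there to obtain $w_{\le m}M\cong M_1\oplus M_0$ and the decomposition of the triangle in $\kar(\cu)$, and only then shows that $M_0$ is a retract of $w_{\ge m}(w_{\le m}M)\in\cu_{w=m}$, hence lies in $\cu$ (so that $M_1,M_2\in\obj\cu$ as well). You instead stay inside $\cu$ throughout: you factor $1-h$ through $w_{\le m}(w_{\ge m+1}M[-1])\in\cu_{w=m}$ via parts~(\ref{ifact}) and~(\ref{iwd0}), and then invoke the elementary lemma that an idempotent factoring through an object whose idempotents split must itself split. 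This buys you independence from the (nontrivial) fact that $\kar(\cu)$ is triangulated, at the modest cost of spelling out the splitting lemma. Note also that the paper uses the ``other'' heart object $w_{\ge m}(w_{\le m}M)$ rather than your $w_{\le m}(w_{\ge m+1}M[-1])$; these are dual choices and either works.

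Two minor remarks. First, your verification that $r_0s_0=\id_{M_0}$ deserves a line: from $(\alpha\beta)p=p=iq$ one gets $q(\alpha\beta)=q$ after left-multiplying by $q$, whence $q(\alpha\beta)i=qi=\id$. Second, like the paper you treat only part~(\ref{icompidemp}); parts~(\ref{idual})--(\ref{ikwkar}) are standard and cited there, so this is appropriate.
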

\begin{proof}
%Just cite {bdetectw}?????!!!!!! Delete something?!
Assertions \ref{idual}--\ref{iwd0} %\ref%{icompl} % , \ref{icoprod}, \ref{iort}, \ref{iext}, \ref{igenlm}, %\ref{ibond}, %%\ref{iextcub}, \ref{icompl},   and \ref{iwd0}, %duality needed?!
%and \ref{iwe}
 were proved  in \cite{bws} (cf.  Remark 1.2.3(4) of \cite{bonspkar} and pay attention to Remark \ref{rstws}(3) above!),
whereas (the easy) assertion \ref{ifact} %and \ref{iwdmod} are 
 is given by Proposition 1.2.4(8) of \cite{bwcp}.
 %; see Remark 1.1.2(1), Proposition 1.3.3(3,6),  Proposition 1.3.6(1,2), Corollary 1.5.7, Proposition 1.5.6(2?!!!!),  Theorem 4.3.2(II),  and Proposition 5.2.2 of ibid., respectively.
%Proposition 1.5.6
%The remaining assertions also easily follow from the results of ibid. 
%??!

Assertion \ref{ikwkar} is obvious.

To prove assertion \ref{icompidemp} we take  a triangulated category $\cu'$ that is equivalent to $\kar(\cu)$ and contains $\cu$ as a full strict subcategory.  
Consider the decomposition $w_{\le m}M\cong M_1\bigoplus M_0$ corresponding to $h$ in %the triangulated category $\kar(\cu)$.  
 $\cu'$.  Since the diagram (\ref{ecompl}) is commutative, $c=c\circ h$; thus $c$ factors through $M_1$. 
Hence the distinguished triangle coming from the upper row of (\ref{ecompl}) can be decomposed into the direct sum of  the $\cu'$-distinguished triangle $M_0\to 0 \to M_0[1]$ with a %certain triangle
  $\cu'$-distinguished triangle  $M_1\to M\to M_2\to M_1[1]$. Thus $M_0$ is a retract of $w_{\ge m+1}M[-1]$ as well. Hence the morphism $\id_{M_0}$ factors through some morphism  $a: w_{\le m} M\to w_{\ge m+1}M[-1]$. Next, assertion \ref{ifact} implies that $a$ factors through $N=w_{\ge m}(w_{\le m} M)$; thus $M_0$ is a retract of $N$.  Now, $N$ belong to $\cu_{w=m}$ by assertion \ref{iwd0}. % of the proposition. 
 Since $\hw$ is Karoubian, we obtain that $M_0$ belongs to $\cu_{w=m}\subset \obj \cu$. If follows that $M_1$ and $M_2$ are objects of $\cu$ as well. Applying axiom (i) of Definition \ref{dwstr} we obtain that $M_1\in \cu_{w \le m}$ and $M_2\in \cu_{w \ge m+1}$; hence $M_1\to M\to M_2\to M_1[1]$ is an $m$-weight decomposition of $M$ by definition.
\end{proof}

\begin{rema}\label{rcompl}
Diagrams of the form (\ref{ecompl}) (also in the case $l<m$) are %very important 
 crucial for this paper. For any diagram of this sort we will say that the morphisms $h$ and $j$ are {\it $w$-truncations} of $g$.  

1. An important %particular case 
type of these diagrams is the one with $g=\id_M$ (for $M'=M$; cf. part \ref{icompidemp} of the proposition). Note that for $m<n$ the corresponding %connecting morphisms in  (\ref{ecompl}) 
 $w$-truncations of $\id_M$ are  unique (provided that the rows are fixed); if $m=n$ then we obtain a certain  (non-unique) "modification" of  an $m$-weight decomposition diagram.

%Another important observation (closely related to Proposition \ref{pbw}(\ref{iwd0})) is that in the case $m<l$ we have $\co(h)\in \cu_{[l+1,m]}$; see ???. %{bger}??

2. One  can "compose"  diagrams of the form (\ref{ecompl}), i.e., for any $q\in \cu(M',M'')$, $k\in \z$, and a morphism of triangles  of the form  
$$\begin{CD} w_{\le n} M'@>{}>>
M'@>{}>> w_{\ge n+1}M'\\
@VV{}V@VV{q}V@ VV{}V \\
w_{\le k} M''@>{}>>
M''@>{}>> w_{\ge k+1}M'' \end{CD}$$
one can compose its vertical arrows with the ones of (\ref{ecompl}) to obtain %a diagram 
a morphism of distinguished triangles 
$$ \begin{CD} w_{\le m} M@>{c}>>
M@>{}>> w_{\ge m+1}M\\
@VV{}V@VV{q\circ g}V@ VV{}V \\
w_{\le k} M''@>{}>>
M''@>{}>> w_{\ge k+1}M'' \end{CD}$$ 
Note that one does not have to assume $k\ge n$ here (and $n\ge m$ also is not necessary provided that the existence of  (\ref{ecompl}) is known in this case). Thus one may say that $w$-truncations of morphisms can be composed.

%Anyway,
 Moreover, if $k>m$ then %we obtain 
the composed diagram obtained this way is the only possible %diagram with these rows. 
morphism of triangles compatible with $q\circ g$.
%These observations surely can be applied if $g$ or $h$ equals $\id_{M'}$; in the case $m=l$ (resp. $l=k$) this corresponds to "modifying the $l$-decomposition of $M'$."

%???3. Note also that the diagram (\ref{ecompl}) can certainly be recovered from its left or right hand square.
\end{rema}

\subsection{On %weight Postnikov towers and 
weight complexes and weak homotopy equivalences}\label{sswc}

To define the weight complex functor we will need the following definition for complexes. Below  $\bu$ will always denote an additive category.

\begin{defi}\label{dbacksim}
Let $M$ and $N$ be objects of $K(\bu)$, and  $m_1,m_2\in C(\bu)(M,N)$.

1.  We %will 
write $m_1\backsim m_2$ if $m_1-m_2=d_Nx+yd_M$ for some collections of arrows
 $x^*,y^*\in \bu(M^*, N^{*-1})$, where $d_M$ and $d_N$ are the corresponding differentials.
We  call this relation the {\it weak homotopy equivalence} one. %relation. % (yet cf. Theorem 2.1 of \cite{barrabs}). %{bwcp}?!  %Remark \ref{rwc}(\ref{irwc2} below).%\footnote{This relation was earlier introduced in \cite{barrabs}; $m_1$ is {\it absolutely homologous} to $m_2$ in the terminology of that paper. Respectively, some of the results below concerning this equivalence relation were proved in ibid.}\ 

2. %More generally, 
Assume  $k\le l\in (\{-\infty\}\cup \z\cup \{+\infty\})$; also,  $k\in \z$ if $k=l$. 

Then we %will
  write $m_1\backsim_{[k,l]}m_2$ if $m_1-m_2$ is weakly homotopic to  $ m_0\in C(M,N)$ such that  $m_0^i=0$ for $k\le i \le l$ (and $i\in \z$).
\end{defi}

We need the following properties of these equivalence relations.

\begin{lem}\label{lwwh}
%In addition to the notation introduced above assume that $\bu$ is an additive category.
%Let $m_1,m_2\in C(\bu)(M,N)$ for some $\bu$-complexes $M,N$ (i.e., $m_1$ and $m_2$ are morphisms of complexes); $k\le l\in  (\{-\infty\}\cup \z\cup \{+\infty\})$.
Adopt the notation of  Definition \ref{dbacksim}.

\begin{enumerate}
\item\label{iwhecat}
%For any additive $\bu$ 
Factoring morphisms in $K(\bu)$ by the weak homotopy relation yields an additive category $\kw(\bu)$. Moreover, the corresponding full functor $K(\bu)\to \kw(\bu)$ is (additive and) conservative.

\item\label{iwhefu}
Let $\ca:\bu\to \au$ be an additive functor, where $\au$ is any abelian category, and assume that %. Then for any $N,N'\in \obj K(\bu)$ any pair of weakly homotopic morphisms 
$m_1$ is weakly homotopy equivalent to $m_2$. Then $m_1$ and $m_2$  induce equal morphisms of the homology $H_*(\ca(M^i))\to H_*(\ca(N^i))$.

Hence the correspondence $N\mapsto H_0(\ca(N^i))$ gives a well-defined functor $\kw(\bu)\to \au$.

\item\label{iwhefun} Applying an additive functor  $F:\bu\to \bu'$ to complexes termwisely one obtains an additive functor $\kw(F):\kw(\bu)\to \kw(\bu')$.   

\item\label{irwc3}  $m_1\backsim_{[k,l]}m_2$ if and only if $m_1\backsim_{[i,i]}m_2$ for any $i\in \z$ such that  $k\le i \le l$.

 Moreover, $m_1$ is weakly homotopy equivalent to $m_2$ if and only if $m_1\backsim_{[-\infty,+\infty]}m_2$. % for all 

\item\label{irwc4} If $k\in \z$ then $m_1\backsim_{[k,k]}0$ if and only if there exists $m_0\in C(\bu)(M,N)$ such that $m_1=m_0$ in  $K(\bu)(M,N)$ %(i.e., $m_1$ and $m_3$ are homotopy equivalent) 
  and  $m_0^k=0$. 

\item\label{irwc6}  $M$ belongs to $ K(\bu)_{\wstu\ge 0}$ %(for a $\bu$-complex $M$) 
 if and only if $\id_M \backsim_{[1,+\infty]}0_M$,   and $M\in K(\bu)_{\wstu\le 0}$ if and only if  $\id_M \backsim_{[-\infty,-1]}0_M$.
%\item\label{irwcwd} Assume that $m_1\backsim_{[k,l]}0$ and a $C(\bu)$-morphism $m$ becomes isomorphic to $m_1$ in the category $\kw(\bu)$. Then $m\backsim_{[k,l]}0$ as well.
%\item\label{ilwc0}  $A$ belongs  to $K(\bu)_{\wstu= 0}$ if and only if $A$ is a retract of $A^0$.  
\end{enumerate}
\end{lem}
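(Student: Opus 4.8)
The plan is to treat the six assertions in order, using the earlier ones in the later. For \ref{iwhecat} I would first check that $\backsim$ is a congruence on the additive category $K(\bu)$: it is evidently an additive equivalence relation, and if $m_1-m_2=d_Nx+yd_M$ then, since any chain maps $p,q$ commute with the differentials, $pm_1q-pm_2q$ is again of the form $d\,x'+y'\,d$. Thus the weakly-null morphisms contain the null-homotopic ones and form a two-sided ideal, so $\kw(\bu)$ is additive with full additive projection. The real content is conservativity, which I would reduce to the object-level statement that $\id_X\backsim 0$ implies $\id_X$ null-homotopic: writing $\id_X=d_Xx+yd_X$ and multiplying by $d_X$ on either side gives $d_X=d_Xxd_X$ and $d_X=d_Xyd_X$, after which a direct check shows $h=x+y-xd_Xy$ satisfies $d_Xh+hd_X=\id_X$. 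To pass from objects to morphisms, given $f$ invertible in $\kw(\bu)$ with weak inverse $g$, I would exhibit an explicit weak contraction of $\co(f)$ assembled in block form from $g$ and the degree~$-1$ maps witnessing $gf\backsim\id$ and $fg\backsim\id$ (the usual contraction of the cone of a homotopy equivalence, with each single homotopy replaced by a pair); by the object-level statement $\co(f)$ is then genuinely contractible, hence $f$ is an isomorphism in $K(\bu)$.

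Assertions \ref{iwhefu} and \ref{iwhefun} are routine. For \ref{iwhefu}, if $m_1-m_2=d_Nx+yd_M$ then on a cycle $z\in\ker\ca(d_M)$ the summand $\ca(yd_M)$ kills $z$ while $\ca(d_Nx)(z)$ is a boundary, so $m_1,m_2$ induce the same map on $H_*(\ca(M^\bullet))$; functoriality of $N\mapsto H_0(\ca(N^\bullet))$ on $\kw(\bu)$ is inherited from its evident functoriality on $C(\bu)$. For \ref{iwhefun}, applying $F$ termwise sends $d_Nx+yd_M$ to $d_{FN}F(x)+F(y)d_{FM}$, so $\kw(F)$ is well defined.

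The key observation for \ref{irwc3} is that $m_1\backsim_{[i,i]}m_2$ depends only on the degree-$i$ component: it holds iff $(m_1-m_2)^i=d_N^{i-1}x^i+y^{i+1}d_M^i$ for some arrows $x^i,y^{i+1}$, all other degrees being absorbed into $m_0$. Since the degree-$i$ condition involves only $x^i$ and $y^{i+1}$, the conditions for distinct $i$ use disjoint components of $x$ and $y$; hence solvability for each individual $i\in[k,l]\cap\z$ assembles into one global pair $(x,y)$ witnessing $m_1\backsim_{[k,l]}m_2$. The ``moreover'' clause is immediate, as $\backsim_{[-\infty,+\infty]}$ forces $m_0=0$. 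For \ref{irwc4}, given $(m_1)^k=d_N^{k-1}x^k+y^{k+1}d_M^k$ I would set $h^k=x^k$, $h^{k+1}=y^{k+1}$ and $h^j=0$ otherwise; then $m_0=m_1-(d_Nh+hd_M)$ is an honest chain map homotopic to $m_1$ with $m_0^k=0$, and the converse holds since a genuine homotopy is a weak homotopy with $x=y=h$.

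Finally, for \ref{irwc6} I treat the first equivalence (the second being dual). The direction $\Leftarrow$ is soft: if $M$ is homotopy equivalent to $M'$ concentrated in degrees $\le0$, then $\id_{M'}$ already has vanishing components in degrees $\ge1$, so $\id_{M'}\backsim_{[1,+\infty]}0$, and transporting along the homotopy equivalence using composition-compatibility and transitivity of $\backsim_{[1,+\infty]}$ gives $\id_M\backsim_{[1,+\infty]}0$. The substantive direction is $\Rightarrow$: by \ref{irwc3} the hypothesis unwinds to $\id_{M^i}=d_M^{i-1}x^i+y^{i+1}d_M^i$ for all $i\ge1$. For $M$ bounded above I would induct on the top nonzero degree $n$, where $d_M^n=0$ forces $\id_{M^n}=d_M^{n-1}x^n$, exhibiting $M^n$ as a direct summand of $M^{n-1}$ and permitting Gaussian elimination of a contractible two-term summand $M^n\xrightarrow{\id}M^n$; this yields a homotopy-equivalent complex of strictly smaller top degree, still satisfying the hypothesis by the homotopy-invariance established in $\Leftarrow$, and iteration lands in degrees $\le0$. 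For unbounded-above $M$ the same cancellations must be carried out compatibly across all degrees at once (the degree-$i$ relation couples $M^i$ only to $M^{i\pm1}$), and I expect this—together with the cone construction in \ref{iwhecat}—to be the main obstacle; I would resolve it either by organizing the elimination into a single homotopy equivalence onto an explicitly modified degrees-$\le0$ complex, or by showing the relevant cone is weakly contractible and invoking \ref{iwhecat}.
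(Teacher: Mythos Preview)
The paper does not actually prove this lemma; it simply cites Proposition~B.2 of \cite{bwcp} (and \S3.1 of \cite{bws}), so there is no in-paper argument to compare against. Your write-up is therefore doing independent work, and most of it is sound. Two points deserve comment.

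\textbf{Assertion \ref{irwc3} (assembly).} Your ``disjoint components'' argument is essentially right but glosses over one thing: after you set $x^i=a_i$ and $y^{i+1}=b_i$ for $i\in[k,l]$, you still need $m_0=(m_1-m_2)-(d_Nx+yd_M)$ to be a \emph{chain map}. In the interior $k\le j\le l-1$ this is automatic, since applying $d_N^j$ to the degree-$j$ equation and right-composing the degree-$(j{+}1)$ equation with $d_M^j$ both give $m_1^{j+1}d_M^j$, forcing $d_N^j(x^{j+1}-y^{j+1})d_M^j=0$. At the boundaries one must additionally choose the free components $y^k:=x^k$ and $x^{l+1}:=y^{l+1}$; then the chain condition holds everywhere. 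So the idea is correct, but the verification is not quite as trivial as ``disjoint components'' suggests.

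\textbf{Assertion \ref{irwc6} (the substantive direction).} Here there is a genuine gap. Your inductive Gaussian elimination requires that the idempotent $x^n d_M^{n-1}$ on $M^{n-1}$ split, which is not available in an arbitrary additive $\bu$; and you leave the unbounded-above case open. There is a much shorter route that avoids both problems and uses what you have already proved. Work directly from the definition: $\id_M\backsim_{[1,+\infty]}0$ gives a chain map $m_0\colon M\to M$ with $m_0^i=0$ for $i\ge 1$ and $m_0\backsim\id_M$. By your conservativity in \ref{iwhecat}, $m_0$ is an isomorphism in $K(\bu)$. Since $d_M^0 m_0^0 = m_0^1 d_M^0 = 0$, the map $m_0$ factors as $M\xrightarrow{p}\sigma_{\le 0}M\xrightarrow{\iota}M$, where $p$ is the obvious projection and $\iota^i=m_0^i$ for $i\le0$. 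Thus $M$ is a $K(\bu)$-retract of $\sigma_{\le 0}M$, and since $K(\bu)_{\wstu\ge 0}$ is retraction-closed, $M\in K(\bu)_{\wstu\ge 0}$. (Incidentally, your labels $\Leftarrow/\Rightarrow$ in this part are swapped, though the mathematics you attach to each is clear.)
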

\begin{proof}
Assertion \ref{iwhefun} is obvious, and the  remaining %assertions
 ones  are contained in Proposition B.2  of  \cite{bwcp}; cf. also \cite[\S3.1]{bws} for assertions \ref{iwhecat} and \ref{iwhefu}
\end{proof}

Now let us discuss the approach to weight complexes that we will use in the current paper.

\begin{rema}\label{rwc}
\begin{enumerate}
\item\label{irwch} %Similarly to \S3 of \cite{bws}, 
In the current paper we will use a certain additive weight complex functor $t:\cu\to \kw(\hw)$ (for any triangulated category $\cu$ endowed with a weight structure $w$). However, to define a canonical functor of this sort one has to replace $\cu$ by a certain equivalent (triangulated) category $\cuw$; see \S1.3 of \cite{bwcp} where this theory is exposed carefully (in contrast to \S3 of \cite{bws}; cf. %Appendix A.2
 Remark A.2.1(3) of \cite{bwcp}). Thus to define $t$ one should compose the (additive) "canonical weight complex functor" $t_{can}:\cuw\to \kw(\hw)$ with a splitting of the canonical equivalence $\cuw\to \cu$ (see Proposition 1.3.4 of \cite{bwcp}). Clearly (cf. Remark 5.3  of \cite{schnur}), any two splittings of this sort are isomorphic, and it is sufficient for our purposes to assume that one of them is chosen and so $t$ is fixed.

\item\label{irwcob} Moreover, we have no need to describe  weight complexes of all %objects and 
morphisms in $\cu$ explicitly; we prefer to list a collection of properties of $t$ instead.
So, we only sketch the description of $t(M)$ for $M\in \obj \cu$; the details can be found in loc. cit.

We choose (arbitrary) weight truncations $w_{\le n}M$ for all $n\in \z$, and take $g_{n}:w_{\le n}M\to w_{\le n+1}M$ to be the corresponding $w$-truncations of $\id_M$ (see Remark \ref{rcompl}). Denote $\co(g_n)$ by $M^{-n-1}[n+1]$;  it is easily seen that $M^i\in \cu_{w=0}$ for all $i\in \z$ and that the distinguished triangles coming from $g_n$ connect these objects to form a complex $(M^i)$. 

The problem with defining the functor $t$ is that this complex clearly depends on the choice of the objects $w_{\le n}M$.  However, for any choice of this form we have $(M^i)\cong t(M)$  (in $K(\hw)$; this isomorphism becomes canonical in $\kw(\hw)$). 

These observations easily imply Proposition \ref{pwt}(\ref{iwcons}) below; cf. also Remark \ref{rwchw}.
%(cf. Lemma \ref{lwwh}??!. 

%\item\label{irwcsh} Since %the aforementioned 
%category $\cuw$ is naturally endowed with a shift auto-equivalence, and both the aforementioned  functor $t_{can}:\cuw\to \kw(\hw)$ and the equivalence $\cuw\to \cu$ respect shifts, all the statements mentioning weight complexes can be naturally "shifted". %

%However, we will not apply this statement below. %avoid????!!!

\item\label{irwc7} %In "most of" the known examples  of weight structures 
 It appears that $t$ can "usually" be enhanced to an  exact (strong weight complex) functor $t^{st}:\cu\to K(\hw)$; see Corollary 3.5 of \cite{sosnwc}, \S6.3 of \cite{bws}, and Remark 1.3.5(3) of \cite{bwcp}.

Moreover, the author believes that the reader in (more or less, concrete) examples will not loose much if she assumes that $t^{st}$ exists throughout the paper. 

\item\label{irwc2} The weak homotopy equivalence relation was introduced  in \S3.1 of \cite{bws} independently from the earlier and closely related notion of {\it absolute homology}; cf. Theorem 2.1 of \cite{barrabs}. 

\item\label{irwcgs}
The term "weight complex" comes from \cite{gs}; yet the domain of the weight complex functor in that paper was not triangulated, whereas the target was ("the ordinary") $K^b(\chowe)$.
\end{enumerate}
\end{rema}

So we list the main properties of our weight complex functor $t: \cu\to \kw(\hw)$.

\begin{pr}\label{pwt}
Let $M,M'\in \obj \cu$, $g\in \cu(M,M')$ (where $\cu$ is %endowed with a weight structure $w$
 a weight triangulated category; see Definition \ref{dwstr}), and $h:M'\to \co(g)$ is the second side of a distinguished triangle containing $g$.

Then the following statements are valid.

\begin{enumerate}
\item\label{irwcsh} $t\circ [n]_{\cu}\cong [n]_{\kw(\hw)}\circ t$, where  $[n]_{\kw(\hw)}$ is the obvious shift by $[n]$ (invertible) endofunctor of the category $\kw(\hw)$.

\item\label{iwcex} 
There exists a lift of the $\kw(\hw)$-morphism chain $t(M)\stackrel{t(g)}{\to} t(M') \stackrel{t(h)}{\to} t(\co(g))$ to two sides of a distinguished triangle in $K(\hw)$.

\item\label{iwcons} If $M\in \cu_{w\le n}$ (resp. $M\in \cu_{w\ge n}$) then $t(M)$ belongs to $K(\hw)_{\wstu\le n}$ (resp. to $K(\hw)_{\wstu\ge n}$).

Moreover, if $M$ is left or right $w$-degenerate (see Definition \ref{dwso}(\ref{ilrd})) then $t(M)=0$.

\item\label{iwcfunct} Let $\cu'$ be a triangulated category endowed with a weight structure $w'$; let
 $F:\cu\to \cu'$ be a weight-exact functor. Then the composition $t'\circ F$ is isomorphic to $\kw(\hf)\circ t$, where 
$t'$ is a weight complex functor corresponding to $w'$, and the functor $\kw(\hf):\kw(\hw)\to \kw(\hw')$ is defined as in Lemma \ref{lwwh}(\ref{iwhefun}).

\item\label{iwcalc} %In the notation above
For any morphism of triangles 
\begin{equation}\label{ecalc}%??! =?!
 \begin{CD} w_{\le n-1} M@>{a}>>
w_{\le n} M@>{}>>\co(a)\\
@VV{c}V@VV{d}V@ VV{h}V \\
w_{\le n-1} M'@>{b}>>
w_{\le n}  M'@>{}>> \co(b)\end{CD}\end{equation}
where $a$, $b$, $c$, and $d$ are the corresponding $w$-truncations of $\id_M$, $\id_{M'}$, and $g$ (see Remark \ref{rcompl}), respectively, we have $\co(a),\co(b)\in \cu_{w=n}$, and $t(g)$ is isomorphic (as a $\kw(\hw)$-arrow) to a morphism $x$ whose $-n$th component $x^{-n}\in \mo(\hw)$ %modulo equivalence????!
 equals $h[-n]$. 

%Conversely?! for any $x^{-n}$?????!!!!!
Moreover, if $t(g)$ is isomorphic to a $\kw(\hw)$-morphism $y$ such that $y^{-n}=0$ then %one %can set $h=0$ in 
 %choose
 %for
  any choice of the rows in %the diagram
	 (\ref{ecalc})  can be completed to the whole diagram with $h=0$ in it.
%so that $h=0$ in it.
%the functor induced by the functor $\hf:\hw\to \hw'$.
\end{enumerate}
\end{pr}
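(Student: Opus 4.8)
The plan is to prove the three claims of \ref{pwt}(\ref{iwcalc}) in turn: the location of $\co(a),\co(b)$, the computation of the $(-n)$th component of $t(g)$, and finally the ``moreover'' statement.

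For the first claim I would show $\co(a)\in\cu_{w=n}$ (the argument for $\co(b)$ is identical). Rotating the top row of (\ref{ecalc}), i.e.\ the triangle $w_{\le n-1}M\xrightarrow{a}w_{\le n}M\to\co(a)\to w_{\le n-1}M[1]$, exhibits $\co(a)$ as an extension of $w_{\le n-1}M[1]\in\cu_{w\le n}$ by $w_{\le n}M\in\cu_{w\le n}$, so $\co(a)\in\cu_{w\le n}$ by the (shifted) extension-closedness of Proposition \ref{pbw}(\ref{iext}). For the lower bound I would apply the octahedral axiom to the composition $w_{\le n-1}M\xrightarrow{a}w_{\le n}M\to M$, where the two maps to $M$ come from the chosen weight decompositions so that $a$ is the corresponding $w$-truncation of $\id_M$; this produces a distinguished triangle $\co(a)\to w_{\ge n}M\to w_{\ge n+1}M\to\co(a)[1]$, whence (after rotation) $\co(a)$ is an extension of $w_{\ge n}M\in\cu_{w\ge n}$ by $w_{\ge n+1}M[-1]\in\cu_{w\ge n}$ and so lies in $\cu_{w\ge n}$. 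Combining, $\co(a)\in\cu_{w=n}$, consistently with Remark \ref{rwc}(\ref{irwcob}); in particular $\co(a)[-n]\in\hw$.

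For the second claim I would extend the data of (\ref{ecalc}) (the level $n-1$ and $n$ truncations and the maps $a,b,c,d$) to full compatible families of weight truncations $\{w_{\le j}M\}_{j\in\z}$, $\{w_{\le j}M'\}_{j\in\z}$ together with $w$-truncations $\{d_j\}_{j\in\z}$ of $g$ having $d_{n-1}=c$ and $d_n=d$; this is possible by the existence of weight decompositions and the composability of $w$-truncations (Proposition \ref{pbw}(\ref{icompl}) and Remark \ref{rcompl}(2)). Building $t(M),t(M'),t(g)$ from these families as in Remark \ref{rwc}(\ref{irwcob}), the $(-n)$th terms are $\co(a)[-n]$ and $\co(b)[-n]$, and the $(-n)$th component of the resulting morphism of complexes is exactly the map induced on cones, namely $h[-n]$. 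Since this construction represents $t(g)$ up to isomorphism of arrows in $\kw(\hw)$ (this is the content of the construction of the weight complex functor; see Remark \ref{rwc}(\ref{irwcob}) and \cite{bwcp}, \S1.3), we obtain $x\cong t(g)$ with $x^{-n}=h[-n]$.

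For the ``moreover'' part I would first reduce the hypothesis to a statement about weak homotopy. Assume $t(g)\cong y$ as arrows with $y^{-n}=0$; the isomorphism consists of $\kw(\hw)$-isomorphisms $t(M)\to Y$ and $t(M')\to Y'$ intertwining $t(g)$ and $y$, and by conservativity of $K(\hw)\to\kw(\hw)$ (Lemma \ref{lwwh}(\ref{iwhecat})) these lift to isomorphisms in $K(\hw)$. Conjugating $y$ by chain-map representatives of these lifts yields a chain map $z$ with $z^{-n}=0$ and $z\backsim t(g)$, so $t(g)\backsim_{[-n,-n]}0$ by Definition \ref{dbacksim}(2). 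Combined with the second claim this forces $h[-n]\in\hw(\co(a)[-n],\co(b)[-n])$, for any chosen columns, to be weakly homotopic to zero in degree $-n$, i.e.\ a boundary term in that single degree. It then remains to realize this algebraic vanishing geometrically: since $\co(a),\co(b)\in\cu_{w=n}$ the cone map $h$ is determined by $h[-n]$, so it suffices to choose the truncation columns so that $h[-n]=0$. Here I would use the non-uniqueness of the truncations $c,d$ of $g$: two level-$n$ truncations differ by a map factoring through $w_{\ge n+1}M'[-1]\to w_{\le n}M'$ (Proposition \ref{pbw}(\ref{icompl})), and I would show that the induced variation of $h[-n]$ runs precisely through the weak-homotopy boundary terms, the weight-complex differentials being realized by the connecting maps of the truncation triangles; choosing the correction cancelling the initial $h[-n]$ and a compatible $c$ (by composability) then gives a completion with $h=0$. \emph{The main obstacle} is exactly this last matching --- identifying the freedom in the geometric truncation columns with the algebraic weak-homotopy boundaries in degree $-n$ --- which genuinely uses the explicit description of the weight-complex differentials together with orthogonality (Proposition \ref{pbw}(\ref{iort}),(\ref{ifact})); a tempting alternative, namely to run the construction of the second claim in reverse starting from a $K(\hw)$-representative of $t(g)$ with vanishing $(-n)$th term (available by Lemma \ref{lwwh}(\ref{irwc4})), carries the same essential difficulty.
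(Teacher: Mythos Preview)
Your proposal addresses only part~\ref{iwcalc}, which is reasonable since the paper's proof of parts~\ref{irwcsh}--\ref{iwcfunct} consists entirely of citations to \cite{bwcp}, and the first portion of part~\ref{iwcalc} is likewise referred to the definition of $t$ in that reference. Your direct arguments for $\co(a)\in\cu_{w=n}$ (via extension-closedness and the octahedral axiom) and for the computation of the $(-n)$th component of $t(g)$ (by extending the given truncations to full compatible families) are correct and make explicit what the paper leaves to the cited construction.

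For the ``moreover'' part there are two issues. First, a small imprecision: after conjugating $y$ by chain-map representatives of the lifted isomorphisms you do \emph{not} get a chain map $z$ with $z^{-n}=0$; conjugation by homotopy equivalences does not preserve termwise vanishing. What you do get is that $\beta^{-1}y\alpha$ is weakly homotopic to a chain map with vanishing $(-n)$th term (since $(\beta_0' y_0 \alpha_0)^{-n}=0$ for any chain-map representatives with $y_0^{-n}=0$), and hence $t(g)\backsim_{[-n,-n]}0$. Your conclusion is right; the sentence should be reworded.

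Second, and more substantively, you explicitly flag as ``the main obstacle'' the step of matching the geometric freedom in the columns $c,d$ with the algebraic weak-homotopy boundaries in degree $-n$, and you do not actually carry it out. This is a genuine gap: it is precisely the content that the paper outsources to Proposition~1.3.4(13) of \cite{bwcp} together with Lemma~\ref{lwwh}(\ref{irwc4}). Your outline --- vary $d$ by a morphism factoring through $w_{\ge n+1}M'[-1]$ and show the induced change in $h[-n]$ realizes exactly the terms $d_{t(M')}^{-n-1}\circ s + s'\circ d_{t(M)}^{-n}$ --- is the right strategy, but the identification of those connecting maps with the weight-complex differentials, and the verification that every such boundary term arises, requires the explicit description of $t$ from \cite{bwcp} that you invoke but do not reproduce. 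As written, the argument is a correct plan rather than a proof.
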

\begin{proof}
Assertions \ref{irwcsh}--\ref{iwcfunct} are given by  Proposition 1.3.4(7,9,10,12)  %(7,8,10,12) 
 of  \cite{bwcp}. %\ref{iwcons} and \ref{iwcex} are given by  Proposition 1.3.4(7) of  \cite{bwcp}. %were essentially proved in \cite{bws}; see Proposition 1.3.4(7) and Remark 1.3.5(5) of  \cite{bwcp} for more detail. %Assertion \ref{iwcfunct}

%Taking into account our definitions
%Lastly,  The first part of assertion \ref{iwcalc} 
% \ref{iwcalc}.  The first part of the assertion 
Lastly, the first part of assertion \ref{iwcalc} follows from the definition of $t$ in ibid. %(see  %Proposition 1.3.4(6)  %and Remark 1.3.5(1)????  Applying an additive 
%of %that paper),
% part loc. cit.) 
and its "moreover" part %is an easy combination of Lemma \ref{lwwh}(\ref{irwcwd})  %nafig?????????? %with  
  easily follows from Proposition 1.3.4(13) of %\cite{bwcp} 
   ibid. (along with Lemma \ref{lwwh}(\ref{irwc4}) above).
%3.2.4(2) of \cite{bws}.
\end{proof}

\begin{rema}\label{rwchw}
Another property of $t$ that is easy to formulate is that its restriction to $\hw$ is isomorphic to the obvious embedding $\hw\to \kw(\hw)$; see %Remark 1.3.5(5) 
Proposition 1.3.4(10) of \cite{bwcp}. Note also that this property "almost follows" from part \ref{iwcalc} of our proposition.
\end{rema}

\section{On morphisms killing weights and objects without weights in a range}\label{skw}

In this section we introduce and study the main new notions of this paper.

In \S\ref{sskw} we %introduce our main notions of 
 define morphisms killing weights $m,\dots, n$ and  objects without these weights; we give several equivalent definitions of these notions.

In \S\ref{ssprkw} we establish several interesting properties of our notions. In particular, we prove that an object without weights  $m,\dots, n$ admits a (weight) {\it decomposition  avoiding these weights} (in the sense defined by J. Wildeshaus) if $\cu$ is weight-Karoubian.

In \S\ref{skwwc}  we relate killing weights to  the weight complex functor $t$. 
In particular, $M$ is without weights  $m,\dots, n$ if and only if  $t(M)$ %satisfies a similar 
 possesses this property.

In \S\ref{spure}  we relate killing a weight $m$ and object without weights in a range to {\it pure} functors (as introduced in \S2.1 of \cite{bwcp}).

\subsection{Morphisms that kill certain weights: equivalent definitions}\label{sskw}

\begin{pr}\label{pkillw}
Let $g\in \cu(M,N)$ (for some $M,N\in \obj \cu$); $m\le n\in \z$.
Then the following conditions are equivalent.

\begin{enumerate}
\item\label{ikw1}
There exists a choice of  $w_{\le n}M$ and $w_{\ge m}N$ such that the composed morphism $w_{\le n}M\to M\stackrel{g}{\to}N\to  w_{\ge m}N$ is zero (here the first and the third morphism in this chain come from the corresponding weight decompositions; see Remark \ref{rstws}(2)).  %, i.e., they are $w$-truncations of the $\id_M$ and).

\item\label{ikw3}
There exists a choice of  $w_{\le n}M$ and $w_{\le m-1}N$ and of a %commutative diagram 
morphism $h$ making the square
\begin{equation}\begin{CD} \label{ekw}
w_{\le n}M@>{}>>M
\\
@VV{%notation?!
h}V@VV{g}V\\
w_{\le m-1}N@>{}>>N 
\end{CD}\end{equation}
commutative.

\item\label{ikw5}
There exists a choice of  $w_{\ge n+1}M$ and $w_{\ge m}N$ and of a %commutative diagram 
morphism $j$ making the square
\begin{equation}\begin{CD} \label{ekw1}
M @>{}>>w_{\ge n+1}M
\\
@VV{%notation?!
g}V@VV{j}V\\
N @>{}>>w_{\ge m}N 
\end{CD}\end{equation}
commutative.
%\item\label{ikw6} For any choice of    $w_{\ge n+1}M$ and $w_{\ge m}N$ there exists a morphism $h$  that makes the diagram (\ref{ekw}) is commutative.

\item\label{ikw7} Any choice of an $n$-weight decomposition of $M$ and an $m-1$-weight decomposition of $N$ can be completed to a morphism of distinguished triangles of the form
\begin{equation}\label{eccompl} \begin{CD} w_{\le n} M@>{}>>
M@>{}>> w_{\ge n+1}M\\
@VV{h}V@VV{g}V@ VV{j}V \\
w_{\le m-1} N@>{}>>
N@>{}>> w_{\ge m}N \end{CD}\end{equation}

\item\label{ikw8} For any choice of  $m-1$- and $n$-weight decompositions of $M$ and $N$, and for $a$ and $b$ being the corresponding (canonical) connecting morphisms %$w_{\le m-1} N\to w_{\le n}  N$ 
$w_{\le m-1} M\to w_{\le n}  M$ and  $w_{\le m-1} N\to w_{\le n}  N$ respectively (see Remark \ref{rcompl}(1)),
there exists a commutative diagram
\begin{equation}\label{edouble}%??! =?!
 \begin{CD} w_{\le m-1} M@>{a}>>
w_{\le n} M@>{}>> M\\
@VV{c}V@VV{d}V@ VV{g}V \\
w_{\le m-1} N@>{b}>>
w_{\le n}  N@>{}>> N \end{CD}\end{equation}
along with a morphism $h\in \cu(w_{\le n} M, w_{\le m-1} N) $ that turns the corresponding "halves" of the left hand square of (\ref{edouble}) into commutative triangles.

\item\label{ikw9} %In the notation above
For any choice of the diagram (\ref{edouble}) as above its left hand commutative square can be completed to a morphism of  triangles as follows:
\begin{equation}\label{edoublecone}%??! =?!
 \begin{CD} w_{\le m-1} M@>{a}>>
w_{\le n} M@>{}>>\co(a)\\
@VV{c}V@VV{d}V@ VV{0}V \\
w_{\le m-1} N@>{b}>>
w_{\le n}  N@>{}>> \co(b)\end{CD}\end{equation}

\item\label{ikw9f} There exists a choice of  (\ref{edouble}) such that the corresponding diagram  (\ref{edoublecone})  gives a morphism of triangles.
\end{enumerate}
\end{pr}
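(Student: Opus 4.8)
The plan is to run a single web of implications anchored at condition \ref{ikw3}, handling first the three ``existence'' conditions \ref{ikw1}, \ref{ikw3}, \ref{ikw5}, then upgrading to the ``for any choice'' condition \ref{ikw7}, and finally treating the cone-theoretic conditions \ref{ikw8}, \ref{ikw9}, \ref{ikw9f}. For \ref{ikw1}$\Leftrightarrow$\ref{ikw3}$\Leftrightarrow$\ref{ikw5} I would apply twice the elementary factorization principle carried by a weight decomposition triangle. Writing $u_M\colon w_{\le n}M\to M$, $p_N\colon N\to w_{\ge m}N$ and $u_N\colon w_{\le m-1}N\to N$ for the structure maps, condition \ref{ikw1} reads $p_N\circ g\circ u_M=0$. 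Rotating the triangle $w_{\le m-1}N\to N\to w_{\ge m}N$ and applying $\cu(w_{\le n}M,-)$ shows $p_N\circ(g u_M)=0$ is equivalent to $g u_M$ factoring through $u_N$, i.e.\ to the commutativity of (\ref{ekw}); dually, applying $\cu(-,w_{\ge m}N)$ to $w_{\le n}M\to M\to w_{\ge n+1}M$ shows $(p_N g)\circ u_M=0$ is equivalent to $p_N g$ factoring through $M\to w_{\ge n+1}M$, i.e.\ to (\ref{ekw1}). One keeps $u_N$ and $p_N$ inside one weight decomposition of $N$ so that the choices match.

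Next comes choice-independence. Since any two $n$-weight decompositions of $M$ (resp.\ $(m-1)$-weight decompositions of $N$) are isomorphic by an isomorphism compatible with the structure maps --- the uniqueness of weight decompositions up to isomorphism, a consequence of Proposition \ref{pbw}(\ref{icompl}) applied to $\id_M$ together with the orthogonality \ref{pbw}(\ref{iort}) --- the factorization asserted in \ref{ikw3} does not depend on the chosen truncations, so the ``exists a choice'' form \ref{ikw3} coincides with its ``for any choice'' form. Granting this, \ref{ikw3}$\Rightarrow$\ref{ikw7} is immediate: for any prescribed rows the relevant $h$ exists, and axiom (TR3) completes the commutative square (\ref{ekw}) to a morphism of triangles (\ref{eccompl}) producing $j$; the converse \ref{ikw7}$\Rightarrow$\ref{ikw3} just reads off $h$.

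I would then dispatch \ref{ikw8}. Because $a,b$ are the \emph{canonical} connecting morphisms, one has $(w_{\le n}N\to N)\circ b=(w_{\le m-1}N\to N)$, so \ref{ikw8}$\Rightarrow$\ref{ikw3} is a one-line chase: if $b\circ h=d$ then $g\circ u_M=(w_{\le n}N\to N)\circ d=(w_{\le n}N\to N)\circ b\circ h=(w_{\le m-1}N\to N)\circ h$, which is (\ref{ekw}). For \ref{ikw3}$\Rightarrow$\ref{ikw8}, given the (choice-independent) morphism $h$ of \ref{ikw3} I would set $c:=h\circ a$ and $d:=b\circ h$ and check these are legitimate $w$-truncations of $g$ with $b\circ c=d\circ a$; the desired triangle identities $c=ha$, $d=bh$ are then built in by construction. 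The crucial recognition is that such an $h$ is exactly a chain homotopy witnessing that the two-term chain map $(c,d)\colon[w_{\le m-1}M\xrightarrow{a}w_{\le n}M]\to[w_{\le m-1}N\xrightarrow{b}w_{\le n}N]$ is null-homotopic. A null-homotopic map of two-term complexes induces the zero map on cones, and this persists inside $\cu$: with $p\colon w_{\le n}M\to\co(a)$, $q\colon\co(a)\to w_{\le m-1}M[1]$ (and $p',q'$ for $b$), one verifies that $(c,d,0)$ is a morphism of the cone triangles by checking its three squares --- the middle one needs $p'\circ d=p'\circ b\circ h=0$ since $p'\circ b=0$, and the connecting one needs $c[1]\circ q=h[1]\circ a[1]\circ q=0$, which holds because $a[1]\circ q=0$ for the consecutive maps of the rotated cone triangle of $a$. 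This yields \ref{ikw8}$\Rightarrow$\ref{ikw9f}, and applied to every diagram (\ref{edouble}) admitting such an $h$ it yields \ref{ikw8}$\Rightarrow$\ref{ikw9}; conversely \ref{ikw9f}$\Rightarrow$\ref{ikw3} uses only the middle square of (\ref{edoublecone}), which forces $p'\circ d=0$, hence $d$ factors through $b$, and the same chase produces (\ref{ekw}). The loop closes with \ref{ikw9}$\Rightarrow$\ref{ikw9f} by specializing to one existing diagram.

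I expect the last block to be the main obstacle, for two reasons. First, the genuinely triangulated input --- verifying that the zero map really does complete the square, i.e.\ that $a[1]\circ q=0$ and $p'\circ b=0$ for the consecutive maps of the cone triangles --- must be combined with the weight-range computation $\co(a),\co(b)\in\cu_{[m,n]}$ (obtained via the octahedral axiom and Proposition \ref{pbw}(\ref{iwd0})), which is what secures the orthogonalities used implicitly. Second, and more delicately, the quantifiers in \ref{ikw9} versus \ref{ikw9f} and \ref{ikw8} must be matched with care: the zero-completion cannot be asserted for an \emph{arbitrary} legitimate diagram (\ref{edouble}), only for those whose associated two-term chain map is null-homotopic, so I must lean on choice-independence to guarantee that, for every choice of weight decompositions, such a diagram exists. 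Getting this right is the heart of the argument; everything else is the factorization bookkeeping of the first two paragraphs.
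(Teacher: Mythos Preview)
Your overall strategy matches the paper's, and most of the implications are handled correctly. There is, however, a genuine error in your ``choice-independence'' step.

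You write that ``any two $n$-weight decompositions of $M$ (resp.\ $(m{-}1)$-weight decompositions of $N$) are isomorphic by an isomorphism compatible with the structure maps''. This is \emph{false}: weight decompositions are not unique up to isomorphism. A clean counterexample is $M=0$ with the stupid weight structure on $K(\bu)$; then both $0\to 0\to 0$ and $P[n]\to 0\to P[n+1]$ (for any nonzero $P\in\obj\hw$) are $n$-weight decompositions of $M$, and their left terms are certainly not isomorphic. Proposition~\ref{pbw}(\ref{icompl}) only gives you a \emph{morphism} between two $n$-weight decompositions (compatible with $\id_M$), not an isomorphism; uniqueness of that morphism requires a strict inequality of truncation levels, which you do not have here.

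Fortunately your conclusion survives: to transport the square (\ref{ekw}) from one choice of truncations to another you only need compatible morphisms, not isomorphisms. Given $h$ for $(w_{\le n}M,\,w_{\le m-1}N)$ and any other choices $(w'_{\le n}M,\,w'_{\le m-1}N)$, pick $\alpha\colon w'_{\le n}M\to w_{\le n}M$ and $\beta\colon w_{\le m-1}N\to w'_{\le m-1}N$ compatible with the structure maps (these exist by Proposition~\ref{pbw}(\ref{icompl})); then $h':=\beta\circ h\circ\alpha$ does the job. This is exactly the ``change of weight decompositions'' argument the paper invokes via Remark~\ref{rcompl}(1,2). Once you replace your isomorphism claim with this composition argument, the rest of your proof --- including your treatment of \ref{ikw8}$\Rightarrow$\ref{ikw9} via the null-homotopy $(c,d)=(h\circ a,\,b\circ h)$ and the verifications $p'\circ d=0$, $a[1]\circ q=0$ --- goes through and is essentially the paper's argument.
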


\begin{proof}
%Assume that condition \ref{ikw3} is fulfilled. 
Conditions \ref{ikw1}, \ref{ikw3}, and \ref{ikw5} are equivalent by Proposition 1.1.9 of \cite{bbd} (that is easy; %one should consider
 in particular, the  long exact sequence $\dots\to \cu(w_{\le n}M,  w_{\le m-1}N)\to \cu(w_{\le n}M,  N)\to \cu(w_{\le n}M,  w_{\ge m}N)\to\dots$ yields that condition \ref{ikw1} is equivalent to \ref{ikw3}). 

Loc. cit. also implies %We also obtain
 that any of these conditions  yields the existence of some diagram of the form (\ref{eccompl}) for the corresponding choices of rows. One also obtains a diagram of this form for arbitrary choices of these weight decompositions by composing this diagram with the corresponding "change of weight decompositions" diagrams (see Remark \ref{rcompl}(1,2)); so we obtain condition \ref{ikw7}. On the other hand, the latter condition obviously implies  conditions \ref{ikw1}, \ref{ikw3}, and \ref{ikw5}.
One also obtains a diagram %of this
 of the form (\ref{eccompl})  for arbitrary choices of these weight decompositions by composing this diagram with the corresponding "change of weight decompositions" diagrams (see Remark \ref{rcompl}(1,2) once again); so we obtain condition \ref{ikw7}. On the other hand, the latter condition obviously implies  conditions \ref{ikw1}, \ref{ikw3}, and \ref{ikw5}.

Next, condition \ref{ikw8} clearly implies condition \ref{ikw3}. Conversely, %in order to %complete  (\ref{ekw})  to  (\ref{edouble}) 
 to obtain the commutative diagrams in condition \ref{ikw8} it suffices to take $a$ and $b$ to be the canonical connecting morphisms %$w_{\le m-1} N\to w_{\le n}  N$ 
$w_{\le m-1} M\to w_{\le n}  M$ and  $w_{\le m-1} N\to w_{\le n}  N$
(see Remark \ref{rcompl}(1)) respectively, $c=h\circ a$, and $d=b\circ h$. %unique from $h$?!

Next, condition  \ref{ikw9} clearly yields condition \ref{ikw9f}. %If condition \ref{ikw9f}
Now, consider the long exact sequence $\dots\to \cu(w_{\le n} M,w_{\le m-1} N)\to \cu(w_{\le n} M,w_{\le n} N) \to \cu(w_{\le n} M,\co (b))\to \dots$ (for an arbitrary choice of (\ref{edouble})). If condition \ref{ikw9f} is fulfilled, the composed morphism $w_{\le n} M\stackrel{d}{\to} w_{\le n} N\to \co (b)$ is zero; hence there exists a morphism $h\in \cu(w_{\le n} M,w_{\le m-1} N)$ making the corresponding triangle (a "half" of the left hand square in (\ref{edoublecone})) commutative. Combining this with the commutativity of the right hand square in (\ref{edouble})  we obtain condition \ref{ikw3} once again.

It remains to verify that   condition \ref{ikw8} implies condition \ref{ikw9}. The aforementioned long exact sequence gives the vanishing of the corresponding composed morphism $w_{\le n} M\to \co (b)$, whereas the long exact sequence $$\begin{gathered}
\dots\to \cu(w_{\le n} M,w_{\le m-1} N)\to \cu(w_{\le m-1} M,w_{\le m-1} N) \\ \to \cu(\co(a)[-1],w_{\le m-1} N)\to \dots \end{gathered}$$ yields the vanishing of the composed morphism $\co(a)[-1]\to w_{\le m-1} N$. We obtain that (\ref{edoublecone}) is a morphism of triangles indeed.
\end{proof}

%rcompl

Now we give the main original definitions of this paper (yet cf. Remark \ref{rwild}(\ref{irwild1}) below).

\begin{defi}\label{dkw}
Let $m\le n\in \z$.

1. We will say that  a morphism $g$ {\it kills weights $m,\dots, n$} if it satisfies the equivalent conditions of the previous proposition (and   we will say that $f$ {\it kills weight $m$} if $n=m$); denote the class of all $\cu$-morphisms killing weights  $m,\dots, n$ by $\mo_{\cancel{[m,n]}}\cu$.

2. We will say that an object $M$  of $\cu$  is {\it without weights $m,\dots,n$} (or that $M$ {\it avoids} these weights) if $\id_M$ kills weights  $m,\dots, n$;
 %We will denote
 the class of  $\cu$-objects without weights $m,\dots,n$ will be denoted by  $\cu_{w\notin[m,n]}$.
\end{defi}

\begin{rema}\label{rkwsd}
1. Obviously, these definitions are self-dual in the following natural sense: $g\in \mo_{\cancel{[m,n]}}\cu$ (resp.  $M\in \cu_{w\notin[m,n]}$) if and only if $g$ kills $w^{op}$-weights $-n,\dots,-m$ (resp. $M$ %is without 
 avoids $w^{op}$-weights  $-n,\dots,-m$) in $\cu'=\cu^{op}$ (see Proposition \ref{pbw}(\ref{idual})).

2. Now we describe a simple example that illustrates our definitions.

Let $\bu=\lvect$ (more generally, one can consider any  semi-simple  abelian category here) and recall that $\wstu$ denotes the stupid weight structure on $K(\bu)$ (see Remark \ref{rstws}(1); one can also take $\cu=K^b(\bu)$ here). Then  $M$ belongs to $ \cu_{\wstu\le 0}$ (resp. to $ \cu_{\wstu\ge 0}$) if and only if the homology $H_i(M)=H_0(M[-i])$ %(see the convention in \S\ref{snotata}) 
 vanishes for $i>0$ (resp. for $i<0$). Hence $g\in \cu(M,N)$ kills weights $m,\dots,n$ (resp. $M$ %is without 
 avoids weights  $m,\dots,n$) if and only if for the cohomological functor $H=\cu(-,L)$ (here we put $L$ in degree zero) we have $H^i(g)=0$ (resp. $H^i(M)=\ns$) 
%we have $\cu(-,L[i])(g)=0$ (resp. $\cu(M,L[i])=0$; so we put $L$ in degree $-i$) 
  for all $i\in \z$, $m\le i\le n$. Thus the functors %$\cu(-,L[i])$
	 $H^i$ for $m\le i\le n$ yield a collection of cohomology theories that detect whether $g\in \mo_{\cancel{[m,n]}}\cu$ %kills weights $m,\dots,n$ 
and $M\in \cu_{w\notin[m,n]}$. In the case  $m=n$ a certain  general result related to this one is given by  % related to this one %certain generalization of this observation  in 
 Proposition \ref{puredetkw}(I) below.
 % $M$ is without weights  $m,\dots,n$. %???????????? We %certainly 
% do not have so simple "detecting families" of functors in general; yet we will construct quite interesting detecting classes of cohomology below (see Theorem \ref{tkwhom}  for the general case and Proposition \ref{pshtopn}(\ref{it10})  for the case $\cu=\shtop$). We prefer considering cohomological detectors (in this paper) for the reasons explained in Remark \ref{rtst}(4)  (cf. also Remark \ref{rkwmot}(6) below).
\end{rema}

\subsection{Basic properties of our  notions}\label{ssprkw}

\begin{theo}\label{tprkw}
%Adopt the  notation of Proposition \ref{pkillw} and assume that $g$ .
Let  $M,N,O\in \obj \cu$, $h\in \cu(N,O)$, and assume that a morphism $g\in \cu(M,N)$ kills weights $m,\dots, n$ for some $m\le n\in \z$. Then the following statements are valid.

\begin{enumerate}
\item\label{iprkw1}
Assume  $m\le m'\le n'\le n$. Then %any $g$ killing weights $m,\dots, n$ 
$g$ also kills weights $m',\dots, n'$.

\item\label{iprkwds} $\mo_{\cancel{[m,n]}}\cu$ is closed with respect to %sums, 
 direct sums and retracts (i.e., $\bigoplus g_i$ kills weights $m,\dots, n$ if and only if all $g_i$ do that).

\item\label{iprkwid}  $\mo_{\cancel{[m,n]}}\cu$ is  a two-sided ideal of morphisms, i.e., for any $h'\in \cu(O,M)$ both $h\circ g$ and $g\circ h'$ kill weights $m,\dots, n$.

\item\label{iprkwcomp} Assume that %$h$ kills weights %$n+1,\dots, n'$ for some $n'>n$ 
%a $\cu$-morphism $h$ composable with $g$ 
$h$ kills weights $m',\dots, m-1$ for some $m'<m$. Then $h\circ g$ kills weights %$m,\dots,n'$ (resp. weights  
$m',\dots, n$.

\item\label{iprkwfunct} Let $F:\cu\to \du$ be a weight-exact functor (with respect to a certain weight structure for $\du$) and  assume that %a $\cu$-morphism 
 $h$ kills weights $m,\dots, n$. Then $F(h)$ kills these weights as well.

\item\label{iprkwfunctemb} For $F$ and $h$ as in the previous assertion assume that $F$ is a full embedding and $F(h)\in \mo_{\cancel{[m,n]}}\du$. %kills weights $m,\dots, n$. 
Then $h\in \mo_{\cancel{[m,n]}}\cu$. %kills these weights also.

\item\label{iprkwcompobj} Assume that $O$ %is without weights $m,\dots, n$ as well as without weights 
 avoids weights $m,\dots, n$ along with weights  $n+1,\dots, n'$ for some $n'>n$. Then $O\in  \cu_{w\notin[m',n]}$.

\item\label{iprkws} $O$ %is without 
 avoids weights $m,\dots, n$ if and only if $O$ is without weight $i$ whenever $m\le i\le n$. 

\item\label{iwildef1}
%Let 
Assume that there exists a distinguished triangle 
\begin{equation}\label{ewild}
X\to O \to Y\to X[1]
\end{equation}
with $X\in \cu_{w\le m-1}$, $Y\in \cu_{w\ge n+1}$ (we call it a {\it  decomposition avoiding weights  $m,\dots, n$} for $M$). Then (\ref{ewild})  gives  $l$-weight decompositions of $O$ for any $l\in \z$, $m-1\le l\le n$. Moreover, $O$ is without weights $m,\dots,n$, and  
this triangle is unique up to a canonical isomorphism. %functorial: here or in {rwild}?!

\item\label{iwildef2} Assume that $\cu$ is weight-Karoubian. Then the converse to the previous assertion is also valid (i.e., %for any object\
any  $O$  without weights $m,\dots,n$ %then there exists a distinguished triangle of the form (\ref{ewild})).
 admits a   decomposition avoiding weights  $m,\dots, n$).
\end{enumerate}
\end{theo}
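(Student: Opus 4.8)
The plan is to reduce the assertion to the case of a single weight by ascending induction on the length of the range, and to settle that case by splitting an idempotent endomorphism of a weight truncation by means of Proposition~\ref{pbw}(\ref{icompidemp}). Concretely, I would first isolate the following \emph{single-weight statement}: if $O$ is without weight $k$ and $Z$ is \emph{any} chosen $w_{\le k-1}O$, then there is a distinguished triangle $M_1\to O\to M_2\to M_1[1]$ with $M_2\in\cu_{w\ge k+1}$ and $M_1$ a retract of $Z$.

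To prove this single-weight statement I would apply condition~(\ref{ikw3}) of Proposition~\ref{pkillw} to $\id_O$ (taking $M=N=O$): it furnishes a morphism $h\colon w_{\le k}O\to Z$ whose composite with the canonical $f\colon Z\to O$ equals the canonical $p\colon w_{\le k}O\to O$, i.e. $f\circ h=p$. Let $a\colon Z\to w_{\le k}O$ be the connecting morphism with $p\circ a=f$; since $k-1<k$, Proposition~\ref{pbw}(\ref{icompl}) (cf. Remark~\ref{rcompl}(1)) makes $a$ the \emph{unique} $w$-truncation of $\id_O$ for the fixed rows. From $f\circ h=p$ and $p\circ a=f$ one gets $p\circ(a\circ h\circ a)=f=p\circ a$, so the square with left vertical $a\circ h\circ a$ commutes over $\id_O$; completing it to a morphism of triangles and invoking the same uniqueness forces $a\circ h\circ a=a$. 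Hence $e:=a\circ h$ is an idempotent endomorphism of $w_{\le k}O$ with $p\circ e=p$, so $(e,\id_O,j)$ is a self-morphism of the $k$-weight decomposition triangle of $O$. As $(\cu,w)$ is weight-Karoubian, Proposition~\ref{pbw}(\ref{icompidemp}) applies and yields $w_{\le k}O\cong M_1\oplus M_0$ with $M_0\in\cu_{w=k}$, $M_1=\imm(e)$, and a $k$-weight decomposition $M_1\to O\to M_2$ (so $M_2\in\cu_{w\ge k+1}$). Finally, because $e=a\circ h$ factors through $Z$, the image $M_1$ is a retract of $Z$ (the idempotent $e$ restricts to $\id_{M_1}$ factored as $M_1\to Z\to M_1$), which completes the single-weight statement.

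With this in hand I would run the induction on $k=m-1,m,\dots,n$, the claim at stage $k$ being that $O$ admits a decomposition avoiding weights $m,\dots,k$; stage $k=m-1$ is vacuous, witnessed by any $(m-1)$-weight decomposition, which has lower part in $\cu_{w\le m-1}$ and upper part in $\cu_{w\ge m}$. Given a triangle $X_k\to O\to Y_k$ with $X_k\in\cu_{w\le m-1}$ and $Y_k\in\cu_{w\ge k+1}$, part~(\ref{iwildef1}) shows it is in particular a $k$-weight decomposition, so $X_k$ is an admissible choice of $w_{\le k}O$. Since $O$ is without weights $m,\dots,n$, part~(\ref{iprkw1}) shows it is without weight $k+1$ (as $m\le k+1\le n$); applying the single-weight statement at level $k+1$ with $Z=X_k$ produces $M_1\to O\to M_2$ with $M_2\in\cu_{w\ge k+2}$ and $M_1$ a retract of $X_k$. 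Because $\cu_{w\le m-1}$ is retraction-closed (axiom~(i) of Definition~\ref{dwstr}), we get $M_1\in\cu_{w\le m-1}$, i.e. a decomposition avoiding weights $m,\dots,k+1$; at $k=n$ this is exactly the desired decomposition.

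The step I expect to demand the most care is the descent from the Karoubi envelope back to $\cu$: the idempotent $e$ only splits a priori in $\kar(\cu)$, and under the mere weight-Karoubianity hypothesis one can guarantee that a split summand lands in $\cu$ only when it lies in some $\cu_{w=k}$ (equivalently, in a shift of the idempotent-complete heart $\hw$). This is precisely why I would resist the temptation to split a single idempotent on $w_{\le n}O$ for the whole range at once: there the complementary summand would be a retract of a weight-$[m,n]$ object of length $n-m+1>1$, which need not descend to $\cu$. Peeling off one weight per step keeps the complementary summand $M_0$ inside $\cu_{w=k+1}$, where descent does hold — this is exactly what is packaged in Proposition~\ref{pbw}(\ref{icompidemp}), so I can treat it as a black box. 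I would close by noting that the construction is compatible with the self-duality of the notions recorded in Remark~\ref{rkwsd}(1), so no separate dual argument is required.
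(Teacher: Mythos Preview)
Your inductive argument is correct, but it takes a longer route than the paper's proof, and your stated reason for avoiding the one-step approach rests on a misreading of Proposition~\ref{pbw}(\ref{icompidemp}). The paper proceeds exactly by ``splitting a single idempotent on $w_{\le n}O$ for the whole range at once'': it fixes an $n$-weight decomposition and an $(m-1)$-weight decomposition of $O$, uses condition~\ref{ikw3} of Proposition~\ref{pkillw} to obtain $z\colon w_{\le n}O\to w_{\le m-1}O$ over $\id_O$, takes the unique $t\colon w_{\le m-1}O\to w_{\le n}O$ over $\id_O$, checks that $u=t\circ z$ is idempotent (via $t\circ z\circ t=t$), and then applies Proposition~\ref{pbw}(\ref{icompidemp}) directly. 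The point you missed is that this proposition always places the complementary summand $M_0$ in the \emph{single} weight class $\cu_{w=n}$, not merely in $\cu_{[m,n]}$: its proof shows $M_0$ is simultaneously a retract of $w_{\le n}O$ and of $w_{\ge n+1}O[-1]$, hence of $w_{\ge n}(w_{\le n}O)\in\cu_{w=n}$. So weight-Karoubianity suffices immediately, and the image $X$ of $u$ lies in $\cu_{w\le m-1}$ because $u$ factors through $w_{\le m-1}O$.

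In short, your single-weight statement is essentially the paper's whole argument specialised to $n=m$, and the induction layer you build on top of it is unnecessary. What your approach buys is perhaps a slightly more transparent bookkeeping of where retraction-closure is used; what the paper's approach buys is brevity and a cleaner application of Proposition~\ref{pbw}(\ref{icompidemp}).
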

\begin{proof}

\begin{enumerate}
\item Easy (if we use condition \ref{ikw3} of Proposition  \ref{pkillw}) %if we apply 
 from Remark \ref{rcompl}(1,2). %  (once again).  %?!!

\item  Proposition \ref{pbw}(\ref{iext}) implies that %a
  all direct sums of weight decompositions are weight decompositions. This implies the assertion easily; see %(if we use 
conditions \ref{ikw1} and \ref{ikw7} of Proposition  \ref{pkillw}. 

%\item,\item  
\ref{iprkwid}, \ref{iprkwcomp}.  Easy since we can compose the diagrams given by  Proposition  \ref{pkillw}(\ref{ikw3}) and Proposition \ref{pbw}(\ref{icompl}); see Remark \ref{rcompl}(2) once again.

%\item %If we have
\ref{iprkwfunct}. If we make choices corresponding to condition \ref{ikw1} of Proposition  \ref{pkillw}  and apply $F$ to %(some choice of) the vanishing for $h$ given by 
 the corresponding vanishing then we obtain this condition for $F(h)$.

%\item 
\ref{iprkwfunctemb}. For any choice of $w_{\le n}M$ and $w_{\ge m}N$  the composed morphism
$F(w_{\le n}M)\to F(M)\stackrel{F(h)}{\to}F(N)\to F( w_{\ge m}N)$ is zero (see  condition \ref{ikw3} of Proposition  \ref{pkillw}); hence  this condition is fulfilled for $h$.

%\item
\ref{iprkwcompobj}.  Since $\id_O\circ \id_O=\id_O$,  the statement follows from assertion \ref{iprkwcomp}.  %Immediate from the previous assertion (since $\id_O\circ \id_O=\id_O$).

%\item 
\ref{iprkws}.  If $O$ %is without 
 avoids weight $i$ whenever $m\le i\le n$ then iterating the previous assertion we obtain that $O$ is without weights $m,\dots, n$. Conversely, if $O$ satisfies the latter assumption  and  $m\le i\le n$ then $\id_O$ kills weight $i$ (i.e., $O$ %is without 
 avoids  weight $i$) %in this range
  according to assertion \ref{iprkw1}.

%\item
 \ref{iwildef1}. Each statement in this assertion easily follows from the previous ones.

(\ref{ewild}) gives the corresponding $l$-weight decompositions of $O$ just by definition. We obtain that $O$ %is without 
 avoids weights $m,\dots, n$ immediately (here we can use either condition \ref{ikw3} or condition \ref{ikw5} of Proposition  \ref{pkillw}). This triangle (\ref{ewild})  is canonical by Proposition \ref{pbw}(\ref{icompl}) (if we take $M=M'=O$, $g=\id_O$, $m=n-1$, and $l=n$ in it). %here we ignore

%\item 
\ref{iwildef2}. The idea is to "modify" any (fixed) $n$-decomposition  of $O$ using Proposition \ref{pbw}(\ref{icompidemp}).

We also fix an $m$-weight decomposition of $O$. According to condition \ref{ikw3} in  Proposition  \ref{pkillw} there exists a commutative square
%\begin{equation}
$$\begin{CD} %\label{ekw}
w_{\le n}O@>{}>>O
\\
@VV{%notation?!
z}V@VV{\id_O}V\\
w_{\le m-1}O@>{}>>O 
\end{CD}$$
Next, Proposition \ref{pbw}(\ref{icompl}) gives the existence and uniqueness of the square
$$\begin{CD} %\label{ekw}
w_{\le m-1}O@>{}>>O
\\
@VV{%notation?!
t}V@VV{\id_O}V\\
w_{\le n}O@>{}>>O 
\end{CD}$$
Now, we can consider multiple compositions of these squares (see Remark \ref{rcompl}). Hence the aforementioned uniqueness statement implies $t=t\circ z\circ t$. Thus the endomorphism $u=t\circ z$ is idempotent, and the square $$\begin{CD} %\label{ekw}
w_{\le n}O@>{}>>O
\\
@VV{%notation?!
u}V@VV{\id_O}V\\
w_{\le n}O@>{}>>O 
\end{CD}$$
is commutative. Now we apply Proposition \ref{pbw}(\ref{icompidemp});  for $X$ being the "image" of $u$ we obtain an $n$-weight decomposition 
$X\to O\to Y$. It remains to note that $X\in \cu_{w\le m-1}$ since $u$ factors through $w_{\le m-1}O$.
\end{enumerate} 
\end{proof}

\begin{rema}\label{rwild}
\begin{enumerate}
%\item\label{irwild0} Combining 
\item\label{irwild1}
%Certainly, the distinguished triangle (\ref{ewild}) yields an ?-weight decomposition of $M$. %needed in the proof anyway?!
The existence of  a   decomposition of $O$ avoiding weights  $m,\dots, n$ means precisely %yields that
 that $O$  is without weights $m,\dots,n$ in the sense of Definition 1.10 of \cite{wild}. So, our definition of this notion is equivalent to the original definition of Wildeshaus (who introduced this term) if $\cu$ is  weight-Karoubian;  recall here that this is automatically the case if $\cu$ is Karoubian (see Proposition \ref{pbw}(\ref{ikwkar})).  %however, 
 Still \S\ref{ssskwild} below demonstrates that this equivalence statement does not hold unconditionally. %In particular, 

Hence the uniqueness statement in Theorem \ref{tprkw}(\ref{iwildef1}) %is %a particular case of the functoriality result given by 
 coincides with Corollary 1.9 of \cite{wild}. % state it?! here?!
%Below we will also mention the functoriality of weight decompositions of this form. So we formulate 
Moreover, the obvious modification of the proof of   Theorem \ref{tprkw}(\ref{iwildef1})  gives %the corresponding 
 Proposition 1.7 of ibid. that is essentially as follows:  
 if $X_i\to O_i \to Y_i$ are distinguished triangles for $i=1,2$, $X_i\in \cu_{w\le m-1}$, $Y_i\in \cu_{w\ge n+1}$ (and $n\ge m$), then any $g:O_1\to O_2$ uniquely extends to a morphism of these weight decompositions (cf. Theorem \ref{tprkw}(\ref{iwildef1}) once again; note also that the argument used in the proof of our theorem extends to re-prove loc. cit. without any difficulty).  

\item\label{irwild2}
Combining parts \ref{iprkwds} and  \ref{iprkwid} of our theorem one immediately obtains that the sum of any two parallel morphisms killing weights $m,\dots, n$ kills these weights as well.

Moreover, part \ref{iprkwds} of the theorem implies that   $\cu_{w\notin[m,n]}$ is additive and retraction-closed in $\cu$, %??? (i.e., it is additive and retraction-closed in $\cu$), 
 whereas  part  \ref{iprkwid} yields that any $\cu$-morphism from $M$ kills weights $m,\dots,n$ if (and only if; look at $\id_M$)  $M$ avoids these weights.
%\item\label{irwild4}  Certainly, parts  \ref{iprkw1}--\ref{iprkwcomp} of  Theorem \ref{tprkw} imply that the sum of two morphisms $M\to N$ killing weights $m,\dots,n$ kills these weights also; a direct proof of this fact is  very easy as well.

\item\label{irwild6} %Certainly, p
 Part \ref{iprkwfunct} of our theorem says that weight-exact functors respect the condition of %being without 
 avoiding weights $m,\dots,n$, whereas   full weight-exact embeddings "strictly respect" this condition. Hence  full weight-exact embeddings of weight-Karoubian  categories also strictly respect the condition of an object to possess a decomposition  avoiding weights  $m,\dots, n$. %of the type (\ref{ewild}).  
\end{enumerate}
\end{rema}

\subsection{Relation to the weight complex functor} %(and its conservativity)}
\label{skwwc} 

Now we relate the properties studied in the previous subsection with the weight complex functor.

\begin{theo}\label{tpwckill}
Let $g\in \cu(M,N)$ (for some $M,N\in \obj \cu$); $m\le n\in \z$. Then the following statements are valid.

\begin{enumerate}
\item\label{iwckill1} $g$ kills weight $m$ if and only if $t(g)\backsim_{[-m,-m]} 0$ (in the notation of Definition \ref{dbacksim}). %Remark \ref{rwc}(\ref{irwc3}); 
 %recall that this property does not depend on the choice of $t(g)$ according to Lemma \ref{lbacksim}(\ref{irwcwd})). 

\item\label{iwckill2} If %$l\le m\in \z$ and 
$\{f_i\}$ for $n\ge i\ge m$ form a chain of composable $\cu$-morphism such that %$t(f_i)\backsim_{[-i,-i]}0$
 for all $i$ in this range, then %$f_{n}\circ f_{n-1}\circ\dots \circ f_m$???
  $f_m\circ\dots \circ f_{n-1}\circ f_{n}$ kills weights  $m,\dots, n$. 

\item\label{iwckill3} $M$ is without 
 %avoids
 weights $m,\dots, n$ if and only if  $t(\id_M)\backsim_{[-n,-m]}0$. %re-formulate in terms of WC-decompositions?!!

\item\label{iwckill4} Assume in addition that $\cu$ is weight-Karoubian. Then $M$ %is without 
 avoids weights $m,\dots, n$ if and only if  $t(M)$ is homotopy equivalent to a complex $C=(C^i)$ with $C^i=0$ for $-n\le i\le -m$. 
\end{enumerate}
\end{theo}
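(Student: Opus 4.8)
The plan is to establish the four assertions in order, each feeding the next.

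For \ref{iwckill1} I would match Proposition \ref{pwt}(\ref{iwcalc}), taken with its index $n$ replaced by $m$, against condition \ref{ikw9f} of Proposition \ref{pkillw}, taken with $m=n$: the diagram (\ref{ecalc}) is then literally an instance of (\ref{edoublecone}), and $g$ kills weight $m$ exactly when the cone morphism in it may be chosen to vanish. Since Proposition \ref{pwt}(\ref{iwcalc}) identifies $t(g)$ up to $\kw(\hw)$-isomorphism with a morphism $x$ whose degree $-m$ component is $h[-m]$, a zero cone morphism $h$ yields a representative of $t(g)$ that vanishes in degree $-m$, i.e. $t(g)\backsim_{[-m,-m]}0$ by Lemma \ref{lwwh}(\ref{irwc4}); conversely the ``moreover'' clause of Proposition \ref{pwt}(\ref{iwcalc}) turns $t(g)\backsim_{[-m,-m]}0$ back into a diagram (\ref{edoublecone}) with zero cone morphism, which is condition \ref{ikw9f}. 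The one point to check is that $\backsim_{[-m,-m]}0$ is invariant under composition with $\kw(\hw)$-morphisms on either side (immediate since $\backsim$ is a congruence by Lemma \ref{lwwh}(\ref{iwhecat}) and composition acts termwise on the degree $-m$ component), which lets one pass freely between $t(g)$ and the $x$ of loc. cit.

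Assertion \ref{iwckill2} then follows at once: by \ref{iwckill1} each $f_i$ kills weight $i$, and iterating Theorem \ref{tprkw}(\ref{iprkwcomp})---which merges a morphism killing weights $m',\dots,m-1$ with one killing $m,\dots,n$ into a morphism killing $m',\dots,n$---assembles the single weights $m,m+1,\dots,n$ into the whole range. For \ref{iwckill3} I would chain three equivalences: $M$ avoids $m,\dots,n$ iff it avoids every weight $i\in[m,n]$ (Theorem \ref{tprkw}(\ref{iprkws})); for each such $i$, $t(\id_M)=\id_{t(M)}$ satisfies $t(\id_M)\backsim_{[-i,-i]}0$ iff $M$ avoids weight $i$ (assertion \ref{iwckill1}); and $t(\id_M)\backsim_{[-i,-i]}0$ for all $i\in[m,n]$ iff $t(\id_M)\backsim_{[-n,-m]}0$ (Lemma \ref{lwwh}(\ref{irwc3})).

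The substance is in \ref{iwckill4}, and the idea is to transport everything, via \ref{iwckill3}, to the stupid weight structure $\wstu$ on $K(\hw)$. Writing $X=t(M)$, assertion \ref{iwckill3} reads: $M$ avoids $m,\dots,n$ iff $\id_X\backsim_{[-n,-m]}0$. I then apply \ref{iwckill3} a second time, now to the weighted category $(K(\hw),\wstu)$: as $\hw$ is Karoubian its heart is $\hw$ itself, and the cone-of-truncations description of the weight complex (Remark \ref{rwc}(\ref{irwcob})) shows that brutal truncations recover $X$, so the weight complex functor of $(K(\hw),\wstu)$ is the canonical functor $K(\hw)\to\kw(\hw)$; hence $\id_X\backsim_{[-n,-m]}0$ is precisely the assertion that $X$ avoids $\wstu$-weights $m,\dots,n$. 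Since $(K(\hw),\wstu)$ is weight-Karoubian, Theorem \ref{tprkw}(\ref{iwildef1},\ref{iwildef2}) converts this into a $\wstu$-decomposition $A\to X\to B\to A[1]$ with $A\in K(\hw)_{\wstu\le m-1}$ and $B\in K(\hw)_{\wstu\ge n+1}$, i.e. with $A$ supported (up to homotopy) in degrees $\ge -m+1$ and $B$ in degrees $\le -n-1$. The key final observation is that, because $m\le n$, these two supports are disjoint, so $K(\hw)(B,A[1])=0$, the connecting map is zero, the triangle splits as $X\simeq A\oplus B$, and the latter is homotopy equivalent to a complex with zero terms in degrees $-n,\dots,-m$. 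The converse is the easy direction, since such a complex visibly decomposes into its two ends and so avoids $\wstu$-weights $m,\dots,n$.

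I expect the main obstacle to be exactly the step where weight-Karoubianness is used: manufacturing an honest complex with a gap out of the abstract condition ``$X$ avoids $\wstu$-weights $m,\dots,n$'' rests on Theorem \ref{tprkw}(\ref{iwildef2}), i.e. on idempotent completeness of $\hw$; without it only the $\backsim$-characterisation of \ref{iwckill3} survives, which is the subtlety deferred to \S\ref{snkar}. A secondary nuisance running through the argument is the bookkeeping between the weak-homotopy relation $\backsim_{[k,l]}$ in $\kw(\hw)$ and genuine homotopy equivalence in $K(\hw)$, mediated by the conservativity of $K(\hw)\to\kw(\hw)$ from Lemma \ref{lwwh}(\ref{iwhecat}).
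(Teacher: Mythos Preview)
Your argument is correct. Parts \ref{iwckill1}--\ref{iwckill3} match the paper's proof (with the minor variant that for the converse in \ref{iwckill3} you invoke Theorem \ref{tprkw}(\ref{iprkws}) directly, whereas the paper composes $\id_M$ with itself $n-m+1$ times and appeals to part \ref{iwckill2}; both are fine).

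For \ref{iwckill4} you take a genuinely different route. The paper works in $\cu$: it invokes Theorem \ref{tprkw}(\ref{iwildef2}) to produce a decomposition $X\to M\to Y$ avoiding weights $m,\dots,n$ \emph{inside $\cu$}, then pushes this triangle through $t$ using Proposition \ref{pwt}(\ref{iwcons},\ref{iwcex}) to obtain a $K(\hw)$-triangle $T_X\to t(M)\to T_Y$ with $T_X$, $T_Y$ supported on the correct sides of the gap; the cone description then gives a representative of $t(M)$ with vanishing terms in degrees $-n,\dots,-m$. You instead stay in $K(\hw)$: having reduced via \ref{iwckill3} to $\id_{t(M)}\backsim_{[-n,-m]}0$, you reapply \ref{iwckill3} reflexively to $(K(\hw),\wstu)$, identify its weight complex functor with the projection $K(\hw)\to\kw(\hw)$, and then use Theorem \ref{tprkw}(\ref{iwildef2}) inside $K(\hw)$ to produce the gapped decomposition there. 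Your final splitting step (showing $K(\hw)(B,A[1])=0$ by disjointness of supports) is correct but not needed: the cone of $B[-1]\to A$ already has zero terms in the range, which is how the paper implicitly concludes. The paper's approach is shorter since it avoids having to identify $t_{\wstu}$; yours makes transparent that, modulo \ref{iwckill3}, assertion \ref{iwckill4} is a statement purely about the stupid weight structure on $K(\hw)$, with the weight-Karoubian hypothesis on $\cu$ used only through the Karoubianness of $\hw$.
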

\begin{proof}

\begin{enumerate}
\item Immediate from Proposition \ref{pwt}(\ref{iwcalc}); see also condition \ref{ikw9f} in  Proposition \ref{pkillw}.
%This is just a re-formulation of 
%If $g$ kills weight $m$ then we can choose $t(g)$ such that the component $t(g)^{-m}$ is zero  (see Proposition \ref{pkillw}(\ref{ikw9})). Conversely, if $t(g)\backsim_{[-m,-m]} 0$ then $g$ fulfils the criterion of killing weight $m$ provided by Proposition \ref{pkillw}(\ref{ikw9f}) according to Proposition 3.2.4(2) of \cite{bws}. %Hence  $g$ kills weight $m$ (see Proposition \ref{pkillw}(\ref{ikw9f})).

\item Straightforward from the previous assertion combined with Theorem \ref{tprkw}(\ref{iprkwcomp}). % (see Remark \ref{rwc}(\ref{irwc3})).

\item If $M\in \cu_{w\notin[m,n]}$ %is without weights $m,\dots, n$
  then  %we can choose $t(\id_M)$ so that $t(\id_M)^i= 0$ for all $i$ between $-n$ and $-m$ (this is an easy consequence of Proposition \ref{pkillw}(\ref{ikw9})); hence %this is true for any choice of 
%Proposition \ref{pwt}(\ref{iwcalc}) easily implies that
 combining assertion \ref{iwckill1} with Lemma \ref{lwwh}(\ref{irwc3}) we obtain $t(\id_M)\backsim_{[-n,-m]}0$. Conversely, if  $t(\id_M)\backsim_{[-n,-m]}0$ then $t(\id_M)\backsim_{[i,i]} 0$ for all $i$ between $-n$ and $-m$; thus applying the previous assertion to 
%it is certainly without weight $i$ for any $i$ between $m$ and $n$. Hence applying the previous assertion to
 the composition $\id_{M}^{\circ n-m+1}$ we obtain that $M$ %is without 
 avoids weights $m,\dots, n$. 

\item The "if" implication follows from the previous assertion immediately; cf. Definition \ref{dbacksim}(2).

 Conversely, assume that $M$ %is without 
 avoids weights $m,\dots, n$.  By Theorem \ref{tprkw}(\ref{iwildef2}),  %if and only if
  $M$ possesses a  decomposition avoiding weights  $m,\dots, n$. Then for the corresponding objects $X$ and $Y$ (see (\ref{ewild})) Proposition \ref{pwt}(\ref{iwcons}) says that $t(X)\in K(\hw)_{\wstu\le m-1}$ and $t(Y)\in K(\hw)_{\wstu\ge n+1}$. Recalling the definition of $\wstu$ (in Remark \ref{rstws}(1)) and applying Proposition \ref{pwt}(\ref{iwcex}) we obtain the existence of a $K(\hw)$-distinguished triangle $T_X\to t(M)\to T_Y\to T_X[1]$, where $T_X$ and $T_Y$ have zero terms in degrees at most $-m$ and at least $-n$, respectively. Thus we obtain the "only if" implication.
\end{enumerate}
\end{proof}

\begin{rema}\label{rwildwc}

1. %Certainly, p
Part \ref{iwckill2} of our proposition is a vast generalization of (the nilpotence statement in) \cite[Theorem 3.3.1(II)]{bws}. %similar argument???
Moreover, we obtain an alternative proof of the latter statement that does not depend either on it or on Proposition 3.2.4 of ibid. (cf. Remark A.2.1(3) of \cite{bwcp}). 

%So 
 To justify these claims we recall that Theorem 3.3.1(II) of \cite{bws} essentially states that  $f=%f_{n}\circ f_{n-1}\circ\dots \circ f_m
  f_m\circ\dots \circ f_{n-1}\circ f_{n}=0$ whenever $f_i$ are certain morphisms between elements of $\cu_{[m,n]}$ (see Definition \ref{dwso}(\ref{id[ij]})) and $t(f_i)=0$. 
Now, if this is the case then clearly $t(f_i)\backsim_{[-i,-i]}0$; hence $f$ kills weights $m,\dots, n$   by %Proposition  
 Theorem \ref{tpwckill}(\ref{iwckill2}). Next, if $f\in \cu(M,N)$ for $M,N\in \cu_{[m,n]}$ then we can take $w_{\le n}M=M$ and $w_{\ge m}N=N$; thus $f=0$ (see condition \ref{ikw7} in Proposition \ref{pkillw}). Hence our %Proposition
 Theorem \ref{tpwckill}(\ref{iwckill2}) generalizes loc. cit. indeed.

%for $n\ge i\ge m$ is a chain of composable $\cu$-morphism such that $t(f_i)\backsim_{[-i,-i]}0$ for all $i$ in this range, then $f_{n}\circ f_{n-1}\circ\dots \circ f_m$ kills weights  $m,\dots, n$. 

2. %It could make  sense to
 One may extend the notion of morphisms killing weights (in a range) as follows: one can say that $g\in \mo \cu$  kills weights $m,\dots, n$ for any $m\le n$, where $m,n\in \{-\infty\}\cup \z\cup \{+\infty\}$, whenever $g$ kills weights $m',\dots, n'$ for any integers $m',n'$ such that  $m\le m'\le n'\le n$ (cf. Definition \ref{dbacksim}(2)). Respectively, one can  define objects without weights  $m,\dots, n$ in this extended range similarly to Definition \ref{dkw}(2). 

Most of the statements involving the corresponding definitions (in our paper) will remain true for their extended versions. A significant part of the resulting "infinite analogues" are proven in this text anyway; so we leave to the reader to track the parallels with the "finite cases" and to formulate those "infinite versions" that are not included here. However, it makes sense to realize that the "decompositions" provided by Theorems \ref{tdegen}(II) and \ref{twkar} below avoid the corresponding weights; in particular, they are functorially determined by the corresponding object $M$ (see Theorem \ref{tprkw}(\ref{iwildef1}) and Remark \ref{rwild}(\ref{irwild1})). 
\end{rema}

Now we are able to improve the ("partial") conservativity property of weight complexes given by Theorem 3.3.1(V) of \cite{bws}. %However, this requires certain definitions.

\begin{defi}\label{dwdegen}
%1. We will call the elements of $\cap_{i\in \z}\cu_{w\le i}$ (resp. of  $\cap_{i\in \z}\cu_{w\ge i}$) {\it  right degenerate}  (resp. {\it left degenerate}).

%2. We will say that $w$ is {\it non-degenerate} if $\cap_{i\in \z}\cu_{w\le i}=\cap_{i\in \z}\cu_{w\ge i}=\ns$.
%3. 
We will say that $M\in \obj \cu$ is  {\it $w$-degenerate} (or weight-degenerate) if  $t(M)$ is zero (in $\kw(\hw)$ and so also in $K(\hw)$). %$ 0$.
\end{defi}

\begin{theo}\label{tdegen}
Let $g:M\to M'$ be a $\cu$-morphism, $n\in \z$. Then the following statements are valid.

I.1. $t(g)$ is an isomorphism if and only if %$t(\co(g))=0$. % 
 $\co(g)$ is a $w$-degenerate object.

2. Any extension of a left   $w$-degenerate  object of $\cu$ by a  right $w$-degenerate one is $w$-degenerate.
 %If $M$ is a weight-degenerate object then $t(M)=0$.

3. If $M$ is %essentially $w$-negative (resp. essentially $w$-positive) 
an extension of a  left $w$-degenerate  object by an element of $\cu_{w\le n}$ (resp. is an extension of an element of $\cu_{w\ge n}$ by a right $w$-degenerate  object) 
then  $t(M)\in K(\hw)_{\wstu\le n}$ (resp. $t(M)\in K(\hw)_{\wstu\ge n}$; see Remark \ref{rstws}(1)).
%$t(M)\backsim^w 0$  (resp. $t(M)\backsim_w 0$; see Remark \ref{rwc}(\ref{irwc6})). 
%is equivalent to a complex concentrated in non-negative degrees (resp. in non-positive degrees).

II. Assume that  $\cu$ is weight-Karoubian.

 1. Then %the statements converse to assertions I.2 and I.3 are also valid. Being more precise,     $M$ is $w$-degenerate if and only  $t(M)=0$ 
 $M$ is $w$-degenerate if and only if $M$  is an extension of a left $w$-degenerate  object by a right $w$-degenerate one (cf. assertion I.2). % and 
 %$t(M)\backsim^w 0$  (resp. $t(M)\backsim_w 0$)

 2. $t(M)\in K(\hw)_{\wstu\le n}$ (resp. $t(M)\in K(\hw)_{\wstu\ge n}$)  if and only if  $M$ is an extension of a  left $w$-degenerate   object by an element of $\cu_{w\le n}$ (resp. is an extension of an element of $\cu_{w\ge n}$ by a  right $w$-degenerate   object; cf. assertion I.3).
\end{theo}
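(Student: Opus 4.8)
The plan is to treat Part I with the two standard engines of the weight-complex formalism and then to reduce the converses in Part II to the ``avoiding weights'' language via Theorem \ref{tpwckill}(\ref{iwckill4}), so that no homotopy (co)limits are needed. For Part I the key tools are Proposition \ref{pwt}(\ref{iwcex}) (which lifts the $t$-image of a distinguished triangle to a genuine $K(\hw)$-triangle), the conservativity of $K(\hw)\to\kw(\hw)$ from Lemma \ref{lwwh}(\ref{iwhecat}), and Proposition \ref{pwt}(\ref{iwcons}). For I.1 I would lift $t(M)\to t(M')\to t(\co(g))$ to a $K(\hw)$-triangle; since the quotient functor $K(\hw)\to\kw(\hw)$ is conservative, $t(g)$ is a $\kw(\hw)$-isomorphism if and only if its lift is a $K(\hw)$-isomorphism, which holds exactly when the cone (lifting $t(\co(g))$) is a zero object of $K(\hw)$, i.e. when $\co(g)$ is $w$-degenerate (here the implication $t(\co(g))=0$ in $\kw(\hw)\Rightarrow t(\co(g))=0$ in $K(\hw)$ is the conservativity remark built into Definition \ref{dwdegen}). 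For I.2 and I.3 I would apply the same lifting to the defining triangle $X\to M\to Y\to X[1]$: Proposition \ref{pwt}(\ref{iwcons}) gives $t=0$ on the (left or right) $w$-degenerate terms and places $t(X)$ in the appropriate $K(\hw)_{\wstu\le n}$ (resp. $K(\hw)_{\wstu\ge n}$) class; vanishing of the third vertex of the lifted triangle then forces $t(M)=0$ in I.2 and $t(M)\cong t(X)$ (hence membership in the same $\wstu$-class, which is isomorphism-closed) in I.3.

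In Part II the ``if'' directions are just I.2 and I.3, so the content is the two converses, and here I would use weight-Karoubianness decisively. First I would translate the hypothesis on $t(M)$ into a statement about avoiding weights through Theorem \ref{tpwckill}(\ref{iwckill4}): if $t(M)=0$ then its zero representative complex has vanishing terms in every degree, so $M$ avoids weights $m,\dots,n$ for \emph{all} finite $m\le n$; and if $t(M)\in K(\hw)_{\wstu\le n}$, then (by the $\wstu$-convention of Remark \ref{rstws}(1)) $t(M)$ is homotopy equivalent to a complex concentrated in degrees $\ge -n$, whose vanishing in degrees $\le -(n+1)$ yields, for every $N>n$, that $M$ avoids weights $n+1,\dots,N$.

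The construction itself would avoid any limiting process. I would fix a single ordinary $n$-weight decomposition $w_{\le n}M\to M\to w_{\ge n+1}M$ and claim that under the hypothesis of II.2 the term $w_{\ge n+1}M$ is left $w$-degenerate. To see this, for an arbitrary $N>n+1$ I invoke Theorem \ref{tprkw}(\ref{iwildef2}) for the finite range $n+1,\dots,N-1$ to produce a decomposition avoiding those weights; by Theorem \ref{tprkw}(\ref{iwildef1}) it is in particular an $n$-weight decomposition whose right term lies in $\cu_{w\ge N}$. Since any two $n$-weight decompositions of $M$ are isomorphic (the standard same-level uniqueness coming from Proposition \ref{pbw}(\ref{icompl})), the fixed $w_{\ge n+1}M$ is isomorphic to that term and hence lies in $\cu_{w\ge N}$; as $\cu_{w\ge N}$ is retraction-closed and $N$ is arbitrary, $w_{\ge n+1}M\in\cap_i\cu_{w\ge i}$ is left $w$-degenerate, while $w_{\le n}M\in\cu_{w\le n}$. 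This proves the converse of II.2, and the dual argument (applied to $\cu^{op}$, cf. Proposition \ref{pbw}(\ref{idual})) gives the ``right'' case. For II.1 I would run the same argument on both sides of the $0$-weight decomposition $w_{\le 0}M\to M\to w_{\ge 1}M$: since $t(M)=0$ makes $M$ avoid every finite range, the right term is left $w$-degenerate and the left term is right $w$-degenerate, exhibiting $M$ as the required extension.

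The hard part will be precisely this infinite-range converse in Part II. The naive route, taking a homotopy limit of the inverse system of deep left-truncations and a homotopy colimit of the corresponding right-truncations, is unavailable because $\cu$ is not assumed to admit countable (co)products. The resolution I am proposing is the observation that one fixed same-level weight truncation already absorbs the degeneracy, once one knows it is isomorphic to the right (resp. left) term of arbitrarily deep avoiding-decompositions; the only delicate ingredient then is the standard isomorphism of any two weight decompositions at a fixed level, which I would extract from Proposition \ref{pbw}(\ref{icompl}).
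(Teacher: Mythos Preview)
Your treatment of Part I is correct and matches the paper's proof.

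In Part II there is a genuine gap. You fix an \emph{arbitrary} $n$-weight decomposition $w_{\le n}M\to M\to w_{\ge n+1}M$ and then assert that ``any two $n$-weight decompositions of $M$ are isomorphic (the standard same-level uniqueness coming from Proposition \ref{pbw}(\ref{icompl}))''. This is false: Proposition \ref{pbw}(\ref{icompl}) only gives uniqueness of the comparison map when $m<n$, and same-level weight decompositions are explicitly non-canonical (Remark \ref{rstws}(2)). Concretely, if $M\in\cu_{w\le n}$ and $0\neq N\in\cu_{w=n}$, then both $M\to M\to 0$ and $M\oplus N\to M\to N[1]$ are $n$-weight decompositions of $M$, yet $N[1]\in\cu_{w=n+1}$ is not left $w$-degenerate. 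So your fixed $w_{\ge n+1}M$ need not land in $\cu_{w\ge N}$ for large $N$.

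The fix, which is what the paper does, is to anchor not at an arbitrary $n$-weight decomposition but at the decomposition \emph{avoiding weight $n+1$} produced by Theorem \ref{tprkw}(\ref{iwildef2}) (this exists since your hypothesis already gives that $M$ avoids weight $n+1$). Every deeper avoiding decomposition (for weights $n+1,\dots,m$) is in particular a decomposition avoiding weight $n+1$, and \emph{those} are canonically isomorphic by Theorem \ref{tprkw}(\ref{iwildef1}); hence the right term of the fixed avoiding triangle lies in $\cu_{w\ge m+1}$ for all $m>n$ and is left $w$-degenerate. The point is that the uniqueness you need requires at least one forbidden weight between the two sides, and an ordinary $n$-weight decomposition does not provide that gap.
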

\begin{proof}
I.1. Immediate from Proposition \ref{pwt}(\ref{iwcex}) combined with the conservativity of the projection functor $K(\bu)\to \kw(\bu)$ (see Lemma \ref{lwwh}(\ref{iwhecat})).

2. If $N$ is  left or right $w$-degenerate   then  $t(N)=0$ according to Proposition \ref{pwt}(\ref{iwcons}). Hence the assertion follows from the previous one.

3. %Immediate from 
%The proof is a similar combination of assertion I.1  with  Proposition \ref{pwt}(\\ref{iwcons}).
Similarly to the previous assertion, it suffices to combine Proposition \ref{pwt}(\ref{iwcons}) with assertion I.1.

II. We investigate %the case of 
when $t(M)\in K(\hw)_{\wstu \le n}$. %??\backsim^w 0$. 

%This assumption on $M$ implies that 
 For any $m> n$ we have $\id_{t(M)}\backsim_{[-m,-n-1]}0$ (see  Lemma \ref{lwwh}(\ref{irwc6})). 
 
Since $\cu$ is weight-Karoubian, for any $n>0$ %there exists
Theorem \ref{tprkw}(\ref{iwildef2}) gives  a distinguished triangle $X_n\to M \to Y_m\to X_m[1]$
with $X_n\in \cu_{w\le n}$ and $Y_m\in \cu_{w\ge m+1}$. All of these triangles are isomorphic to the one for $m=n+1$ by the uniqueness statement in Theorem \ref{tprkw}(\ref{iwildef1}). Hence $Y_{n+1}$ is left $w$-degenerate and we obtain a triangle of the sort desired.

The proofs %in the two remaining cases
 of the two remaining statements are similar and left to the reader.
\end{proof}

\begin{rema}\label{rcwkar} 
1. Thus we get a precise answer to the question when $t(g)$ is an isomorphism in the weight-Karoubian case. In particular, the weight complex functor is conservative if  and only if $w$ is non-degenerate.

 %This statement is a significant improvement of Theorem 3.3.1(V) of \cite{bws} (that states that the restrictions of $t$ to the subcategories of left and right bounded objects of $\cu$ are conservative under this assumption).

2. To obtain the latter statement in the general case one should combine part I.1 of our theorem with %Corollary \ref{cwkar} 
 Theorem \ref{twkar} below.  %In  Theorem \ref{twkar} below %Corollary \ref{cwkar}  %we will %establish
%prove that part II of our proposition is also true "up to retracts" in any (not necessarily Karoubian) $\cu$.  
Moreover, that theorem contains several equivalent conditions for %$t(M)\backsim^w 0$, $t(M)\backsim_w 0$,
$t(M)\in K(\hw)_{\wstu\le 0}$ and $t(M)\in K(\hw)_{\wstu\ge 0}$ %, and $t(M)=0$ 
(for $\cu$ %being 
 that is not necessarily weight-Karoubian). % will be formulated. 
 However, those more general formulations %are more clumsy will 
 require the somewhat clumsy Definition \ref{dpkar}(4); we demonstrate that the corresponding modifications of Theorem \ref{tdegen}(II) are %necessary
	 unavoidable in %\S\ref{sindwd}%{del}
\S\ref{snkarex} below. 

% {dewpn} 
 %(and we will %generalize  prove that the corresponding modifications of Theorem \ref{tdegen}(II) are necessary in \S\ref{sindwd} below). 

3. Recall that $\cu$ is Karoubian whenever it is closed with respect to countable coproducts (triangulated categories satisfying this condition are called {\it countably smashing} in \cite{bsnew}) according to  Proposition 1.6.8 of \cite{neebook}. Hence it is quite reasonable to assume that $\cu$ is weight-Karoubian (see Proposition \ref{pbw}(\ref{ikwkar})). 
 %this property is rather reasonable to assume (at least) in the case where $\cu$ is "large".

4. Recall moreover that in the case where both $\cu$ and $\cu_{w\ge 0}$ are closed with respect to countable $\cu$-coproducts, $w$ is said to be {\it countably smashing} itself (in \cite{bsnew}; cf. Definition \ref{dbrown}(2) below). In this case for any $M\in \obj \cu$ there exists a essentially unique distinguished triangle $LM\to M\to RM\to LM[1]$ such that $RM$ is left weight-degenerate and $LM$ is left orthogonal to all    left weight-degenerate objects; see Theorem 4.1.3(1) of ibid. Thus this triangle coincides with the one provided by Theorem \ref{tdegen}(II) under the assumption $t(M)\in K(\hw)_{\wstu\le n}$. %in the case??!
\end{rema}

\subsection{On the relation to pure functors}\label{spure} %here or in \S4?!
We have just proved that the assumption that a $\cu$-morphism $g$ kills a given weight $m$ can be expressed in terms of $t(g)$. Now we use this statement to prove that this condition can be "detected" using pure functors (as defined in \S2.1 of \cite{bwcp}; %we will justify this terminology in Remark \ref{rchow}(2) 
this terminology was justified in Remark 2.1.3(3) of loc. cit.; cf. also Remark \ref{ray}(\ref{iray2}) below).  So, we recall some of the theory developed in ibid.

\begin{defi}\label{dpure}
Assume that  $\cu$ is endowed with a  weight structure $w$.

We will say that a (co)homological functor $H$ from $\cu$ into an abelian category $\au$ is {\it $w$-pure} (or just pure if the choice of $w$ is clear) if $H$ kills both $\cu_{w\ge 1}$ and $\cu_{w\le -1}$.
\end{defi}

%In the current paper we will prefer cohomological pure functors???!

\begin{theo}\label{tpure}
1. Let $\ca:\hw\to \au$ be an additive functor, where $\au$ is any abelian category. For an object $M$ of $\cu$ we will write  $t(M)=(M^j)$; we  set $H(M)=H^{\ca}(M)$ to be the zeroth homology of the complex $(\ca(M^{j}))$. Then $H(-)$ yields a homological functor. %$H:\cu\to \au$  that does not depend on the choices of weight complexes. 
  Moreover, the assignment $\ca\mapsto H^\ca$ is natural in $\ca$. 

2. The correspondence $\ca\to H^\ca$ is an equivalence of categories between the following (not necessarily locally small) categories of functors: $\adfu(\hw,\au)$ and the category of pure homological functors from $\cu$ into $\au$.

3. Dually, the correspondence sending a contravariant functor $\ca'$ into the functor $H_{\ca'}$ that maps $M$ into the  zeroth homology of the complex $(\ca'(M^{-j}))$ (see assertion 1) gives an equivalence of categories between $\adfu(\hw\opp,\au)$ and the category of pure cohomological functors from $\cu$ into $\au$.

4. A representable functor $\cu(-,M)$ is pure if and only if $M\in (\cu_{w\ge 1}\cup \cu_{w\le -1})\perpp$.

\end{theo}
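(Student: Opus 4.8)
The plan is to prove part~4 by unwinding Definition~\ref{dpure} directly and rewriting it in the orthogonality notation of \S\ref{snotata}; the assertion is essentially a reformulation, so the work consists entirely of careful bookkeeping. First I would observe that the representable functor $\cu(-,M)$ is cohomological in the sense of \S\ref{snotata}: it is additive and contravariant, and the standard long exact sequence attached to a distinguished triangle shows that it converts distinguished triangles into long exact sequences. Hence Definition~\ref{dpure} applies, and $\cu(-,M)$ is pure precisely when it kills both $\cu_{w\ge 1}$ and $\cu_{w\le -1}$.

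Next I would translate the condition of killing a class into the vanishing of morphism groups. Since $\cu(-,M)$ is \emph{contravariant}, its vanishing on a subclass $D\subset \obj\cu$ means that $\cu(X,M)=\ns$ for every $X\in D$, i.e., that $X\perp M$ for all such $X$; by the definition of the right orthogonal class this is exactly $M\in D^{\perp}$. Applying this observation with $D=\cu_{w\ge 1}$ and with $D=\cu_{w\le -1}$ in turn, purity of $\cu(-,M)$ becomes equivalent to the conjunction of $M\in (\cu_{w\ge 1})^{\perp}$ and $M\in (\cu_{w\le -1})^{\perp}$, that is, to $M\in (\cu_{w\ge 1})^{\perp}\cap (\cu_{w\le -1})^{\perp}$.

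Finally I would record the elementary identity $(\cu_{w\ge 1})^{\perp}\cap (\cu_{w\le -1})^{\perp}=(\cu_{w\ge 1}\cup \cu_{w\le -1})\perpp$, valid for any two subclasses of $\obj\cu$: an object is right orthogonal to a union if and only if it is right orthogonal to every member of that union, which in turn holds exactly when it lies in both right orthogonals simultaneously. Combining this with the previous equivalence yields the desired characterization.

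The only points that demand attention --- and hence the closest thing to an obstacle here --- are the contravariance of $\cu(-,M)$ (so that purity constrains $M$ from the \emph{left}, placing it in right-orthogonal classes $D^{\perp}$ rather than in left-orthogonal ones) and the passage between the intersection of two orthogonals and the orthogonal of their union. Neither is deep; beyond these, the argument is a direct application of the definitions, and no appeal to the weight complex functor or to the earlier parts of the theorem is needed.
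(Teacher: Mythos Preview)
Your proof of part~4 is correct and follows exactly the paper's approach: the paper simply states that assertion~4 ``is immediate from the definition of purity,'' and you have carefully unpacked precisely that immediacy. Note that parts~1--3 are handled in the paper by citation to \cite{bwcp} and duality, so there is nothing further to supply there.
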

\begin{proof}
Assertions 1 and 2 are contained in Theorem 2.1.2 of \cite{bwcp}, and assertion 3 is their dual (cf. Proposition \ref{pbw}(\ref{idual}) or Remark 2.1.3(1) of ibid.). Lastly, assertion 4 is immediate from the definition of purity. 
\end{proof}

Now we will prove that killing a given weight can be "detected" by means of pure functors. To prove that in certain cases representable functors are sufficient for this purpose, we recall the following definition.

\begin{defi}\label{dbrown}
Assume that  $\cu$ is closed with respect to small coproducts.

1. We will say that $\cu$ satisfies the  {\it Brown representability} property if  any cohomological functor from $\cu$ into $\ab$ that %respects ($\cu\opp$)-coproducts 
converts $\cu$-coproducts into products of groups is representable in $\cu$.

2. We will say that a weight structure $w$ on $\cu$ is {\it  smashing} if %$\cu$ is $\al$-smashing (resp., smashing)  and 
the class $\cu_{w\ge 0}$ is closed with respect to (small) $\cu$-coproducts %(as well; 
 (cf. Proposition \ref{pbw}(\ref{icoprod})). 
\end{defi}

\begin{pr}\label{puredetkw}
Assume that $\cu$ is endowed with a weight structure $w$, $g:M\to N$ is a $\cu$-morphism, and $j\in \z$.

I.  Then the following conditions are equivalent.

1. $g$ kills weight $j$. %See Definition \ref{dkw}?

2. $H^j_{\ca}(g)=0$ for any pure cohomological functor $H_{\ca}$ as above.

3.  $H^j_{\ca}(g)=0$ for any pure cohomological functor $H_{\ca}$ corresponding to a contravariant additive functor $\ca$ from $\hw$ into $\ab$ that converts all small $\hw$-coproducts into products of groups.

4. $H_j^{\ca}(g)=0$ for any pure homological functor $H^{\ca}$.

II. Assume in addition that $\cu$ satisfies the  Brown representability property and $w$ is smashing. Then the functors $H_{\ca}$  as in condition I.3 are precisely the pure representable ones.
\end{pr}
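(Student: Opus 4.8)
The plan is to move everything to the level of the weight complex and the relation $\backsim_{[-j,-j]}$, and then to detect that relation by a single, cleverly chosen pure functor. By Theorem \ref{tpwckill}(\ref{iwckill1}), condition I.1 is equivalent to $t(g)\backsim_{[-j,-j]}0$. Moreover, by the construction underlying Theorem \ref{tpure} together with Lemma \ref{lwwh}(\ref{iwhefu}), each pure functor factors through $t$: for a covariant additive $\ca$ the value $H_j^{\ca}(M)$ is the $(-j)$-th homology of the complex obtained by applying $\ca$ termwise to $t(M)$ (and dually $H^j_{\ca}(M)$ is the corresponding $(-j)$-th cohomology for contravariant $\ca$; the indexing matches because $t$ commutes with shifts, see Proposition \ref{pwt}(\ref{irwcsh})), and $H_j^{\ca}(g)$ is the map induced by $t(g)$ on this (co)homology. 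I would organize Part I as the cycle I.1$\Rightarrow$I.2$\Rightarrow$I.3$\Rightarrow$I.1 together with I.1$\Leftrightarrow$I.4. Here I.2$\Rightarrow$I.3 is trivial (I.3 quantifies over a subclass of the functors in I.2), while the easy implications I.1$\Rightarrow$I.2 and I.1$\Rightarrow$I.4 follow at once from the reduction: by Lemma \ref{lwwh}(\ref{irwc4}) I may pick a representative $m_0$ of $t(g)$ in $K(\hw)$ with $m_0^{-j}=0$, so that $\ca(m_0)$ has vanishing $(-j)$-th term and induces $0$ on $(-j)$-th (co)homology, whereas $\ca(m_0)$ and $\ca(t(g))$ agree on (co)homology by Lemma \ref{lwwh}(\ref{iwhefu}).

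The substance is the converse, which I would deduce from a self-contained statement about the additive category $\hw$: for a chain map $f\colon C^\bullet\to D^\bullet$ and $k\in\z$ one has $f\backsim_{[k,k]}0$ as soon as $\ca(f)$ induces the zero map $H_k(\ca(C^\bullet))\to H_k(\ca(D^\bullet))$ for a single, suitably chosen additive $\ca$. Unwinding Lemma \ref{lwwh}(\ref{irwc4}), $f\backsim_{[k,k]}0$ means exactly that $f^k$ lies in the subgroup $B=\imm(d_D^{k-1}\circ-)+\imm(-\circ d_C^{k})\subseteq\hw(C^k,D^k)$, so I must detect $f^k\notin B$. The naive attempt to pair $t(g)$ with representable functors fails, since these only probe $f^k$ on ``cycles'' and genuinely cannot see $B$ (one checks this already on $\hw=\ab$ with $d_C^k$ a multiplication by $2$). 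The remedy is to apply an additive $\ca$ annihilating the outgoing differentials $d_C^{k}$ and $d_D^{k}$: then every element is a cycle in degree $k$, the summand $-\circ d_C^{k}$ of $B$ dies automatically, the summand $d_D^{k-1}\circ-$ becomes the degree-$k$ boundaries, and the induced map on $H_k$ is precisely the one given by the class of $f^k$ modulo $B$. Concretely I would pass to the additive quotient of $\hw$ by the ideal generated by $d_C^{k}$ and $d_D^{k}$ and then embed faithfully into a category of additive functors into $\ab$. The first obstacle is exactly this construction: one must check that the quotient is ``faithful enough'', i.e. that it annihilates precisely the relevant part of $B$ and nothing more, so that a nonzero $f^k\bmod B$ survives.

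This covariant construction settles I.4$\Rightarrow$I.1. For I.3$\Rightarrow$I.1 note first that when $\hw$ has no infinite coproducts condition I.3 coincides with condition I.2 (every additive functor respects finite coproducts), so nothing new is needed; the genuine difficulty arises only when $\hw$ has infinite coproducts, where the detecting functor must in addition convert coproducts into products. I would seek it among functors of the form $\mathrm{Hom}_{\z}(G(-),\q/\z)$, with $G$ a coproduct-preserving covariant additive functor annihilating the outgoing differentials and $\q/\z$ serving as a cogenerator of $\ab$ to retain detection. Exhibiting such a functor inside the coproduct-to-product class, and thereby showing that this restricted class still sees $f^k\bmod B$, is the sharper obstacle of Part I.

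For Part II I would argue as follows. Given a pure cohomological $H_{\ca}$ with $\ca$ as in I.3, I first check that $H_{\ca}$ converts $\cu$-coproducts into products. Since $w$ is smashing and $\cu_{w\le 0}$ is always closed under coproducts (Proposition \ref{pbw}(\ref{icoprod})), an $n$-weight decomposition of $\coprod_{\alpha}M_{\alpha}$ can be chosen as the coproduct of $n$-weight decompositions of the $M_{\alpha}$, whence $t(\coprod_{\alpha}M_{\alpha})$ is the termwise coproduct of the $t(M_{\alpha})$ in $\kw(\hw)$; applying $\ca$ (which turns these coproducts into products) and using that products are exact in $\ab$, I obtain $H_{\ca}(\coprod_{\alpha}M_{\alpha})=\prod_{\alpha}H_{\ca}(M_{\alpha})$. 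Brown representability then gives $H_{\ca}\cong\cu(-,Y)$, and Theorem \ref{tpure}(4) shows $Y\in(\cu_{w\ge 1}\cup \cu_{w\le -1})\perpp$, i.e. $H_{\ca}$ is pure representable. Conversely, any pure representable $\cu(-,Y)$ has $Y\in(\cu_{w\ge 1}\cup \cu_{w\le -1})\perpp$ by Theorem \ref{tpure}(4) and, under the equivalence of Theorem \ref{tpure}(3), corresponds to $\ca=\hw(-,Y)$, a contravariant representable functor on $\hw$, which converts all coproducts into products and hence lies in the class of I.3. The only genuine verification here is the additivity of $t$ for infinite coproducts under the smashing hypothesis.
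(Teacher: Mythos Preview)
Your overall strategy---reducing to the weight complex via Theorem \ref{tpwckill}(\ref{iwckill1}) and then detecting $t(g)\backsim_{[-j,-j]}0$ by a well-chosen pure functor---matches the paper's, and your treatment of the easy implications is fine. The genuine gap is in the hard direction I.3$\Rightarrow$I.1 (and your parallel covariant I.4$\Rightarrow$I.1). You propose to kill the outgoing differentials $d_C^k,d_D^k$ by passing to the quotient of $\hw$ by the two-sided ideal they generate, and you correctly flag but do not resolve the obstacle: that ideal, restricted to $\hw(C^k,D^k)$, can be strictly larger than $B=\imm(d_D^{k-1}\circ-)+\imm(-\circ d_C^k)$. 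Concretely, take $C^k=A\oplus A$, $C^{k+1}=A$, $d_C^k$ the first projection, $D^{k-1}=D^{k+1}=0$, $D^k=A$. Then $B=\imm(-\circ d_C^k)$ is the first summand of $\hw(C^k,D^k)\cong\hw(A,D^k)^2$, but precomposing $d_C^k$ with the swap automorphism of $C^k$ shows the ideal contains the second summand as well; hence $f^k=(0,\id_A)$ lies in the ideal though $f\not\backsim_{[k,k]}0$. So your quotient may annihilate $f^k$ even when it should not, and no detecting functor has been exhibited. The $\mathrm{Hom}_{\z}(G(-),\q/\z)$ variant inherits the same lacuna, since you still need to produce $G$.

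The paper bypasses all of this with a single explicit choice. After reducing to $j=0$, write $t(N)=(N^i)$ with boundaries $d_N^i$ and set
\[
\ca(X)=\cok\bigl(\hw(X,N^{-1})\xrightarrow{\,d_N^{-1}\circ-\,}\hw(X,N^0)\bigr).
\]
This $\ca$ sends $\hw$-coproducts to products (each $\hw(-,N^i)$ does, and cokernels of maps between products of abelian groups are computed componentwise), so it lies in the class of I.3. The class $\theta$ of $\id_{N^0}$ is a cycle in $H_{\ca}(N)$, and $H_{\ca}(g)(\theta)$ is the class of $g^0$ in $\ca(M^0)$ modulo $\imm(\ca(d_M^0))$; unwinding, this vanishes precisely when $g^0=d_N^{-1}\circ\chi+\psi\circ d_M^0$ for some $\chi,\psi$, i.e.\ precisely when $t(g)\backsim_{[0,0]}0$. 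No quotient category and no faithfulness check are needed---the functor is tailored to the target complex $t(N)$ rather than built to kill prescribed morphisms. The paper then obtains I.1$\Leftrightarrow$I.4 by categorical duality with I.1$\Leftrightarrow$I.2, rather than by a separate covariant construction.

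For Part II the paper simply invokes \cite[Proposition 2.3.2(8)]{bwcp}. Your direct argument is along the right lines; the one step that deserves care is that $t$ sends $\cu$-coproducts to termwise coproducts in $\kw(\hw)$ under the smashing hypothesis, which is essentially the content of that reference.
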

\begin{proof}
I. %Obviously, 
%We can assume that $j=0$ (see  Remark \ref{rwc}(\ref{irwcsh})). %easily seen?! Make this a part of {pwt}?!
%According to 
 It is easily seen that Proposition \ref{pwt}(\ref{irwcsh}) %, we can 
 allows us to assume that $j=0$.

Next, condition 1 implies condition 2 according to Proposition \ref{pwt}(\ref{iwhefu}) combined with %Proposition 
 Theorem \ref{tpwckill}(\ref{iwckill1}). Moreover, condition 2 obviously implies condition 3.

%It remains to 
Now let us prove that condition 3 (in the case $j=0$) implies that $g$ kills weight $0$. Thus we should verify the following: if $g$ does not kill weight $0$ then there exists $\ca$ as in assertion 3 such that $H_{\ca}(g)\neq 0$. %Moreover, Proposition \ref{pwckill}(\ref{iwckill1}) allows us to pass to weight complexes??
%Let us fix a choice $(N^i)$ of a weight complex for $N$; 
 For $t(N)=(N^i)$ and $d_N^i$ being the boundary morphisms of this complex we take the functor $\ca$ that sends any $X\in \cu_{w=0}$ into $\cok(\hw(X,-)(d^{-1}_N)):\hw(X,N^{-1})\to \hw(X,N^0)$. Obviously, this $\ca$ does convert $\hw$-coproducts into products in $\ab$. Hence it suffices to check that $H_{\ca}(g)\neq 0$.   

Now, $H_{\ca}(N)$ contains the element $\theta$ corresponding to $\id_{N^0}$. % (note that $d^0_N\circ d^{-1}_N=0$). 
 If $H_{\ca}(g)(\eta)=0$ then for %any choice $(M^i)$ of 
 $t(M)=(M^i)$ and $t(g)=(g^i)$ the element $\theta_M^0\in \ca(M^0)$ corresponding to %any choice of $g^0$ (here $t(g)=(g^i)$)
  $g^0$ belongs to the image of $\ca(M^1)$ (in $\ca(M^0)$).  This obviously implies that $t(g)\backsim_{[0,0]} 0$ %$t(g)\backsim_{[-m,-m]} 0$ %one summand only; is this possible??
(in the notation of Definition \ref{dbacksim}); thus it remains to apply %Proposition 
 Theorem \ref{tpwckill}(\ref{iwckill1})  once again.

Lastly, the equivalence of conditions I.1 and I.4 is just the categorical dual of the equivalence between I.1 and I.2.

II. This is Proposition 2.3.2(8) of \cite{bwcp}.
\end{proof}

\begin{rema}\label{rdetkw}
%1. Certainly, the categorical dual of this statement is also true, i.e., killing any weight can be detected by means of functors of the type $H^{\ca}$, where $\ca:\hw\to \ab$ is a functor respecting products. However, cohomological "detectors" of killing weights are more actual for our purposes below.
%2. This dual formulation 
 The equivalence of conditions I.1 and I.3 is closely related to Theorem 2.1 of \cite{barrabs}; the proof is similar is well. % (and its proof). 
\end{rema}

\section{On objects without weights in %non-weight-Karoubian
 general weighted categories, and a conservativity application} \label{snkar} 

In \S\ref{spkar}  we extend Theorem %s \ref{tprkw}(\ref{iwildef2}) and
  \ref{tdegen}(II) to the case where $\cu$ is not necessarily %just
	 weight-Karoubian. %These statements also enable us to generalize the latter theorem to arbitrary weight structures.  We also relate our results to intersections of "purely generated" subcategories. % (that are the main subject of \cite{binters}; cf. also \S4.3 of \cite{bsnew}).

In \S\ref{sprtwcons} we %prove that our results imply 
 apply our results to prove that certain weight-exact functors are "conservative up to weight-degenerate objects"; we also discuss the relation of this proposition to the corresponding results of \cite{wildcons} and \cite{bwcp}, and describe an interesting motivic example for it. %in contrast to??

In \S\ref{snkarex} we construct certain counterexamples to demonstrate that the modifications made in \S\ref{spkar} to %"adjust" the aforementioned results of the previous section 
 to generalize Theorem  \ref{tdegen}(II) cannot be avoided.

\subsection{On %killing weights in %weight-Karoubian extensions and generalizations of our results to 
 objects avoiding weights in not necessarily weight-Karoubian categories}\label{spkar}

To extend the results of the previous section to the case  of  a not (necessarily) weight-Karoubian $\cu$ we %need the following %We 
recall some definitions and results from \cite{bonspkar} (yet Definition \ref{dpkar}(4) is new).
%the central 
 %following definitions from \cite{bonspkar}.

\begin{defi}\label{dpkar}
1. We will call a triangulated category $\cu'$ an {\it  idempotent extension} of $\cu$ if  it contains $\cu$ and there exists a fully faithful exact functor  $\cu'\to \kar(\cu)$.\footnote{Recall that (according to Theorem 1.5 of \cite{bashli}) the category  $\kar(\cu)$ can be naturally endowed with the structure of a triangulated category so that the  natural embedding functor $\cu\to \kar(\cu)$ is exact. Hence
%The latter assumption is certainly equivalent 
$\cu'$ is an idempotent extension of $\cu$ if  and only if any object of $\cu'$ is a retract of some object of $\cu$.} %???and $\cu$ is dense (see \S\ref{snotata})  in $\cu'$.}.

2. We will say that a weight structure $w$  {\it extends} to an  idempotent extension $\cu'$ of $\cu$ %is {\it $w$-compatible} with $\cu$ 
 whenever there  exists a weight structure %$w$ for $\cu$ and
 $w'$ for $\cu'$ such that the embedding $\cu\to \cu'$ is weight-exact.  In this case we will call $w'$ an {\it extension} of $w$. %the?! cf. ? below?!

3. We will call  a weight-Karoubian category $(\cu',w')$ (see Definition \ref{dwso}(\ref{idwkar})) a   {\it weight-Karoubian extension} of $(\cu,w)$ if $\cu'$ is an  idempotent extension of $\cu$ and $w'$ is the extension of $w$ to it (cf. Proposition \ref{ppkar}(1)).

4.  We will say that %For 
 an object $M$ of $\cu$ % (where $\cu$ is endowed with a weight structure $w$) we will say that $M$ 
 is {\it essentially $w$-positive}  (resp. {\it essentially $w$-negative}) if it is a retract of some $M'\in \obj \cu$ such that  $M'$ is an extension of an element of $\cu_{w\ge 0}$ by a right $w$-degenerate object of $\cu$  (resp. $M'$ is an extension of  a left $w$-degenerate object of $\cu$ by an element of $\cu_{w\le 0}$; see Definition \ref{dwso}(\ref{ilrd})). 
\end{defi}

%Now we recall those results of ibid. that are relevant for the current paper.

\begin{pr}\label{ppkar}
1. Let $\cu'$ be an  idempotent extension of $\cu$ such that  $w$  extends to a weight structure $w'$ on it. Then  $\cu'_{w\ge 0}$ (resp.  $\cu'_{w'\le 0}$, resp.  $\cu'_{w'= 0}$)  is the retraction-closure of  $\cu_{w\ge 0}$ (resp.  $\cu_{w\le 0}$, resp.  $\cu_{w= 0}$) in $\cu'$.  In particular, $w$ is the restriction of $w'$ to $\cu$.

2. Any $(\cu,w)$ possesses a weight-Karoubian extension.

\end{pr}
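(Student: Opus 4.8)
The plan is to prove assertion~1 directly from the weight structure axioms and then to obtain assertion~2 by taking $\cu'=\kar(\cu)$ and equipping it with the evident retraction-closures. For assertion~1 I would first dispatch the easy inclusions: since $\cu\to\cu'$ is weight-exact we have $\cu_{w\ge0}\subset\cu'_{w'\ge0}$, and $\cu'_{w'\ge0}$ is retraction-closed by axiom~(i) of Definition~\ref{dwstr}; as $\cu_{w\ge0}$ is additive (Proposition~\ref{pbw}(\ref{iext})) this already gives $\kar_{\cu'}(\cu_{w\ge0})\subset\cu'_{w'\ge0}$. The substance is the reverse inclusion. Given $M\in\cu'_{w'\ge0}$, the definition of an idempotent extension makes $M$ a retract of some $N\in\obj\cu$, say via $i\colon M\to N$, $p\colon N\to M$ with $p\circ i=\id_M$. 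I would fix a weight decomposition $w_{\le-1}N\stackrel{f}{\to}N\stackrel{q}{\to}w_{\ge0}N$ of $N$ in $\cu$. Because $w_{\le-1}N\in\cu'_{w'\le-1}$ and $M\in\cu'_{w'\ge0}$, orthogonality gives $\cu'(w_{\le-1}N,M)=\ns$, so $p\circ f=0$ and $p$ factors as $p=p'\circ q$. Then $p'\circ(q\circ i)=p\circ i=\id_M$ realizes $M$ as a retract of $w_{\ge0}N\in\cu_{w\ge0}$, whence $M\in\kar_{\cu'}(\cu_{w\ge0})$. The claim for $\cu'_{w'\le0}$ is exactly dual via Proposition~\ref{pbw}(\ref{idual}).

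The case of $\cu'_{w'=0}=\cu'_{w'\ge0}\cap\cu'_{w'\le0}$ then follows by combining the two: $\kar_{\cu'}(\cu_{w=0})\subset\cu'_{w'=0}$ since the right-hand class is retraction-closed, while conversely an $M\in\cu'_{w'=0}$ is first a retract of some $R\in\cu_{w\ge0}$ by the argument above, and applying the dual argument to $R$ exhibits $M$ as a retract of $w_{\le0}R$, which lies in $\cu_{[0,0]}=\cu_{w=0}$ by Proposition~\ref{pbw}(\ref{iwd0}). Finally, for the restriction statement I would note that if $M\in\obj\cu$ happens to lie in $\cu'_{w'\ge0}$, then the retraction onto $w_{\ge0}N$ just constructed consists of $\cu$-morphisms ($\cu$ being full in $\cu'$), so $M\in\cu_{w\ge0}$ by axiom~(i); the dual statement gives $\cu'_{w'\le0}\cap\obj\cu=\cu_{w\le0}$, so that $w$ is the restriction of $w'$ to $\cu$.

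For assertion~2 I would set $\cu'=\kar(\cu)$, triangulated with $\cu\to\cu'$ exact by \cite{bashli}, and define $w'$ by $\cu'_{w'\le0}=\kar_{\cu'}(\cu_{w\le0})$ and $\cu'_{w'\ge0}=\kar_{\cu'}(\cu_{w\ge0})$. Axiom~(i) is built in; axiom~(ii) follows since translation is an autoequivalence commuting with retraction-closure (apply it to $\cu_{w\le0}\subset\cu_{w\le0}[1]$ and $\cu_{w\ge0}[1]\subset\cu_{w\ge0}$); and axiom~(iii) passes to retracts, since any $\cu'$-morphism from a retract of $X_0\in\cu_{w\le0}$ to a retract of $Y_0\in\cu_{w\ge1}$ is a retract of an element of $\cu'(X_0,Y_0)=\cu(X_0,Y_0)=\ns$. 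Granting these together with axiom~(iv) below, $w'$ extends $w$ and $\cu'$ is an idempotent extension of $\cu$ by construction, while $\cu'=\kar(\cu)$ is Karoubian and hence weight-Karoubian by Proposition~\ref{pbw}(\ref{ikwkar}); thus $(\cu',w')$ is a weight-Karoubian extension of $(\cu,w)$, and assertion~1 identifies its weight classes as the stated retraction-closures.

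The hard part will be the one remaining axiom~(iv): the existence of weight decompositions in $\kar(\cu)$. Writing an object as $(N,e)$ for an idempotent $e$ on $N\in\obj\cu$, one lifts $e$ through Proposition~\ref{pbw}(\ref{icompl}) to compatible endomorphisms of the truncations of a weight decomposition of $N$; the obstruction is that these endomorphisms are only idempotent modulo morphisms factoring through a weight truncation of the complementary part, so they must be corrected to genuine idempotents forming a morphism of triangles before splitting them in $\kar(\cu)$ yields the desired decomposition of $(N,e)$. This correction is the delicate input that I would draw from \cite{bonspkar} rather than reprove here.
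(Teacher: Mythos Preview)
Your argument for assertion~1 is correct and in fact spells out what the paper merely cites from \cite{bonspkar}; the factorization of the retraction through the appropriate weight truncation via orthogonality is exactly the right mechanism, and the two-step argument for $\cu'_{w'=0}$ using Proposition~\ref{pbw}(\ref{iwd0}) is clean.

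Assertion~2, however, has a genuine gap. You take $\cu'=\kar(\cu)$ and attempt to verify axiom~(iv) for the retraction-closures of $\cu_{w\le0}$ and $\cu_{w\ge0}$, deferring the ``delicate correction'' of the lifted idempotents to \cite{bonspkar}. But that correction does not exist in general: as the paper itself records in Remark~\ref{rwkarloc}(3) (referring to \S3.1 of \cite{bonspkar}), a weight structure $w$ need \emph{not} extend to the full Karoubi envelope $\kar(\cu)$. So axiom~(iv) can simply fail for your candidate $(\cu',w')$, and \cite{bonspkar} does not supply the input you are invoking --- it supplies a counterexample instead.

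The construction that actually works, and that the paper cites as Theorem~2.2.2(III.1) of \cite{bonspkar}, does not take all of $\kar(\cu)$: one takes a smaller idempotent extension, for instance the smallest strict triangulated subcategory of $\kar(\cu)$ containing both $\cu$ and $\kar(\hw)$ (cf.\ Remark~\ref{rwkarloc}(2)). On this subcategory one can verify weight decompositions, essentially because every object is built from objects of $\cu$ and retracts of objects of $\hw$, and the latter admit trivial weight decompositions. Your outline would be repaired by replacing $\kar(\cu)$ with this smaller $\cu'$ and then checking axiom~(iv) there; the idempotent-correction difficulty you identified is precisely what forces this restriction.
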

\begin{proof}
1. This is Theorem 2.2.2(I.1) of ibid.

2. The statement is given by part III.1 of loc. cit.
\end{proof}

%The following observations are crucial for this section.

Now we %use Proposition \ref{pwkar} for deducing
 %prove a certain version of 
 extend Theorem  \ref{tdegen}(II) %that would be valid for a
 to  not (necessarily) weight-Karoubian weighted categories.

\begin{theo}\label{twkar}
Let $M\in \obj \cu$. 

I. %Then the following statements are valid.
The following conditions are equivalent.
\begin{enumerate}
\item\label{icwk1} %$M$ is weight-degenerate 
%$t(M)=0$ 
$M$ is weight-degenerate (resp. $t(M)\in K(\hw)_{\wstu\le 0}$). %is homotopy equivalent to a complex concentrated in non-negative degrees).

\item\label{icwk2}  $M$ can be presented as an extension of a left $w'$-degenerate  object of $\cu$ by a  right $w'$-degenerate   one (resp. by an element of $\cu'_{w'\le 0}$) in some weight-Karoubian extension $(\cu',w')$ of $(\cu,w)$.

\item\label{icwk3} Such a decomposition of $M$ exists in any weight-Karoubian extension of $\cu$.

\item\label{icwk4} $M$ is a $\cu$-retract of an extension $M'$ of a left $w$-degenerate   object of $\cu$ by a  right weight-degenerate   one (resp.  $M$ is essentially $w$-negative in the sense of  Definition \ref{dpkar}(4)).  %Definition \ref{dewpn}.
%a $\cu$-retract of an extension of a left $w$-degenerate   object of $\cu$ by a  right weight-degenerate   one (resp. by  an element of $\cu_{w\le 0}$, i.e., $M$ is essentially $w$-negative in the sense of Definition \ref{ddeg}(\ref{iepn})).

\item\label{icwk5} The object $M\bigoplus M[-1]$ is an extension %of this sort. % of a degenerate above object 
 a left $w$-degenerate  object of  $\cu$ by a  right $w$-degenerate   one (resp. by  an element of $\cu_{w\le 0}$).

\item\label{icwkw} $M$ is without weight $i$ for all $i\in \z$ (resp. for all $i>0$).

\item\label{icwpuh} $H_i(M)=0$ for all $i\in \z$  (resp. for all $i>0$) and pure homological $H$. %homological? resp?!

\item\label{icwpuc} $H_{\ca}^i(M)=\ns$ (see Theorem \ref{tpure}(3)  for the notation)  for all $i\in \z$  (resp. for all $i>0$) and all additive functors $\ca:\hw\opp\to \ab$ that respect products. 

\end{enumerate}

II. The following conditions are equivalent as well.

\begin{enumerate}
\item\label{icwk6} $t(M)\in K(\hw)_{\wstu\ge 0}$. %\backsim_w 0$. %$  is  homotopy equivalent to a complex concentrated in non-positive  degrees.

%\item\label{icwk6r} $t(M)$ is a retract of a complex  concentrated in non-positive  degrees (in $K(\hw)$). 

\item\label{icwkwp} $M$ is without weight $i$ for all  $i<0$.

\item\label{icwk7}  $M$ can be presented as an extension of an element of $\cu'_{w'\ge 0}$ by a  right weight-degenerate  object   in some weight-Karoubian extension $\cu'$ of $\cu$.

\item\label{icwk8} Such a decomposition of $M$ exists in any weight-Karoubian extension of $\cu$.

\item\label{icwk9} $M$ is essentially $w$-positive in the sense of Definition \ref{dpkar}(4). %Definition \ref{dewpn}. %\ref{ddeg}(\ref{iepn}).  %a $\cu$-retract of an extension %of  a degenerate above object of $\cu$ by a degenerate  below one
%(resp. of 
%of an element of $\cu_{w\ge 0}$ by a  right degenerate  object.

\item\label{icwk0} The object $M\bigoplus M[1]$ is an extension of an element of $\cu_{w\ge 0}$ by a  right $w$-degenerate  object. %this sort.

\item\label{icwpuhp} $H_i(M)=0$ for all $i<0$ and pure homological $H$. %homological? resp?!

\item\label{icwpucp} $H_{\ca}^i(M)=\ns$   for all $i<0$ and all additive functors $\ca:\hw\opp\to \ab$ that respect products. 
\end{enumerate}
\end{theo}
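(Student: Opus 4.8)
The plan is to prove all equivalences of Part I by combining three ingredients — the weight complex functor, a passage to a weight-Karoubian extension, and a ``doubling'' device — and then to obtain Part II by duality. I write $t$ for the weight complex functor of $(\cu,w)$. First I would dispose of the purely weight-complex-theoretic conditions \ref{icwk1}, \ref{icwkw}, \ref{icwpuh}, \ref{icwpuc}, for which no Karoubianness is needed. By Lemma \ref{lwwh}(\ref{irwc6}) (together with the triviality that an object of $\kw(\hw)$ vanishes iff its identity is weakly homotopic to $0$), the object $t(M)$ equals $0$ (resp. lies in $K(\hw)_{\wstu\le 0}$) if and only if $\id_{t(M)}\backsim_{[-\infty,+\infty]}0$ (resp. $\id_{t(M)}\backsim_{[-\infty,-1]}0$); by Lemma \ref{lwwh}(\ref{irwc3}) this is equivalent to $\id_{t(M)}\backsim_{[-i,-i]}0$ for all $i\in\z$ (resp. all $i>0$), which by Theorem \ref{tpwckill}(\ref{iwckill1}) says exactly that $\id_M$ kills weight $i$ for all such $i$, i.e. condition \ref{icwkw}. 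Applying Proposition \ref{puredetkw}(I) to $g=\id_M$ (note that $H^i(\id_M)=\id_{H^i(M)}$ vanishes iff $H^i(M)=\ns$) then converts \ref{icwkw} into the pure-functor conditions \ref{icwpuh} and \ref{icwpuc}.

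Next I would treat \ref{icwk2} and \ref{icwk3}. Fix, using Proposition \ref{ppkar}(2), a weight-Karoubian extension $(\cu',w')$ of $(\cu,w)$, with weight complex functor $t'$ and weight-exact embedding $F\colon\cu\to\cu'$; the induced functor $\hf\colon\hw\to\hw'$ on hearts is a full embedding since $F$ is. The crucial preliminary observation is that \emph{the conditions on $t$ and $t'$ agree}: by Proposition \ref{pwt}(\ref{iwcfunct}) one has $t'(M)\cong\kw(\hf)(t(M))$, and since $\hf$ is a full embedding the hom-sets $\hw(C^i,C^j)=\hw'(C^i,C^j)$ that enter the homotopies defining $\backsim$ and $\backsim_{[-\infty,-1]}$ are unchanged; hence $t(M)=0$ (resp. $t(M)\in K(\hw)_{\wstu\le0}$) if and only if $t'(M)=0$ (resp. $t'(M)\in K(\hw')_{\wstu\le0}$). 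Granting this, Theorem \ref{tdegen}(II.1) (resp. (II.2) with $n=0$), applied inside the weight-Karoubian $\cu'$, turns \ref{icwk1} into the required decomposition of $M$ in $\cu'$; as this argument uses no special feature of the chosen extension, it holds in \emph{every} weight-Karoubian extension, giving \ref{icwk3}, which trivially yields \ref{icwk2}. Conversely \ref{icwk2} forces $t'(M)=0$ (resp. $t'(M)\in K(\hw')_{\wstu\le0}$) by Theorem \ref{tdegen}(I.2) (resp. (I.3)), hence \ref{icwk1}.

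The main obstacle is the descent from $\cu'$ back to $\cu$, that is, conditions \ref{icwk4} and \ref{icwk5}: the naive complement of a degenerate retract summand need not be degenerate. The device I would use is that for \emph{any} $W\in\obj\cu'$ one has $W\bigoplus W[1]\in\obj\cu$. Indeed $W$ is a retract of some $A\in\obj\cu$, so it splits off an idempotent $e\in\cu'(A,A)=\cu(A,A)$ with $A\cong W\bigoplus W'$, where $W'=\imm(1-e)$; computing the cone of the $\cu$-endomorphism $1-e$ after this splitting gives $\co(1-e)\cong W\bigoplus W[1]$, and as $\co(1-e)\in\obj\cu$ the claim follows. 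Now start from the $\cu'$-triangle $X\to M\to Y\to X[1]$ furnished by \ref{icwk2} (with $X$ right $w'$-degenerate, $Y$ left $w'$-degenerate, resp. $X\in\cu'_{w'\le 0}$) and form its direct sum with the $[-1]$-shifted triangle. The three vertices $X\bigoplus X[-1]$, $M\bigoplus M[-1]$, $Y\bigoplus Y[-1]$ all lie in $\obj\cu$ by the device above; since $F$ is full the three connecting morphisms are $\cu$-morphisms, so by the uniqueness of cones the summed triangle is distinguished \emph{in} $\cu$. Because shifts and finite direct sums preserve left/right weight-degeneracy and the class $\cu_{w\le 0}$ (Proposition \ref{pbw}(\ref{iext})), and because for objects of $\cu$ the $w'$- and $w$-notions coincide (Proposition \ref{ppkar}(1)), the outer vertices are genuinely right $w$-degenerate (resp. in $\cu_{w\le 0}$) and left $w$-degenerate; this is exactly \ref{icwk5}. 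Finally \ref{icwk5} gives \ref{icwk4} since $M$ is a retract of $M\bigoplus M[-1]$, and \ref{icwk4} gives back \ref{icwk1}, because $t$ of such an extension lies in the relevant class (Theorem \ref{tdegen}(I.2), (I.3)) and that class is retraction-closed, closing the cycle.

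For Part II I would apply Part I to the opposite weighted category $(\cu^{op},w^{op})$ of Proposition \ref{pbw}(\ref{idual}): under this duality $[-1]$ becomes $[1]$, the roles of left/right degeneracy and of $\cu_{w\le0}$, $\cu_{w\ge0}$ are interchanged, the $\wstu\le0$-condition on $t$ becomes the $\wstu\ge0$-condition, and killing weight $i$ for $i>0$ becomes killing it for $i<0$ (see Remark \ref{rkwsd}), so that conditions \ref{icwk6}--\ref{icwpucp} are precisely the $w^{op}$-translates of the ``resp.'' halves of \ref{icwk1}--\ref{icwpuc}. Alternatively one repeats the preceding three paragraphs verbatim, performing the doubling with $M\bigoplus M[1]$ and the $[1]$-shifted triangle and invoking the $\wstu\ge0$ halves of Theorem \ref{tdegen}(I.3) and (II.2). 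I expect the only genuinely delicate point to be this descent step — the identity $W\bigoplus W[1]\in\obj\cu$ and the verification that the summed triangle is distinguished in $\cu$; everything else is bookkeeping with the already-established Theorems \ref{tprkw}, \ref{tpwckill}, \ref{tdegen} and Proposition \ref{puredetkw}.
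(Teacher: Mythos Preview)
Your proposal is correct and follows essentially the same approach as the paper's own proof: the three ingredients you isolate --- the weight-complex/pure-functor equivalences via Theorem \ref{tpwckill} and Proposition \ref{puredetkw}, the passage to a weight-Karoubian extension to invoke Theorem \ref{tdegen}(II), and the ``doubling'' device $W\oplus W[\pm 1]\in\obj\cu$ to descend back to $\cu$ --- are exactly the ones the paper uses. The only cosmetic difference is that the paper proves Part II directly and declares Part I ``similar'', whereas you prove Part I and obtain II by duality (or repetition); your explicit justification that the summed triangle is distinguished in $\cu$ (via fullness of the embedding and uniqueness of cones) is a point the paper leaves implicit.
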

\begin{proof} We will only prove assertion II; the proof of assertion I is similar. %????

Clearly, condition \ref{icwk0} of the assertion implies condition \ref{icwk9}. %, and 
\ref{icwk7} follows from \ref{icwk8} since a weight-Karoubian extension $(\cu',w')$ of $\cu$ exists (see Proposition \ref{ppkar}(2)).

Condition \ref{icwk6} is easily seen to be equivalent to condition \ref{icwkwp} according to %Proposition 
 Theorem \ref{tpwckill}(\ref{iwckill3},\ref{iwckill1}) combined with Lemma \ref{lwwh}(\ref{irwc3},\ref{irwc6}). Moreover, condition \ref{icwk6} is equivalent to $t(M)\backsim_{[i,i]}0$ for all $i>0$ according to Lemma \ref{lwwh}(\ref{irwc3},\ref{irwc6}); thus applying Proposition \ref{puredetkw}(I) we obtain that this condition is also equivalent to conditions \ref{icwpuhp} and \ref{icwpucp}.

Next we note that (for any  weight-Karoubian extension $\cu'$ of $\cu$ and a fixed $M$)
%we have 
%$t(M)\backsim_w 0$ in $K(\hw)$ if and only if this is true in $K(\hw')$
$t(M)\in K(\hw)_{\wstu\ge 0}$ if and only if it belongs to $K(\hw')_{\wstu\ge 0}$; see   Lemma \ref{lwwh}(\ref{iwhefun},\ref{irwcwd},\ref{irwc6}). Hence  condition \ref{icwk9} implies condition \ref{icwk6}.
%Moreover, \ref{icwk6} is equivalent to \ref{icwk6r} by Remark \ref{rwc}(\ref{irwc6}).

Now we fix some $(\cu',w')$  and recall that (the conclusion of)
Theorem  \ref{tdegen}(II.2) %extends to weight-Karoubian categories 
can be applied to $\cu'$. %???? according to  Proposition \ref{pwkar}(1). 
 Hence  condition \ref{icwk6}  implies condition \ref{icwk8}. %Moreover, 
%Next, $M$ is  without weight $i$ for all  $i<0$ in $\cu$ if and only if it is so in $\cu'$ according to Theorem \ref{tprkw}(\ref{iprkwfunct},\ref{iprkwfunctemb}).

It remains to deduce condition \ref{icwk0} from condition \ref{icwk7}. 
Any $N'\in \obj \cu'$ %that 
is  the %"formal image"{del}
 image of an idempotent $p\in \cu(N,N)$ for some $N\in \obj \cu$ (see \S\ref{snotata}), and $\co(p)\cong N'\bigoplus N'[1]\in \obj \cu$ (cf. Lemma 2.2 of \cite{thom}). Hence the direct sum of the $\cu'$-"decomposition" of $N$ given by condition \ref{icwk7} with its shift by $[1]$ yields condition  \ref{icwk0}.
\end{proof}

\begin{rema}\label%{rptwcons} %
	{rwkarloc}
1. Clearly, the formulation of conditions  I.\ref{icwpuc} and II.\ref{icwpucp} can be combined with Proposition \ref{puredetkw}(II) to obtain the following statement: if $\cu$ satisfies the  Brown representability property and $w$ is smashing then an object $M$ is $w$-degenerate (resp. essentially $w$-negative, resp. essentially $w$-positive) if and only if $H^i(M)=\ns$ for any pure representable $H$ and any $i\in \z$ (resp. any $i>0$, resp. any $i<0$). 
	%$M\perp \cup_{i\in \z}\obj \hrt[i]$ (resp. $M\perp \cup_{i\in \z}\obj \hrt[i]$, ; 
	
	Note also that conditions   I.\ref{icwpuh},  I.\ref{icwpuc}, II.\ref{icwpuhp}, and II.\ref{icwpucp} of our theorem can be dualized in the obvious way. 
	%smashing Brown?!
	
	2. Recall that   \cite{bonspkar} %much more
	 contains %some extra 
	 much information on  idempotent extensions of $\cu$ such that $w$ extends to them. % is contained. 
	 In particular, the  (essentially) minimal weight-Karoubian extension $\cu'$ of $\cu$ was described as the smallest %Karoubi-closed 
 (strict) triangulated subcategory of $\kar(\cu)$ that contains both $\cu$ and $\kar(\hw)$. 
%$\wkar(\cu)=\lan \obj \kar(\cu^-)\cup \obj \kar(\cu^+) \ra_{\kar(\cu)}$. 
Since it is minimal, applying %our proposition
  conditions %I.\ref{icwk2} and II.\ref{icwk7}
	 I.\ref{icwk3} and II.\ref{icwk8} of Theorem \ref{twkar} in this $\cu'$  gives the maximal possible amount of information on $M$. % the corresponding $\cu$.

%It was also demonstrated (in
 3. Moreover, in \S3.1 of ibid. it was  demonstrated that a weight structure $w$ does not necessarily extend to the (whole) %Karoubi envelope of 
 category $\kar(\cu)$; thus idempotent extensions are necessary for our arguments.
\end{rema}

	Now let us prove a few results closely related to our theorem.
	
	\begin{pr}\label{prwkarloc}
	
	Let  $w$ be a weight structure on $\cu$.	
	
%Then the following statements are valid.
1. Assume that $w$ is left (resp. right) non-degenerate.

Then any weight-degenerate object of $\cu$ is right (resp. left) $w$-degenerate, and any %its 
 essentially $w$-negative (resp. essentially $w$-positive) object belongs to $\cu_{w\le 0}$ (resp. to $\cu_{w\ge 0}$).

2. Assume that $w$ is left  non-degenerate,  an object $M$ of $\cu$ is weight-degenerate, and either $M$  is  $w$-bounded below or $w$ is also right non-degenerate. Then $M$ is zero.

3. For an object $M$ of $\cu$ assume that $t(M)\cong t_1=(M_1^i)\cong t_2=(M_2^i)$ (in the categories $\kw(\hw)$ and $K(\hw)$; see Lemma \ref{lwwh}(\ref{iwhecat})). Let $\cu'$ be a triangulated category endowed with a weight structure $w'$; let $F:\cu\to \cu'$ be a weight-exact functor (with respect to $w,w'$), and assume that $F$ annihilates the groups $\cu(M_1^i,M_2^i)$ for all $i\in \z$.

Then $F(M)$ is $w'$-degenerate. 

4. Assume that $\cu_1,\ \cu_2$, and $\cu_3$ are full triangulated subcategories of $\cu$ such that $w$ restricts to them  (see Definition \ref{dwso}(\ref{idrest})), and suppose that the Verdier localization functor $F:\cu\to \cu'=\cu/\cu_3$ exists (i.e., all morphism classes in this localization are sets) and   all $\hw$-morphisms between elements of the corresponding classes $\cu_{1,w_1=0}$ and $\cu_{2,w_2=0}$ are killed by $F$. %the localization functor  

Then there exists a weight structure $w'$ on $\cu'$ such that $F$ is weight-exact, and for any $M\in \obj \cu_1\cap \obj\cu_2$ the object $F(M)$ is weight-degenerate. Moreover, if $w'$ is left non-degenerate then $F(M)$ is $w'$-right degenerate, and if we assume in addition that $M$ is $w$-bounded below then $M$ belongs to $\kar_{\cu}(\obj \cu_3)$.
\end{pr}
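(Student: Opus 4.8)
The plan is to deduce the whole statement from the earlier assertions of this proposition, once the relevant functors are shown to be weight-exact. First I would dispose of the existence of $w'$: since $w$ restricts to $\cu_3$, the Verdier quotient $\cu'=\cu/\cu_3$ inherits a weight structure $w'$ for which $F$ is weight-exact (with $\cu'_{w'\le 0}$ and $\cu'_{w'\ge 0}$ obtained as the retraction-closures of $F(\cu_{w\le 0})$ and $F(\cu_{w\ge 0})$); this is the standard localization behaviour of weight structures, cf.\ \cite{bws}. This gives the first assertion, and I would then fix $M\in\obj\cu_1\cap\obj\cu_2$.

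Next I would compute two weight complexes of $M$. As $w$ restricts to $\cu_1$ and to $\cu_2$, both inclusions $\cu_1\to\cu$ and $\cu_2\to\cu$ are weight-exact, and a weight decomposition of $M$ taken inside $\cu_i$ is automatically a $\cu$-weight decomposition, because the truncation classes of $w_i$ are the intersections of those of $w$ with $\obj\cu_i$ (see Definition \ref{dwso}(\ref{idrest})). Hence, choosing all the weight truncations of $M$ inside $\cu_1$ and running the recipe of Remark \ref{rwc}(\ref{irwcob}), I obtain a complex $t_1=(M_1^i)$ with $M_1^i\in\cu_{1,w_1=0}$ that is an admissible computation of $t(M)$; in the same way I get $t_2=(M_2^i)$ with $M_2^i\in\cu_{2,w_2=0}$. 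Since any two such computations are canonically isomorphic in $K(\hw)$ (hence also in $\kw(\hw)$) by loc.\ cit., this yields $t(M)\cong t_1\cong t_2$ in $K(\hw)$, exactly the input needed for assertion 3.

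The key point is then the annihilation hypothesis: each $M_1^i$ lies in $\cu_{1,w_1=0}$ and each $M_2^i$ in $\cu_{2,w_2=0}$, so every group $\cu(M_1^i,M_2^i)$ consists of $\hw$-morphisms between the two prescribed classes and is therefore killed by $F$. Assertion 3 now applies and shows that $F(M)$ is $w'$-degenerate. If moreover $w'$ is left non-degenerate, assertion 1 read in $(\cu',w')$ upgrades this to the statement that $F(M)$ is right $w'$-degenerate.

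For the last claim I would add the hypothesis that $M$ is $w$-bounded below; weight-exactness of $F$ then makes $F(M)$ $w'$-bounded below, so $F(M)$ is simultaneously $w'$-degenerate and $w'$-bounded below while $w'$ is left non-degenerate. Assertion 2 applied in $(\cu',w')$ forces $F(M)=0$, and since the kernel of a Verdier localization $\cu\to\cu/\cu_3$ is exactly the class of $\cu$-retracts of objects of $\cu_3$, the vanishing $F(M)=0$ says precisely that $M\in\kar_{\cu}(\obj\cu_3)$. The step I expect to require the most care is the verification in the second paragraph that the $w_i$-truncations really do function as $w$-truncations in $\cu$, so that $t_1$ and $t_2$ are genuine computations of $t(M)$ yielding an isomorphism in $K(\hw)$ (and not merely in $\kw(\hw)$), since assertion 3 is stated at the level of $K(\hw)$; once this and the kernel description of the localization are secured, the remaining deductions are formal.
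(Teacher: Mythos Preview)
Your proof of assertion 4 is correct and follows essentially the same route as the paper: first invoke the existence of the quotient weight structure (the paper cites Proposition~8.1.1(1) of \cite{bws}), then produce two computations $t_1,t_2$ of $t(M)$ with terms in $\cu_{1,w_1=0}$ and $\cu_{2,w_2=0}$ respectively, apply assertion~3, and finish via assertions~1 and~2 together with the standard description of the kernel of a Verdier localization (the paper cites Lemma~2.1.33 of \cite{neebook}). The only cosmetic difference is that the paper obtains $t_1,t_2$ by invoking Proposition~\ref{pwt}(\ref{iwcfunct}) for the weight-exact inclusions $\cu_i\hookrightarrow\cu$, whereas you build them by hand from Remark~\ref{rwc}(\ref{irwcob}); both are valid. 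Your closing worry about needing the isomorphisms in $K(\hw)$ rather than $\kw(\hw)$ is unnecessary: the projection $K(\hw)\to\kw(\hw)$ is full and conservative by Lemma~\ref{lwwh}(\ref{iwhecat}), so any $\kw(\hw)$-isomorphism lifts to a $K(\hw)$-isomorphism, which is exactly why the parenthetical in the statement of assertion~3 mentions both categories.
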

\begin{proof}
1. This is an easy consequence of the axiom (i) of Definition \ref{dwstr} along with the following fact %the following observation: 
 that follows from this axiom immediately:  retracts of left and right $w$-degenerate objects are left and right $w$-degenerate, respectively. %immediately from the .

2.  According to the previous assertion, $M$ is right weight-degenerate, i.e., it belongs to $\cap_{i\in \z}\cu_{w\le i}$. On the other hand, $M$ belongs to $\cu_{w\ge i+1}$ for some $i\in \z$ (and it actually belongs to all of these classes in the second case by the previous assertion). Since $\cu_{w\le i}\perp \cu_{w\ge i+1}$, $M\perp M$; hence $M=0$.

3. Let $m$ be a $\kw(\hw)$-isomorphism $t_1\to t_2$. Then for the functor $\kw(\hf):\kw(\hw)\to \kw(\hw')$ given by  Lemma \ref{lwwh}(\ref{iwhefun}) (here $\hf$ is the restriction of $F$ to hearts) we obviously have $\kw(\hf)(m)=0$. Since $\kw(\hf)(m)$ is also an isomorphism, we obtain $\kw(\hf)(t_1)=0$. On the other hand, by Proposition \ref{pwt}(\ref{iwcfunct}) we have $t_{w'}(F(M))\cong \kw(\hf)(t(M))\cong \kw(\hf)(t_1)$; hence $t_{w'}(F(M))=0$, as desired. %by definition???%in particular??! A separate part?!

4. $w'$ exists according to  Proposition 8.1.1(1) of  \cite{bws}.  Next, Proposition \ref{pwt}(\ref{iwcfunct}) gives the existence of complexes $t_1$ and $t_2$ in $\kw(\hw)$  such that $t(M)\cong t_1\cong t_2$, the terms $M_1^i$ belong to $\cu_{1,w_1=0}$, and the terms of $t_2$ belong to $\cu_{2,w_2=0}$.  Thus applying assertion 3 we  obtain that $F(M)$ is $w'$-degenerate indeed. 

By assertion 1 it follows that $F(M)$ is right $w'$-degenerate whenever $w'$ is left non-degenerate. Lastly, if $M$ is weight-bounded below then $F(M)$ also is. Hence our assumptions imply that $F(M)=0$ according to assertion 2. Thus $M$ belongs to $\kar_{\cu}(\obj \cu_3)$ (here we apply a well-known property of Verdier localizations; see Lemma  2.1.33 of \cite{neebook}).
\end{proof}

	\begin{rema}\label{rbinters}
	Now let us describe an application of this proposition that is crucial for \cite{binters};\footnote{Since \cite{binters} was written earlier than the current paper, it actually refers to a previous version of this text. However, the exposition in the current version is more accurate. Note also that the most recent version of the theory of weight complexes (that was applied in ibid. as well) is currently contained in \cite{bwcp}.} cf. also \S4.3 of \cite{bsnew}.
	
	Assume that $w$ is the weight-structure {\it purely compactly generated} by a subcategory $\hu$ of $\cu$ in the sense of Theorem \ref{thegcomp} below (in particular, $\cu$ and $\hu$ satisfy the assumptions of the theorem), and assume that $\cu_1$, $\cu_2$, and $\cu_3$  are %compactly 
	 generated %(see Definition \ref{dcomp} below) %(\ref{idcompg}))
		by the corresponding subcategories $\hu_1$, $\hu_2$, and $\hu_3$ of $\hu$ as localizing subcategories of $\cu$ (see Definition \ref{dcomp}(\ref{idlocal}) below), respectively. As demonstrated in (Proposition 1.8 and the proof of Proposition 1.9 of) ibid., if all morphisms between $\hu_1$ and $\hu_2$ factor through $\hu_3$ then all the assumptions of  Proposition \ref{prwkarloc}(4) are fulfilled, and the corresponding weight structure $w'$ is left non-degenerate.\footnote{Actually, in ibid. the subcategories $\hu,\hu_1,\hu_2$, and $\hu_3$ were assumed to be additive; yet this assumption is easily seen not to be actual for the purposes of that paper. Note also that the only part of the proof of   \cite[Proposition 1.9]{binters} that we apply in this remark is that the "factorization condition" for the categories $\hu_i$ ($i=1,2,3$) implies the one for the classes $\cu_{i,w_i=0}$, and this implication is an easy consequence of  \cite[Proposition 1.8]{binters} (cf. also Theorem \ref{thegcomp}(\ref{itnp1d}) below).} Moreover, $\cu_3$ is Karoubian (see Remark \ref{rcwkar}(3)); hence if $M$ is $w$-bounded below and belongs to $\obj \cu_1\cap \obj \cu_2$ then $M$ is an object of $\cu_3=\kar_{\cu}\cu_3$.
		
		Thus our  Proposition \ref{prwkarloc} gives all the necessary prerequisites for the proof of   Proposition 1.9 of ibid. (that is critically important for the whole paper).%loc. cit.
\end{rema}

\subsection{An application to the conservativity of weight-exact functors}\label{sprtwcons}

Our results imply that certain weight-exact functors are ("almost") conservative; note that this statement does not mention weight complexes. 

\begin{pr}\label{pwcons}
Let $\cu$ and $\cu'$ be  triangulated categories endowed with weight structures $w$ and $w'$, respectively; let $F:\cu\to \cu'$ be a weight-exact functor.

Assume that the induced functor $\hf:\hw\to \hw'$ is full, %and conservative, 
any $\hw$-endomorphism killed by $\hf$ is nilpotent, 
and  for some $M\in \obj \cu$ %its image $F(M)$ is zero
 the object $F(M)$ belongs to  $\cu'_{w'\notin[m,n]}$ %{dkw}(2) is without weights  
for some $m\le n\in \z$ (resp. $F(M)$ is $w'$-degenerate, resp. $F(M)$ is essentially $w'$-positive, resp. essentially $w'$-negative). %, resp. %$F(M)\in \cu'_{w'\le 0}$, resp.  ).

Then $M$ is  without weights $m,\dots, n$  (resp. $M$ is weight-degenerate, resp. essentially $w$-positive, resp. essentially $w$-negative). % resp. without weights $m,\dots, n$).
\end{pr}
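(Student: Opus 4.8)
The plan is to reduce all four cases to a single \emph{reflecting} statement and then to prove that statement via the weight complex functor. Concretely, I would first show that it suffices to prove: for each $i\in\z$, if $F(M)$ is without weight $i$ then $M$ is without weight $i$. Indeed, by Theorem \ref{tprkw}(\ref{iprkws}) (applied in both $\cu$ and $\cu'$) the condition $F(M)\in\cu'_{w'\notin[m,n]}$ means exactly that $F(M)$ is without weight $i$ for every $m\le i\le n$, and the conclusion $M\in\cu_{w\notin[m,n]}$ is the analogous statement for $M$. For the remaining three cases I would invoke Theorem \ref{twkar}: being $w'$-degenerate is equivalent to being without weight $i$ for all $i\in\z$ (condition I.(\ref{icwkw})), being essentially $w'$-positive to being without weight $i$ for all $i<0$ (condition II.(\ref{icwkwp})), and being essentially $w'$-negative to being without weight $i$ for all $i>0$ (the ``resp.'' form of condition I.(\ref{icwkw})); the same equivalences hold for $M$, so reflecting ``without weight $i$'' for each relevant $i$ finishes every case.

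Next I would translate the reflecting statement into one about weight complexes. Fixing $i$ and setting $j=-i$, I would choose a representative complex $C$ of $t(M)$ in $K(\hw)$, with differential $d$; by Proposition \ref{pwt}(\ref{iwcfunct}) its termwise image $\hf(C)$ represents $t'(F(M))$ up to $\kw(\hw')$-isomorphism. By Theorem \ref{tpwckill}(\ref{iwckill1}) the hypothesis ``$F(M)$ without weight $i$'' becomes $\id_{\hf(C)}\backsim_{[j,j]}0$ (the property $\backsim_{[j,j]}0$ of an identity morphism depending only on the $\kw(\hw')$-isomorphism class of its object), while the goal ``$M$ without weight $i$'' becomes $\id_C\backsim_{[j,j]}0$.

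The heart of the argument would then run as follows. By Lemma \ref{lwwh}(\ref{irwc4}), $\id_{\hf(C)}\backsim_{[j,j]}0$ yields a chain homotopy $s$ on $\hf(C)$ with $\id_{\hf(C^j)}=\hf(d^{j-1})s^j+s^{j+1}\hf(d^j)$. Using fullness of $\hf$ I would lift $s^j$ and $s^{j+1}$ to $\hw$-morphisms $\sigma^j$ and $\sigma^{j+1}$ and set $e=\id_{C^j}-d^{j-1}\sigma^j-\sigma^{j+1}d^j$; then $\hf(e)=0$, so $e$ is nilpotent by hypothesis. Letting $\sigma$ be the homotopy supported in degrees $j,j+1$ and putting $\psi=\id_C-(d\sigma+\sigma d)$, I obtain a chain endomorphism of $C$ that is chain-homotopic to $\id_C$ and whose degree-$j$ component equals $e$. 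Since both composition of chain maps and the chain-homotopy relation act degreewise, for $N$ large the power $\psi^N$ is chain-homotopic to $\id_C$ and has degree-$j$ component $e^N=0$; a second application of Lemma \ref{lwwh}(\ref{irwc4}) then gives $\id_C\backsim_{[j,j]}0$, as desired.

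The main obstacle I anticipate is precisely this last step, where the two hypotheses must be combined: fullness of $\hf$ permits lifting the contracting homotopy from $\hw'$ to $\hw$, but only up to the correction term $e$ lying in the kernel of $\hf$, and it is the nilpotence hypothesis that lets me erase $e$ by passing to the high power $\psi^N$. The reductions of the first paragraph are bookkeeping through the cited theorems, and the translation of the second is routine once Proposition \ref{pwt}(\ref{iwcfunct}) and Theorem \ref{tpwckill}(\ref{iwckill1}) are in hand.
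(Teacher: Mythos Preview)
Your proposal is correct and follows essentially the same route as the paper: reduce everything to the single claim ``$F(M)$ without weight $i\Rightarrow M$ without weight $i$'' via Theorem \ref{tprkw}(\ref{iprkws}) and Theorem \ref{twkar}, translate this through Theorem \ref{tpwckill}(\ref{iwckill1}) and Proposition \ref{pwt}(\ref{iwcfunct}) into a statement about weight complexes, lift the contracting homotopy using fullness of $\hf$, and kill the resulting error term via the nilpotence hypothesis. The only cosmetic difference is in the last step: the paper inverts $\id-\eps$ by the finite geometric series $1+\eps+\dots+\eps^{n-1}$ to produce explicit $a,b$ with $\id_{M^{-m}}=a\circ d^{-m}+d^{-m-1}\circ b$, whereas you package the same idea as taking a high power $\psi^N$ of the chain endomorphism $\psi=\id_C-(d\sigma+\sigma d)$; both arguments are equivalent.
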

\begin{proof}
We start from proving the first statement in our proposition for $m=n$, i.e., we assume that $F(M)$ is without weight $m$. 
We should prove that this assumption implies that $M$ is without weight $m$ as well; we will call this implication Claim (*).

According to %Proposition 
 Theorem \ref{tpwckill}(\ref{iwckill3}) this claim is equivalent to the following one: $t(\id_M)\backsim_{[-m,-m]}0$ whenever $t_{w'}(\id_{F(M)})\backsim_{[-m,-m]}0$. %implies that  if and only if $t(\id_M)\backsim_{[-m,-m]}0$. 

Now, we can assume that $t_{w'}(F(M))$ is obtained from %a choice of a 
 the weight complex $t(M)=(M^i)$ (whose boundary morphisms will be denoted by $d^i$) by means of termwise application of $F$; see Proposition \ref{pwt}(\ref{iwcfunct}). Thus there exist morphisms $h'\in \hw'(F(M^{-m}),F(M^{-m-1}))$  and $j'\in \hw'(F(M^{1-m}),F(M^{-m}))$ such that $\id_{F(M^{-m})}=j'\circ F(d^{-m}) +F(d^{-m-1})\circ h'$. Since the restriction of $F$ to $\hw$ is full and conservative, we can  lift $h'$ and $j'$ to some $\hw$-morphisms $h$ and $j$, and for any such lifts  the endomorphism  $\eps=%j\circ d^{-m} +d^{-m-1}\circ h-\id_{M^{-m}}$ 
\id_{M^{-m}}-j\circ d^{-m} -d^{-m-1}\circ h$  of $M^{-m}$ is nilpotent. %an automorphism of    a  nilpotent automorphism . 
 Hence there exists $n>0$ such that $\id_{M^{-m}}=(j\circ d^{-m} +d^{-m-1}\circ h)(\id_{M^{-m}}+\eps+\eps^2+\dots+ \eps^{n-1})$. %and it easily follows that 
 Therefore $\id_{M^{-m}}$ can be presented in the form $a\circ d^{-m}+  d^{-m-1}\circ b$ for some $a\in \hw(M^{-m},M^{-m-1})$  and $b\in \hw(M^{1-m},M^{-m})$ (%since %there exist
one can just write down explicit formulas for $a$ and $b$ in this setting). Thus $t(\id_M)\backsim_{[-m,-m]}0$. 
	
	%$t(\id_M)\backsim_{[-m,-m]}0$ indeed. %explain; more detail?!!?!

Next, the general case of the "without weights $m,\dots, n$ part" follows from %the $m=n$-one 
 Claim (*) immediately according to Theorem \ref{tprkw}(\ref{iprkws}).

Lastly, our remaining statements  follow from Claim (*) as well according to  Theorem \ref{twkar}; see conditions I.\ref{icwkw} and II.\ref{icwkwp}  in it. 
\end{proof}

\begin{rema}\label{ray}
\begin{enumerate}
\item\label{iray1}
Our proposition essentially says that $F$ is  "conservative (and detects weights; cf. Remark 1.5.3(1) of \cite{bwcp}) up to weight-degenerate objects". The latter feature of the result is unavoidable. Indeed, arguing similarly to Proposition 4.2.1(1) of \cite{bsnew} one can easily prove that for any set $W$ of weight-degenerate objects of $\cu$ the localization of $\cu$ by  
 the triangulated subcategory generated by $W$ gives a weight-exact functor that restricts to a full embedding of $\hw$ (as well as of the subcategory $\cu^b$ of $w$-bounded objects) into $\cu'$. 

%Note also that
Another example is given by singular homology. It is well-known to give an exact functor $\shtop\to D(\ab)$ whose kernel consists of {\it acyclic spectra}; hence this kernel is non-zero (see Theorem 16.17 of \cite{marg}). Next,  this functor is weight-exact and gives an equivalence of hearts if we take $w$ to be the weight structure purely compactly generated by the sphere spectrum $S^0\in \obj \shtop$ and $w'$ to be   {\it purely compactly generated} by $\z\in \obj D(\ab)$; see Theorems \ref{thegcomp}(\ref{itnp1},\ref{itnp1d},\ref{itnwe}) and \ref{tsh}(\ref{itoptriv},\ref{itopsingh}) below.
%recall restrictions corresponding to subcategories??!

%generalization of   \cite[Theorem 2.8(d)]{wildcons}??!! semi-primary+full+conservative: easily seen to imply?!
\item\label{iray2} Now let us discuss certain motivic examples to our proposition.

Firstly, we recall that "any reasonable version" of the category $\dm$ of Voevodsky motives over a perfect field $k$ contains the corresponding connective subcategory $\chow$ (of Chow motives) whose objects are compact in $\dm$ (see Definition \ref{dcomp} below and  Remark 3.1.2 of \cite{bokum}). Moreover, the category $\chow$ compactly generates $\dm$ whenever the characteristic of $k$ is either zero or invertible in the coefficient ring of these categories. Applying Theorem \ref{thegcomp} below one obtains a certain weight structure $\wchow$ on $\dm$ whose heart consists of retracts of coproducts of Chow motives. Moreover, this weight structure restricts (see Definition \ref{dwso}(\ref{idrest})) to the subcategory $\dmgm$ of  geometric motives (this is the smallest  retraction-closed triangulated subcategory of $\dm$ containing $\chow$) as well as to the bigger subcategory $\dm^{\chow}$ of $\dm$ whose objects are those $M\in \obj \dm$ whose weight complex $t(M)$ is $\kw({\hw}_{\chow})$-isomorphic to an object of $\kw(\chow)$, and the heart of both of these restrictions is $\chow$; %English?? 
see Remarks 3.1.4(1) and 3.3.2(1) of \cite{bwcp} for more detail.

Let us now describe an important weight-exact motivic functor essentially originating from \cite{ayoubcon}. In ibid. the  case $\cha k=0$ and a certain triangulated category $\da$ of \'etale motives was considered; $\da$ is a $k$-linear version of $\dm$. Moreover, the  truncated de Rham spectrum $\tau_{\ge 0}\omdr$ (see Theorem II of ibid.) was taken; note that this object $\omp$ of $\da$  it is a highly structured ring spectrum with respect to the  model structure on $\da$ that was considered in ibid.  Thus one can take $\cu'$ to be the derived category of highly structured $\omp$-modules in $\da$; there is a natural "free module" functor $F'=-\otimes \omp:\da\to \cu'$. Moreover, the images of Chow motives in $\cu'$ yield a weight structure $w'$ on this category, the heart of $w'$ is the formal coproductive hull (see Definition \ref{dcomp}(\ref{idhull}) below)
 of the category of $k$-linear motives up to algebraic equivalence, and  %$-\otimes \omp$ 
 $F'$ is weight-exact with respect to these weight structures (see Remark 3.3.2(1) of \cite{bwcp}). Since an endomorphism of Chow motives that is algebraically equivalent to zero is nilpotent according to Corollary 3.3 of \cite{voevnilp}, the restriction of $F'$ to the subcategory $\dm^{\chow}=\da^{\chow}$ satisfies the assumptions of our proposition. 

\item\label{iray3} Now let us compare our proposition with similar results of \cite{wildcons} and \cite{bwcp}. 

In Theorem 1.5.1(1,2) of  \cite{bwcp} only the case where $M$ is bounded either above or below was considered. On the other hand, $\hf$ was %only
 just  assumed to be (full and) conservative. So, neither our proposition implies loc. cit. nor the converse is valid.\footnote{It is an interesting question whether it suffices to assume that $\hf$ is full and conservative in our proposition. } Note also that in the case where the endomorphisms killed by $\hf$ are nilpotent all the conclusions of loc. cit. can be easily deduced from our proposition. % (see of ibid.).

Next we recall that in Theorem 2.8 of \cite{wildcons} (as well as in the weaker Theorem 2.5 of  ibid.) it was assumed (if one uses our notation) that $F$ is weight-exact, $\hf$ is full and conservative, $w$ is {\it bounded} (i.e., any object of $\cu$ is $w$-bounded both above and below), $\hw$ is Karoubian and {\it semi-primary}. Now, these assumptions imply that endomorphisms killed by $\hf$ are nilpotent; indeed, the conservativity of $\hf$ means that these endomorphisms belong to the {\it radical} (morphism ideal) of $\hw$, and semi-primality means that that all elements of this radical are nilpotent. Thus our proposition implies  Theorem 2.8 %(d) 
 of ibid. (cf. Theorem 1.5.1(1) of \cite{bwcp}). %Moreover, it easily implies the remaining parts of loc. cit. (see Remark 1.5.3(2) of \cite{bwcp} for a similar argument that  yields this claim).
%Let us explain that these assumptions on $\hf$ imply our ones. Indeed,  since   $\hw$

\item\label{iray4} One can also obtain plenty of examples to our proposition by taking $F=K(G):K(\bu)\to K(\bu')$; %starting from a full additive functor
 here  $G:\bu\to \bu'$ is a full additive functor such that any $\bu$-endomorphism killed by $G$ is nilpotent, and one takes $w$ and $w'$ to be the corresponding stupid weight structures. Note that in this case we have $\hf\cong \kar(G)$ (see Remark \ref{rstws}(1)); thus $\hf$ fulfils our assumptions (as well). 

 Since $w$ is non-degenerate, our proposition implies that $F$ is conservative. The author wonders whether %any alternative 
 a proof of this statement "without  killing weights" exists; this %would possibly
  may  allow to modify the assumptions on $\hf$ in Proposition \ref{pwcons}.
%Other examples to our proposition are discussed in Remark \ref{rnd}(3). 
\end{enumerate}
\end{rema}

\subsection{Some counterexamples in the non-weight-Karoubian case}\label{snkarex} %section?! A,B,C?!!

Our examples %will be 
 are rather simple; their main "ingredient" is $K(\lvect)$ (the homotopy category of %bounded 
 complexes of finite dimensional $L$-vector spaces; here $L$ is an arbitrary fixed field).

\subsubsection{An "indecomposable" weight-degenerate object }\label{sindwd}
 
Let us demonstrate that Theorem  \ref{tdegen}(II) and %Proposition \ref{pwkar}(2) 
 Theorem \ref{tpwckill}(\ref{iwckill4})  do not extend to arbitrary (i.e., to not necessarily weight-Karoubian) triangulated categories; %thus 
 in particular, retracts cannot be avoided when one defines essentially $w$-negative and essentially $w$-negative objects.

Our example will be the full subcategory $\cu$ of $(K^b(\lvect))^3$ consisting of objects whose "total Euler characteristic"  is even (i.e., the sum of dimensions of all homology of all the three components of $M=(M_1,M_2,M_3)$ should be even). We define $w$ for $\cu$ as follows: $\cu_{w\le 0}$ consists of those $(M_1,M_2,M_3)$ such that $M_1\cong 0$ and $M_2$ is acyclic in negative degrees; %=0? 
$(M_1,M_2,M_3)\in \cu_{w\ge 0}$ if $M_3\cong 0$ and  $M_2$ is acyclic in positive degrees. This is easily seen to be a weight structure; indeed,   a weight decomposition of an object $(M_1,M_2,M_3)$ is given by any triangle of the form $(0,M',M_3)\to  (M_1,M_2,M_3)\to (M_1,M'',0)$, where %$M'\to M_2'\to M''$ is a stupid weight decomposition of a complex $M_2'$ that is isomorphic to  $M_2$ in $K^b(\lvect)$ %with  and
$M'\to M_2\to M''$  is a %stupid weight 
$\wstu$-decomposition of $M_2$ with  the corresponding parities of the Euler characteristics. Next, one can easily see that the object $M=(L,0,L)$ (here we put the $L$'s in degree $0$ though the degrees actually do not matter) % make no difference)
  is weight-degenerate (since it is weight-degenerate in the obvious extension of $w$ to its weight-Karoubian extension $\cu'=(K^b(\lvect))^3$; see Proposition \ref{pwt}(\ref{iwcfunct})). Yet $M$ clearly cannot be presented as an extension of a left $w$-degenerate  object (i.e., of an object whose last two components are zero) by  %a right degenerate one (whose first two components are zero). 
an element of $\cu_{w\le 0}$ (since the corresponding "decomposition" in $\cu'$ is unique and its "components" have odd "total Euler characteristics"). Thus we obtain that %first two  statements in 
 both part of Theorem  \ref{tdegen}(II) do not extend to $\cu$. %; for the same reasons, the third statement in loc. cit. does not extend to $\cu$ either (and the same $M$ does not possess the corresponding "decomposition"). 

Looking at the proof Theorem  \ref{tdegen}(II) one immediately obtains the existence of $n>0$ such that %for the object considered above 
there does not exist a triangle $X_n\to M \to Y_n$
with $X_n\in \cu_{w\le -n}$, $Y_n\in \cu_{w\ge n}$. Moreover, one can easily check directly that a triangle of this sort does not exist for $n=1$ already.

This example also demonstrates that one has to assume that $\cu$ is weight-Karoubian in %Proposition \ref{pwkar}(2).
 Theorem \ref{tpwckill}(\ref{iwckill4}).   Indeed, for our object $M$ we have $t(M)=0$ since $M$ is weight-degenerate. %; yet $M$ does not possess

%in the bounded case: use a $K_0$-argument?!

\subsubsection{A bounded object that  is  without  weight $0$ but does not possess a decomposition avoiding this weight}\label{ssskwild}

 The example above demonstrates that Theorem \ref{tprkw}(\ref{iwildef2}) does not extend to arbitrary $(\cu,w)$ (i.e., that our definition of objects without weights $m,\dots,n$ is not equivalent to Definition 1.10 of \cite{wild} in general). Yet the weight structure is degenerate in this example. Now we give a bounded %counterexample %of the latter sort.
%to the latter extension statement.
example of the non-equivalence of definitions (i.e., all objects of $\cu$ are bounded both above and below; this condition is easily seen to imply that $w$ is non-degenerate).

%\item\label{irwild5}
 %Assertion  \ref{iwildef2} of the proposition is wrong for a general $\cu$ (i.e., for a one that is not necessarily Karoubian). 
%As an example, we  For example, d
Denote by $\bu$  the category of even-dimensional vector spaces over %(any) fixed field 
$L$; take $\cu=K^b(\bu)$, $$M=\dots \to 0\to L^2\stackrel{\begin{pmatrix}
 1 & 0  \\
 0 & 0 %\\
\end{pmatrix}}{\longrightarrow} L^2\stackrel{\begin{pmatrix}
 0 & 0  \\
 0 & 1 %\\
\end{pmatrix}}{\longrightarrow} L^2 \to 0\to\dots;$$ we put the %se 
 non-zero vector spaces in degrees $-1,0$, and $1$, respectively. %complete by 0s: earlier?! put the first N in degree????!
Clearly, the composition $(0\to L^2\to L^2) \to M\to (L^2\to L^2\to 0)$ is zero %(these $L^2$ are in degrees )
(here we consider the corresponding stupid truncations of $M$). Thus  $M$ is without weight $0$ (see condition \ref{ikw1} in   Proposition \ref{pkillw}). Yet $M$ does not possess a  decomposition % (\ref{ewild}) for $m=n=0$ 
avoiding weight $0$ since the $L$-Euler characteristics of the corresponding $X$ and $Y$ cannot be odd.

Obviously, this example also yields that %decompositions of the type (\ref{ewild}) 
 decompositions avoiding weights  $m,\dots, n$ do not "lift" from a (weight-)Karoubian $\cu'$ (in our case $\cu'=K^b(\lvect)$; the corresponding weight structure is the stupid one) to $\cu$ (cf. Remark \ref{rwild}(\ref{irwild6})). % (that is endowed with $w$ and contains $M$). 

\section{On "topological" examples and converse Hurewich theorems}\label{smash}

In this section we discuss the applications of our results to equivariant stable homotopy categories (as well as to general {\it purely compactly generated} weight structures). %So we 
 We significantly extend the main results of \cite[\S4]{bwcp}.

In \S\ref{smashs} we recall some properties of (smashing) weight structures generated by sets of compact objects in their hearts %(i.e., one should start from a negative subcategory of compact objects in $\cu$) 
and apply the %theory of this paper
 main results of the previous sections to this setting.
%relate representable pure functors to  killing weights.  %we also study the relation of this category $\aucp$ to detecting and killing weights.
We also study a certain cohomological dimension $1$ case separately. %later??!

In \S\ref{shur} we apply our results to prove some new properties of the spherical weight structure $\wg$ on the equivariant stable homotopy category $\shg$ of $G$-spectra ($\wg$ was introduced in \cite{bwcp}; actually, we  work in  a somewhat more general context).  We obtain a certain   improvement of the natural Hurewicz-type theorem in this setting. 
In particular, in the case $G=\{e\}$ (and so, $\shg=\shtop$) we give an if and only if condition for the vanishing of the negative singular homology groups of a spectrum. Moreover, our theory of objects without weights in a range gives certain canonical "decompositions" of spectra whose singular homology vanishes in two subsequent degrees (see Theorem \ref{tsh}(\ref{itopww}) and Remark \ref{rshg}(\ref{irghurww}) below).

\subsection{On purely compactly generated weight structures}\label{smashs}

In this section we will always assume that $\cu$ is closed with respect to (small) coproducts. %; a weight structure $w$ on it will always be smashing.
Let us recall a class of smashing weight structures that was studied in (\S3.2 of) \cite{bwcp} and (\S2.3 of) \cite{bsnew}.

\begin{defi}\label{dcomp}
\begin{enumerate}
Let $\hu$ be a full subcategory of a %(triangulated) category 
 $\cu$.

\item\label{idneg}
 We will say that the subcategory %$\hu\subset \cu$
 $\hu$  is {\it connective} (in $\cu$) if $\obj \hu\perp (\cup_{i>0}\obj (\hu[i]))$.\footnote{ In earlier text of the author connective subcategories were called {\it negative} ones. Moreover, in several papers (mostly, on representation theory and related matters) a connective subcategory satisfying certain additional assumptions was said to be {\it silting}; this notion generalizes the one of {\it tilting}.}

\item\label{idhull}
Let  $\hu'$ % of the %(obviously, additive) 
be the category %$\coprod \cp$  $\hu$
 of "formal coproducts" of objects of $\hu$,
%elements of $\cp$, 
i.e.,  the objects of %the latter category 
 $\hu'$ are of the form $\coprod_i P_i$ for (families of) $P_i\in \obj \hu$, and $\hu'(\coprod M_i,\coprod N_j)=\prod_i (\bigoplus_j \hu(M_i,N_j))$.
Then we will call $\kar(\hu')$ the {\it formal coproductive hull} of $\hu$.

\item\label{idlocal}
We will say  that a full triangulated subcategory $\du\subset \cu$ is  {\it localizing}  whenever $\du$ closed with respect to $\cu$-coproducts. 
%the class $\obj \du$ is $\al$-smashing (resp., smashing) in $\cu$.
Respectively, we will call the smallest localizing  subcategory of $\cu$ that contains a given class $\cp\subset \obj \cu$  the {\it  localizing subcategory of $\cu$ generated by $\cp$}. 

\item\label{idcomp}
An object $M$ of $\cu$ is said to be {\it compact} if %and only if 
 the functor $H^M=\cu(M,-):\cu\to \ab$ respects coproducts. 

\item\label{idcompg}
 We will say that $\cu$ is {\it compactly generated} by $\cp\subset \obj \cu$ if $\cp$ is a {\bf set} of compact objects 
that generates it as its own localizing subcategory.
%\item\label{idcgw}
\end{enumerate}
\end{defi}

Now let us recall the definition and the main properties of {\it purely compactly generated} weight structures; these are the ones that we will now describe.

\begin{theo}\label{thegcomp}
Let $\hu$ be a connective %(see Definition \ref{dwso}(\ref{idneg})) {dcomp}!!
 subcategory of $\cu$ such that $\cp=\obj \hu$ compactly generates $\cu$ (so, $\hu$ is small and its objects are compact in $\cu$). %; $\cp=\obj \hu$. 
 Then the following statements are valid.
\begin{enumerate}

\item\label{itnpb} $\cu$ is Karoubian and satisfies the  Brown representability property (see Definition \ref{dbrown}). %Proposition \ref{ppcoprws}(\ref{icopr5b})).

\item\label{itnp1}
There exists a unique smashing weight structure $w$ on $\cu$ such that $\cp\subset \cu_{w=0}$,  %this weight structure 
  and $w$ is left non-degenerate.

\item\label{itnp1d} For this weight structure $\cu_{w\le 0}$ (resp. $\cu_{w\ge 0}$) is the smallest subclass of $\obj \cu$ that is closed with respect to coproducts, extensions, and contains $\cp[i]$ for $i\le 0$ (resp. for $i\ge 0$), and $\hw$ %is equivalent to the big hull of $\cp$ (in the obvious way).
 %consists of all retracts of coproducts of (copies of) elements of $\cp$.
is %the coproductive 
 equivalent to the formal coproductive  hull of $\hu$  (see Definition \ref{dcomp}(1)) in the obvious way.

Moreover, $\cu_{w\ge 0}=(\cup_{i<0}\cp[i])^{\perp}$. % and for any $M\in \obj \cu$ any choice of its weight complex $t(M)$ is an object of the localizing subcategory of  $K(\hw)$ (cf. Proposition \ref{ppcoprws}(\ref{icoprhw})) generated by $\obj \hw$.

\item\label{itnp2} Let $H$ be a cohomological  functor from $\cu$ into an abelian category $\au$ that converts all small coproducts into products. Then it is pure if and only if it kills $\cup_{i\neq 0}\cp[i]$.

%+respects coproducts??!! 
\item\label{itnwe} Let $F:\cu\to \du$ be an exact functor that respects coproducts, where $\du$ is a triangulated category %that is closed with respect to small coproducts and 
 endowed with a smashing weight structure $v$. Then $F$ is weight-exact if and only if it sends $\cp$ into $\du_{v=0}$. %lwe,rwe??

\item\label{itnp3} The category $\hrt\subset \cu$ of $w$-pure representable functors from $\cu$ (so, we identify an object of $\hrt$ with the functor from $\cu$ that it represents)  is equivalent to the category $\au_{\cp}$ of %preadditive?!
 additive contravariant functors from $\hu$ into $\ab$ (i.e., we take those functors that respect the addition of morphisms).\footnote{%As explained 
According to Proposition 4.3.3 of \cite{bwcp}, the category $\hrt$ is actually the {\it heart of a $t$-structure} on $\cu$;  whence the notation.} 

%Thus this category
Moreover, $\aucp$ (and so also $\hrt$) is Grothendieck abelian, has enough projectives, and  an injective cogenerator; we will choose one and write $I$ for it. %and 

Furthermore, restricting functors representable by elements of $\cp$ to $\hw$ one obtains a %full embedding
  fully faithful functor $\cacp:\hw\to \aucp$ whose essential image is the subcategory of projective objects of $\aucp$.

\item\label{itnp4} %Let us write %$H_I$ for the representable functor $\cu(-,I)$ and $\cacp$ 
  %$\cacp$ for the Yoneda-type %embedding 
  % functor $Q\mapsto (P\mapsto \cu(P,Q)):\ \hw\to \aucp$ (where $P$ runs through $\cp$). Then the 
   The following assumptions on an object $M$ of $\cu$  are equivalent.

(i). $t(M)\in K(\hw)_{\wstu\ge 0}$. %??? (cf. Remark \ref{rwc}(\ref{irwco})).

(ii). $H_j^{\cacp}(M)=0$ for $j<0$ (here we use the notation from Theorem \ref{tpure}(3)).

(iii). $M\perp (\cup_{j<0}\{I[j]\})$.

\item\label{itnpnd} Assume that there exists an integer $j>0$ such that $\cp\perp \cup_{i\ge j}\cp[-i]$. Then $w$ is  non-degenerate. %, i.e., its weight-degenerate objects are zero. 
\end{enumerate}
\end{theo}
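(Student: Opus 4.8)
The plan is to deduce non-degeneracy from right non-degeneracy alone, since $w$ is already known to be left non-degenerate by assertion (\ref{itnp1}). Accordingly I would fix a right $w$-degenerate object $M$, i.e. one lying in $\cap_{i\in\z}\cu_{w\le i}$, and show $M=0$. The detection step is formal: the class ${}^{\perp}\{M[l]:l\in\z\}=\{N\in\obj\cu:\cu(N,M[l])=\ns\ \forall l\in\z\}$ is a localizing subcategory of $\cu$ --- being a left orthogonal to a shift-stable family it is triangulated and thick, and it is closed under coproducts since $\cu(\coprod N_\alpha,Y)=\prod_\alpha\cu(N_\alpha,Y)$. Hence as soon as it contains $\cp$ it must equal all of $\cu$, because $\cp$ generates $\cu$ as a localizing subcategory; taking $N=M$ then forces $\cu(M,M)=\ns$ and so $M=0$. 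Thus it remains to prove $\cu(P[k],M)=\cu(P,M[-k])=\ns$ for every $P\in\cp$ and $k\in\z$.

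The hypothesis $\cp\perp\cup_{i\ge j}\cp[-i]$ reads precisely as $\cu(P,P'[l])=\ns$ for all $P,P'\in\cp$ and all $l\le -j$. The key point is then to consider, for a fixed $P\in\cp$, the class $\mathcal V_P=\{X\in\obj\cu:\cu(P,X)=\ns\}$. It contains $0$ and all the objects $\cp[l]$ with $l\le -j$; it is closed under coproducts because $P$ is compact, so $\cu(P,-)$ preserves coproducts; and it is closed under extensions because $\cu(P,-)$ is homological, so the middle term of a triangle with outer terms in $\mathcal V_P$ again lies in $\mathcal V_P$. Now assertion (\ref{itnp1d}) describes $\cu_{w\le 0}$ as the smallest subclass of $\obj\cu$ closed under coproducts and extensions and containing $\cp[i]$ for $i\le 0$; shifting by $[-j]$, the class $\cu_{w\le -j}=\cu_{w\le 0}[-j]$ is the smallest such class containing $\cp[i]$ for $i\le -j$. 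By minimality $\cu_{w\le -j}\subseteq\mathcal V_P$.

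To finish I would simply observe that any shift $M[-k]$ of our right $w$-degenerate object is again right $w$-degenerate, hence lies in $\cap_{i\in\z}\cu_{w\le i}\subseteq\cu_{w\le -j}\subseteq\mathcal V_P$; therefore $\cu(P,M[-k])=\ns$ for all $P\in\cp$ and $k\in\z$. By the first paragraph this yields $M=0$, so $w$ is right non-degenerate, and combined with (\ref{itnp1}) it is non-degenerate.

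I expect the only genuine content to be the second paragraph: realizing that the finite-width hypothesis on the $\cp$-Hom groups is exactly what makes the compact generator $P$ orthogonal to the entire deep truncation class $\cu_{w\le -j}$, and that this orthogonality then propagates through the coproduct-and-extension generation of $\cu_{w\le -j}$ furnished by (\ref{itnp1d}). The reduction to right non-degeneracy and the detection of vanishing by the generators are routine.
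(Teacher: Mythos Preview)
Your argument is correct and follows essentially the same route as the paper. Both reduce to right non-degeneracy (left non-degeneracy being part \ref{itnp1}), use compactness together with the coproduct-and-extension description of $\cu_{w\le 0}$ from part \ref{itnp1d} to show that every $P\in\cp$ is left orthogonal to sufficiently negative weight truncation classes, and then conclude via the standard fact that the right orthogonal to all shifts of a set of compact generators is zero (the paper cites \cite[Proposition~8.4.1]{neebook}, which is exactly what your localizing-subcategory detection step proves).
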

\begin{proof}
%Most of these statements 
Assertions \ref{itnpb}--\ref{itnp4} were mostly established in  \cite{neebook} and \cite{bsnew}; see \S3.2 of \cite{bwcp} for the detail.

Next, if $\cp\perp (\cup_{i\ge j}\cp[-i])$ then the description of $\cu_{w\ge 0}$ in assertion \ref{itnp1d} (along with the compactness of elements of $\cp$)  implies that $\cup_{i\ge j}\cp[i]\perp \cu_{w\le 0}$. Thus any right weight-degenerate element of $\cu$ belongs to $(\cup_{i\in \z}\cp[i])\perpp$.  Now, the latter class is zero since $\cu$ %equals its own localizing subcategory
 is compactly generated by $\cp$ (see the well-known Proposition 8.4.1 of \cite{neebook}); hence $w$ is right non-degenerate. %Since 
 
Lastly, $w$ is also left non-degenerate according to assertion \ref{itnp1}. %, $w$ is actually non-degenerate according to %Theorem \ref{tdegen}(II.1) (cf. also Theorem \ref{twkar}(I) along with  Proposition \ref{prwkarloc}(1).
\end{proof}

\begin{rema}\label{rnd}
1. There exist nice families of examples for part \ref{itnpnd} of our theorem. Firstly one can consider the {\it derived category} of a small {\it  differential graded category} $\bu$ (note that this construction gives all {\it algebraic triangulated categories}; see Theorem 3.8 of \cite{dgk}) such that for certain $j< 0$ we have $\bu^i(M,N)=\ns$ whenever $M,N\in \obj \bu$ and $i>0$ or $i<j$ (see loc. cit.). One can also consider the  derived category of $E$-modules where $E$ is a {\it commutative $S$-algebra} in the sense mentioned in Example 1.2.3(f) of \cite{axstab}; then one has to assume that $\pi_i(E)=\ns$ if $i<0$ (see \S7 of ibid. and Remark 4.3.4(2) of \cite{bwcp}) and if $i\gg 0$. %spectral?! infinity-examples?!

Thus one may say that the non-degeneracy of a purely compactly generated weight structure is a certain finiteness of the cohomological dimension condition. It may can be quite actual for representation theory (since it implies the conservativity of the weight complex functor and of certain weight-exact functors; see Theorem \ref{tdegen}(I.1)  and Proposition \ref{pwcons}). 

2. On the other hand, the spherical weight structure on $\shtop$ is right weight-degenerate (see Remark \ref{rshg}(\ref{irgsing}) below). Moreover,   the Chow weight structure for $\dm$ (as mentioned in Remark \ref{ray}(\ref{iray2}))  is right weight-degenerate as well (at least) if the  the base field  is "large enough"; %and the coefficient ring
  see Proposition 3.2.6 and Remark 3.2.7(2) of \cite{bwcp}. 

3. Proposition \ref{prwkarloc} easily implies that if the weight structure $w'$ as in part 3 or 4 of the proposition is non-degenerate then $F(M)=0$ under the corresponding assumptions on $M$; in the setting of %part 4 
 Remark \ref{rbinters} it follows that $M$ belongs to  $ \obj \cu_3$.

However, the author doubts that the condition $F(\cp)\perp \cup_{i\ge j}F(\cp)[-i]$ follows from $\cp\perp \cup_{i\ge j'}\cp[-i]$ (for any integer $j'>0$).
\end{rema}

Now let us relate purely compactly generated weight structures to the main definitions of the current paper.

\begin{coro}\label{cgcomp}
Adopt the notation and the assumptions of Theorem \ref{thegcomp}.

\begin{enumerate}
\item\label{ipc1}
%Then the 
The class of essentially $w$-positive objects coincides with $\perpp \cup_{j<0}\{I[j]\}$. It is also characterized by the vanishing of $H_j^{\cacp}(-)$ for $j<0$.

\item\label{ipc2}
%Then the 
The class of $w$-degenerate objects coincides with $\perpp \cup_{j\in \z}\{I[j]\}$ and also with $\perpp \cup_{j\in \z}\{\obj \hrt[j]\}$. %It is also 
 Moreover, this class is characterized by the vanishing of $H_j^{\cacp}(-)$ for all $j\in \z$.

\item\label{ipckw} A $\cu$-morphism $g$ kills weight $m$ (for some $m\in \z$) if and only if $H^m(g)=0$ for any pure representable (cohomological) functor $H$. 

\item\label{ipcww} An object $M$ of $\cu$ is without weights $m,\dots, n$ for some $m\le n\in \z$ if and only if 
$H^j(g)=0$ whenever $m\le j \le n$.

\item\label{ipc3}
%Then the class of $w$-negative objects coincides with 
$\cu_{w\le 0}=\perpp (\cup_{j>0}\obj \hrt[j])$. Moreover, this class is also annihilated by $H_i$ for all $i>0$ and for any $w$-pure homological functor $H$ from $\cu$.
%It is also characterized by the vanishing %\item\label{ipc4}
\end{enumerate}
\end{coro}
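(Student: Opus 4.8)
The plan is to read off the five assertions from three dictionaries already assembled in the paper: Theorem~\ref{twkar} together with Remark~\ref{rwkarloc}(1), which translate ``essentially $w$-positive/negative'' and ``$w$-degenerate'' into vanishing of pure (representable) functors and into the weight-complex conditions $t(M)\in K(\hw)_{\wstu\ge 0}$, $t(M)\in K(\hw)_{\wstu\le 0}$; Theorem~\ref{thegcomp}(\ref{itnp3},\ref{itnp4}), which supply the equivalence $\hrt\cong\aucp$, the Yoneda embedding $\cacp$ onto the projectives of $\aucp$, the injective cogenerator $I$, and the chain ``$t(M)\in K(\hw)_{\wstu\ge 0}\iff H^{\cacp}_j(M)=0$ for $j<0\iff M\in\perpp\cup_{j<0}\{I[j]\}$''; and the fact that $w$ is left non-degenerate (Theorem~\ref{thegcomp}(\ref{itnp1})), which via Proposition~\ref{prwkarloc}(1) forces every essentially $w$-negative object to lie in $\cu_{w\le 0}$. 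Throughout I use that here $\cu$ satisfies Brown representability and $w$ is smashing (Theorem~\ref{thegcomp}(\ref{itnpb},\ref{itnp1})), so that Remark~\ref{rwkarloc}(1) and Proposition~\ref{puredetkw}(II) apply, and that a pure representable functor is, by definition of $\hrt$, an object $X\in\hrt$ with $H^i(M)=\cu(M,X[i])$.

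For assertion~\ref{ipc1} I would just compose two equivalences: by Theorem~\ref{twkar}(II) (conditions~\ref{icwk9} and~\ref{icwk6}) an object is essentially $w$-positive exactly when $t(M)\in K(\hw)_{\wstu\ge 0}$, and by Theorem~\ref{thegcomp}(\ref{itnp4}) the latter is equivalent both to the vanishing of $H^{\cacp}_j(M)$ for $j<0$ and to $M\in\perpp\cup_{j<0}\{I[j]\}$. Assertions~\ref{ipckw} and~\ref{ipcww} are equally direct: Proposition~\ref{puredetkw}(I) reduces ``$g$ kills weight $m$'' to the vanishing of $H^m_{\ca}(g)$ for pure cohomological functors whose $\ca$ converts coproducts into products, and Proposition~\ref{puredetkw}(II) identifies these with the pure representable functors, i.e.\ with the objects of $\hrt$; assertion~\ref{ipcww} then follows by applying assertion~\ref{ipckw} to $\id_M$ in each weight $m\le j\le n$ and invoking Theorem~\ref{tprkw}(\ref{iprkws}).

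For assertion~\ref{ipc3} I would use $\cu_{w\le 0}=\perpp\cu_{w\ge 1}$ (Proposition~\ref{pbw}(\ref{iort})) together with the defining property of $\hrt$ for the easy inclusion $\cu_{w\le 0}\subseteq\perpp\cup_{j>0}\{\obj\hrt[j]\}$: each $X\in\hrt$ kills $\cu_{w\le -1}$, while $M[-j]\in\cu_{w\le -1}$ for $j>0$, so $\cu(M,X[j])=0$. The reverse inclusion is where left non-degeneracy enters: $M\in\perpp\cup_{j>0}\{\obj\hrt[j]\}$ says exactly $H^j(M)=0$ for all $j>0$ and all pure representable $H$, which by Remark~\ref{rwkarloc}(1) characterizes essentially $w$-negative objects, and these coincide with $\cu_{w\le 0}$ by Proposition~\ref{prwkarloc}(1). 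The ``moreover'' clause is the corresponding $t$-level property: $M\in\cu_{w\le 0}$ gives $t(M)\in K(\hw)_{\wstu\le 0}$ by Proposition~\ref{pwt}(\ref{iwcons}), whence $H_i(M)=0$ for $i>0$ and every pure homological $H$, since such $H$ factors through $\kw(\hw)$ (Lemma~\ref{lwwh}(\ref{iwhefu})).

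The genuinely delicate part is assertion~\ref{ipc2}, and I expect it to be the main obstacle, because passing to all $j\in\z$ must recover the ``degree-zero'' information distinguishing a $w$-degenerate object from one that is merely essentially $w$-positive \emph{and} essentially $w$-negative (their intersection only yields $t(M)\in K(\hw)_{\wstu\ge 0}\cap K(\hw)_{\wstu\le 0}=K(\hw)_{\wstu=0}$, a complex concentrated up to homotopy in a single degree, not the zero complex). My plan is as follows. The $\hrt$-characterization is Remark~\ref{rwkarloc}(1) verbatim (pure representable $H$, all $i\in\z$), and since $I\in\hrt$ it gives the inclusion into $\perpp\cup_{j\in\z}\{I[j]\}$. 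For the functor $H^{\cacp}_\ast$ the forward direction is immediate: $H^{\cacp}_j$ factors through $\kw(\hw)$ and $t(M)=0$ there. For the crucial converse, assuming $H^{\cacp}_j(M)=0$ for all $j$, the subfamily $j<0$ already forces $t(M)\in K(\hw)_{\wstu\ge 0}$ by Theorem~\ref{thegcomp}(\ref{itnp4}), so $t(M)$ is represented by a complex $C$ concentrated in degrees $\le 0$; then $\cacp(C)$ is a \emph{bounded-above} acyclic complex of projectives of $\aucp$ (acyclic exactly because $H^{\cacp}_j(M)=0$ for every $j$), hence contractible, and since $\cacp$ is a full embedding (Theorem~\ref{thegcomp}(\ref{itnp3})) its contraction lifts uniquely to $C$, giving $t(M)=0$, i.e.\ $M$ is $w$-degenerate. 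Finally the equivalence with $M\in\perpp\cup_{j\in\z}\{I[j]\}$ is the injective-cogenerator mechanism behind Theorem~\ref{thegcomp}(\ref{itnp4}): injectivity of $I$ computes $\cu(M,I[j])$ (up to reindexing) as $\aucp(H^{\cacp}_{-j}(M),I)$, and as $I$ cogenerates $\aucp$ this vanishes for all $j$ precisely when $H^{\cacp}_j(M)=0$ for all $j$. The subtle point throughout is that the one-sided result~\ref{itnp4} must be upgraded to an all-degree statement, and it is exactly the combination ``boundedness coming from $K(\hw)_{\wstu\ge 0}$'' plus ``faithfulness of $\cacp$ (equivalently, $I$ being a cogenerator)'' that effects this upgrade.
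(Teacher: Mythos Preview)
Your arguments for assertions~\ref{ipc1}, \ref{ipckw}, \ref{ipcww}, and \ref{ipc3} are correct and match the paper's proof almost verbatim (the paper cites Theorem~\ref{tpwckill}(\ref{iwckill1},\ref{iwckill3}) with Lemma~\ref{lwwh}(\ref{irwc3}) for~\ref{ipcww} where you cite Theorem~\ref{tprkw}(\ref{iprkws}), but these are equivalent routes).

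For assertion~\ref{ipc2} your proof is correct but takes a genuinely different and more laborious path than the paper. You treat it as the ``delicate'' step and argue directly at the level of $K(\hw)$: from $H^{\cacp}_j(M)=0$ for all $j$ you first extract $t(M)\in K(\hw)_{\wstu\ge 0}$, then represent $t(M)$ by a complex concentrated in nonpositive degrees, observe that its image under $\cacp$ is a bounded-above acyclic complex of projectives (hence contractible), and lift the contraction back via full faithfulness of $\cacp$. The paper instead dispatches~\ref{ipc2} in two lines by reducing it to~\ref{ipc1} applied to every shift: since $w$ is left non-degenerate, Proposition~\ref{prwkarloc}(1) gives both that ``essentially $w$-positive'' coincides with $\cu_{w\ge 0}$ and that ``$w$-degenerate'' coincides with ``right $w$-degenerate''; hence $M$ is $w$-degenerate iff $M[j]\in\cu_{w\ge 0}$ for all $j$, iff $M[j]$ is essentially $w$-positive for all $j$, and now assertion~\ref{ipc1} (already proved) gives all three characterizations at once. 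Your approach has the merit of making explicit the homological-algebra mechanism (contractibility of bounded-above acyclic projective complexes) and does not invoke left non-degeneracy for the $H^{\cacp}$-direction, but the paper's reduction is considerably shorter and avoids the somewhat hand-wavy appeal to the formula $\cu(M,I[j])\cong\aucp(H^{\cacp}_{-j}(M),I)$ that you need at the end.
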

\begin{proof}
\ref{ipc1}.  According to Theorem \ref{twkar}(II), an object $M$ of $\cu$ is essentially $w$-positive if and only if $t(M)\in K(\hw)_{\wstu\ge 0}$. Combining this fact with   Theorem \ref{thegcomp}(\ref{itnp4}) we obtain our assertion.

\ref{ipc2}. Since $w$ is left non-degenerate (see Theorem \ref{thegcomp}(\ref{itnp1})), an object $M$ of $\cu$ is 
essentially $w$-positive if and only if it belongs to $\cu_{w\ge 0}$ (see Proposition \ref{prwkarloc}(1)). On the other hand, 
$M$ is weight-degenerate if and only if it is right weight-degenerate; % (see part 1 of that remark); 
 hence $M$ is weight-degenerate if and only if $M[j]$ is essentially $w$-positive  for all $j\in \z$. Hence our assertion follows from the previous one. 

\ref{ipckw}. Since $w$ is smashing and $\cu$ satisfies the  Brown representability property (see Theorem \ref{thegcomp}(\ref{itnpb},\ref{itnp1})), the assertion follows from Proposition \ref{puredetkw} immediately.

  \ref{ipcww}. This is a straightforward consequence of the previous assertion combined with %Proposition 
	 Theorem \ref{tpwckill}(\ref{iwckill1},\ref{iwckill3}) along with  Lemma \ref{lwwh}(\ref{irwc3}).

\ref{ipc3}. Since $w$ is left non-degenerate, $\cu_{w\le 0}$ coincides with the class of all essentially $w$-negative objects  according to Proposition \ref{prwkarloc}(1). Thus Theorem \ref{twkar}(I) (along with Remark \ref{rwkarloc})  gives the result in question. 
\end{proof}

Now let us demonstrate that one can say more on this setting if an additional assumption is imposed.

\begin{theo}\label{tshgp1}
Adopt the notation and assumptions of Theorem \ref{thegcomp}, and suppose in addition that the category $\aucp$ is of projective dimension at most $1$  (i.e., any its object has a projective resolution of length $1$). Let $m\le n\in \z$ and $g\in \cu(E,E')$.

Then the following statements are valid.

\begin{enumerate}
\item\label{itpwc1}
The category $\kw(\hw)$ equals  $K(\hw)$, and %the weight complex functor $t: \cu\to K(\hw)$ is exact. Moreover, 
 the natural functor $K(\hw)\to D(\aucp)$ is an equivalence.

%split the category?!
\item\label{itpwc2} For any objects $C$ and $C'$ in $D(\aucp)$ we have natural isomorphisms $C\cong \coprod H_j(C)[j]$ %define???!!
and $$D(\aucp)(C,C')\cong \prod_{j\in \z}\aucp(H_j(C),H_j(C'))\bigoplus %\coprod
(\prod_{j\in \z}\extaucp(H_j(C),H_{j+1}(C'))).$$
%All objects of $D(\aucp)$ split is direct sums of their homology, and  %$t(g)$ gives a well-defined class in $\aucp()$. %$\exto$?? whe to 0 when?!

\item\label{itpkw} $g$ kills weight $m$ if and only if $H_m^{\cacp}(g)=0$, the class of $t(g)$ in the group $\extaucp(H_{m-1}(t(E)),H_{m}(t(E')))$ (%natural cycle class for any abelian category???! 
here we use the identification provided by the previous two assertions) vanishes, and the morphism $H_{m-1}^{\cacp}(g)$ factors through a projective object of $\aucp$. % $H_n^{\cacp}(g)$ "lifts"?!
%(see the previous assertion) vanishes.

\item\label{itpww} $E$ is without weights $m,\dots,n$ (resp. $E\in \cu_{w\le m-1}$) if and only if %the $\aucp$-homology  (see assertion \ref{itpwc1})  $H_i(t(E))=0$ for $m\le i \le n$  and $H_{m-1}(t(E))$ is a projective object of $\aucp$.
$H_j^{\cacp}(E)=0$ for $m\le j \le n$ (resp. for $j\ge m$) and $H_{m-1}^{\cacp}(E)$ is a projective object of $\aucp$.
%\item\label{itpbab} and is a projective object of $\aucp$.
\end{enumerate}
\end{theo}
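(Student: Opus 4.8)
The plan is to push the entire theorem through the identification of $K(\hw)$ with the derived category $D(\aucp)$, proving the assertions in the order \ref{itpwc2}, \ref{itpwc1}, \ref{itpkw}, \ref{itpww}. The standing input is Theorem \ref{thegcomp}(\ref{itnp3}): $\cacp$ embeds $\hw$ onto the projective objects of $\aucp$, and $\aucp$ is Grothendieck with enough projectives; the extra hypothesis says exactly that $\aucp$ is hereditary, i.e. $\operatorname{Ext}^{\ge 2}_{\aucp}$ vanishes. Recall also that $\cu$ is Karoubian (Theorem \ref{thegcomp}(\ref{itnpb})), hence weight-Karoubian, so that Theorem \ref{tpwckill}(\ref{iwckill4}) applies.

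Assertion \ref{itpwc2} is hereditary-category yoga. The splitting $C\cong\coprod_j H_j(C)[j]$ follows because the obstruction to peeling off the homology in one degree lives in $\operatorname{Ext}^2_{\aucp}=0$; for unbounded $C$ I would assemble these degreewise splittings using that $\aucp$ is Grothendieck. For the Hom formula I would compute $D(\aucp)(C,C')=\prod_j D(\aucp)(H_j(C)[j],C')$, observe that $H_j(C')$ is a single object (homology commutes with coproducts), and evaluate $D(\aucp)(H_j(C)[j],C')$ through the length-$1$ projective resolution of $H_j(C)$; projectivity kills the higher terms and leaves precisely $\aucp(H_j(C),H_j(C'))\oplus\extaucp(H_j(C),H_{j+1}(C'))$, with the outer product over $j$. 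For assertion \ref{itpwc1} the comparison $K(\hw)=K(\operatorname{proj}\aucp)\to D(\aucp)$ is then essentially surjective by \ref{itpwc2} (each $H_j(C)$ has a length-$1$ projective resolution, so $\coprod_j H_j(C)[j]$ is represented by a complex of projectives) and fully faithful because finite projective dimension makes every complex of projectives homotopy-projective, so quasi-isomorphisms between them are homotopy equivalences.

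The technical core I would isolate as a \emph{degreewise criterion} proved purely inside $K(\hw)=D(\aucp)$: the class of a chain map $f$ is represented by a chain map with vanishing term in (homological) degree $i$ if and only if $H_i(f)=0$, the associated class in $\extaucp(H_{i-1},H_i')$ vanishes, and $H_{i-1}(f)$ factors through a projective object. To prove it, model $t(E)$ and $t(E')$ by the resolutions of their shifted homology; the degree-$i$ term of the model receives contributions only from the $P_0$ of $H_i$ and the $P_1$ of $H_{i-1}$, so the degree-$i$ component of $f$ is a block-triangular matrix whose three relevant entries are a lift of $H_i(f)$, the off-diagonal entry carrying the extension class, and a lift of $H_{i-1}(f)$. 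I would then show each entry is killable by a chain homotopy exactly under the corresponding condition: the $H_i$-entry because a lift of the zero map $H_i\to H_i'$ has image in $\operatorname{im}(d')$ and so factors through $d'$; the off-diagonal entry by definition of its extension class; and the $H_{i-1}$-entry because a resolution map homotopic to one supported in a single degree induces a homology map factoring through $P_0'$, and conversely. The delicate point, which I would check carefully, is that these three homotopies occupy disjoint matrix positions and can be applied simultaneously. Granting the criterion, assertion \ref{itpkw} is its instance at $i=m$ combined with Theorem \ref{tpwckill}(\ref{iwckill1}) (matching cohomological degree $-m$ with homological degree $m$); and $\kw(\hw)=K(\hw)$ of \ref{itpwc1} follows as well, since by Lemma \ref{lwwh}(\ref{irwc3},\ref{irwc4}) a weakly null-homotopic $f$ is, for \emph{every} $i$, homotopic to a chain map with zero $i$-th term, whence the criterion forces all homology maps and all extension classes of $f$ to vanish, i.e. $f=0$ in $K(\hw)$ by \ref{itpwc2} (the projection $K(\hw)\to\kw(\hw)$ being full and conservative by Lemma \ref{lwwh}(\ref{iwhecat}), only faithfulness was missing).

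Finally, assertion \ref{itpww} follows by the same projective-resolution bookkeeping. By Theorem \ref{tpwckill}(\ref{iwckill4}), $E$ is without weights $m,\dots,n$ iff $t(E)$ is homotopy equivalent to a complex of projectives with zero terms in homological degrees $m,\dots,n$; in the resolution model the degree-$k$ term is the $P_0$ of $H_k$ plus the $P_1$ of $H_{k-1}$, so its vanishing for $m\le k\le n$ is equivalent to $H_j^{\cacp}(E)=0$ for $m\le j\le n$ together with $H_{m-1}^{\cacp}(E)$ being projective. For the variant, $E\in\cu_{w\le m-1}$ is equivalent to $t(E)\in K(\hw)_{\wstu\le m-1}$ (using left non-degeneracy of $w$ and Corollary \ref{cgcomp}(\ref{ipc3}) via Theorem \ref{twkar}), i.e. to a projective complex concentrated in degrees $\le m-1$; this forces $H_j^{\cacp}(E)=0$ for $j\ge m$ and, since a subobject of the projective term in degree $m-1$ is projective in the hereditary category $\aucp$, also $H_{m-1}^{\cacp}(E)$ projective. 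I expect the degreewise criterion of the third paragraph, and the verification that its three homotopies are independent, to be the main obstacle.
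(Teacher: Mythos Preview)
Your proposal is correct and follows essentially the same route as the paper: both reduce assertion \ref{itpkw} to the hereditary splitting $t(E)\cong\coprod_j H_j(t(E))[j]$ (modeled by length-$1$ projective resolutions) and then analyze the degree-$m$ component of $t(g)$ piece by piece---your three block-triangular matrix entries are exactly the paper's three nontrivial summand-pairs $(i,j)\in\{(m,m),(m-1,m),(m-1,m-1)\}$, with $(m,m-1)$ trivially orthogonal. Two minor presentational differences are worth noting: the paper simply cites an external reference for $\kw(\hw)=K(\hw)$ rather than deriving it from the degreewise criterion as you do; and for assertion \ref{itpww} the paper applies assertion \ref{itpkw} directly to $g=\id_E$ (observing that the extension classes of $t(\id_E)$ vanish automatically and that $H_{m-1}^{\cacp}(\id_E)$ factors through a projective iff $H_{m-1}^{\cacp}(E)$ is itself projective), which is a shade slicker than redoing the resolution bookkeeping from scratch.
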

\begin{proof}
 \ref{itpwc1}. %All these 
These statements easily follow from Theorem \ref{thegcomp}(\ref{itnp3}) according to Remark 3.3.4 of \cite{bws} (that mostly relies on basic properties of derived categories; cf. (\ref{epres}) below). %; the corresponding argument %combined with some  is mostly basics on derived categories. % ; see  . %{vbook} %functorially splits as the coproduct of?! Cf. the proof of assertion {itpkw}??

\ref{itpwc2}. %Similarly to the argument needed to prove the previous assertion, one
The first splitting statement is well-known, %apply triangulated cohomological dimension 1 orthogonality?! use the projective description?!
 and one easily obtains the second one after noting that higher extension groups in the category  $\aucp$ vanish.

\ref{itpkw}. Once again, we should check whether $t(g)\backsim_{[-m,-m]} 0$ (see %Proposition 
 Theorem \ref{tpwckill}(\ref{iwckill1}))

Now we translate the splitting of $t(E)$ provided by previous assertions into  %the language?? 
\begin{equation}\label{epres}
t(E)\cong \coprod_{i\in \z}\co(A_{i}\stackrel {f_i}\to B_i)[i],\end{equation}
 where $f_i$ are certain $\hw$-morphisms %nado??
 that %become 
  are monomorphisms (between  projective objects) in $\aucp$; similarly, we consider
$t(E')\cong \coprod_{j\in \z}\co(A'_{j}\stackrel {f'_j}\to B'_j)[j]$. Obviously, all of the assumptions on $g$ considered in this assertion can be checked "summandwisely", i.e., we can and will assume that both $E$ and $E'$ are equal to single summands of this sort. % single length one complexes of this sort (i.e., that these summands are zero for all $i$ and $j$ besides a single pair of indices).

Next, all of our conditions are obviously fulfilled for these "morphisms of summands" if $\{i,j\}\not\subset \{m,m-1\}$. 
Moreover, we have $\co(A_{m}\stackrel {f_{m}}\to B_{m})[m]\perp \co(A'_{m-1}\stackrel {f'_{m-1}}\to B'_{m-1})[m-1]$ (since $f'_{m-1}$ becomes monomorphic in $\aucp$; one can make this calculation in $D(\aucp)$); thus any morphism between these two objects kills weight $m$ automatically. 

It remains to verify that the three remaining cases of $(i,j)$ give our three conditions on $g$. 

If $(i,j)=(m,m-1)$ then the previous assertion implies that  $$K(\hw)(t(E),t(E'))\cong  \extaucp (H_{m-1}^{\cacp}(E), H_{m}^{\cacp}(E')).$$ Moreover, since $t(g)\backsim_{[l,l]} 0$ for  all $l\neq -m$, $t(g)$ is zero (in $\kw(\hw)=K(\hw)$) if and only if $g$ kills weight $m$. %Since the group %$K(\hw)(\co(A_{m-1}\stackrel {f_{m-1}}\to B_{m-1})[m-1], \co(A'_{m}\stackrel {f'_{m}}\to B'_{m})[m])\cong \ext_{\aucp} (H_{m-1}^{\cacp}(E), H_{m}^{\cacp}(E'))$ (see assertion \ref{itpwc2}), we obtain that (
 Hence in this case $t(g)$ kills weight $m$ if and only if the class of $t(g)$ in %this extension group 
$\extaucp(H_{m-1}^{\cacp}(E), H_{m}^{\cacp}(E'))$ vanishes. 

Next, if $g$ kills weight $m$ then we have $H_{m}^{\cacp}(g)=0$ according to  %Corollary \ref{cgcomp}().  
 Proposition \ref {puredetkw}(I).
  Conversely, if $(i,j)=(m,m)$ then  $K(\hw)(t(E),t(E'))\cong  \aucp (H_{m}^{\cacp}(E), H_{m}^{\cacp}(E'))$. Thus if $H_{m}^{\cacp}(g)=0$ then $t(g)=0$ in this case; hence $t(g)\backsim_{[-m,-m]} 0$. %$g$ kills weight $m$.

It remains to consider the case $(i,j)=(m-1,m-1)$. By definition, $t(g)\backsim_{[-m,-m]} 0$ if and only if % it %equals (in $K(\hw)$) a morphism $(h^i):t(E)\to t(E')$ with $h^{-m}=0$
 $t(g)$ factors through the obvious morphism $B'_{m-1}[m-1]\to t(E')$. The latter condition clearly implies that $H_{m-1}^{\cacp}(g)$ factors through $B'_{m-1}$. Conversely, if $H_{m-1}^{\cacp}(g)$ factors through a projective object of $\aucp$ then $t(g)$ factors through an object of $\hw[m-1]$ (note that $\aucp$ embeds into $K(\hw)_{\wstu\ge m-1}$ via projective resolutions). Thus $t(g)$ factors through $B'_{m-1}[m-1]$ according to Proposition \ref{pbw}(\ref{ifact}) (applied to $K(\hw)$ endowed with the stupid weight structure).

Assertion \ref{itpww} follows from the previous one more or less easily. %Arguing as in the proof of Corollary \ref{cgcomp}(\ref{ipcww}) we obtain that 
Combining Theorem \ref{twkar}(II) (see condition \ref{icwkwp} in it) with %Corollary \ref{cwkar} 
 Proposition \ref{prwkarloc}(1) we obtain that  $E$ belongs to $ \cu_{w\le m-1}$ if and only if $\id_E$ kills weight $i$ for all $i\ge m$.  Recall also that $E$ is without weights $m,\dots,n$ if and only if $\id_E$ kills weight $i$ for $m\le i \le n$ (see Theorem \ref{tprkw}(\ref{iprkws})). Hence it remains to note that for any $i\in \z$ the morphism $H_i^{\cacp}(\id_E)$  clearly equals the corresponding identity, whereas the classes of $t(\id_E)$ in all the groups $\extaucp(H_i(E),H_{i+1}(E))$ (see assertion \ref{itpwc2}) are zero.
%We note that Immediate from the previous assertion. {tprkw} {iprkw1}
\end{proof}

\begin{rema}\label{rtate}
1. Let us describe a motivic example to our theorem.

Inside the category $\dm$ of  motives (with integral coefficients) over any perfect field % (either the effective or stable one; 
(see \S4.2 of \cite{degmod} or \cite[\S1.3,\S3.1]{bokum})  one can take  $\cu$ to be its  localizing subcategory $DTM$  generated by the Tate motives $\z(i)$ for $i\in\z$. % (i.e., by its smallest triangulated subcategory containing all $\z(i)$ and closed with respect to small coproducts).  %Since motives of the form $\z(i)[2i]$ (for $i\in\z$) are Chow ones, the Chow weight structure on 

%This category possesses two important weight structures: the heart of the first (''Chow") one is %generated by 
 %the big hull of $\z(i)[2i]$, %(i.e. consists of retracts of coproducts of families of $\z(i)[2i]$; actually, retracts are not necessary here)
%and this weight structure is "compatible" with Chow weight structure for the whole $\dm$ (see Remark 3.1.2 %of \cite{bws} or of \cite{bokum});   the second ("Gersten"; cf. Proposition 5.2.6 of \cite{bgn}) heart is %weakly???? generated by $\z(i)[i]$. 
	
	Now,  the category $\cp=\{\z(i)[2i],\ i\in \z\}$ is connective in $\dm$, %we have 
	$\z(i)[2i]\perp \z(j)[2j]$ for any $i\neq j\in \z$, whereas  $\dm(\z(i)[2i],\z(i)[2i])\cong \z$ for all $i$;  %immediately from 
	 see Corollary 6.7.3 of \cite{bev}. % we obtain that
	 Hence %for $\cp=\{\z(i)[2i],\ i\in \z\}$
	 we have $\aucp\cong \ab^{\z}$ in this case; thus $\aucp$ is of projective dimension $1$.
	
	Moreover, one can take any Dedekind domain or a field for the  coefficient ring in this example.

2. The case $E\in \cu_{w\le m-1}$ in part \ref{itpww} of our theorem is an abstract generalization of Proposition 6.16 %6.3.16 %
of \cite{marg}  %Proposition 6.8 of \cite{christ} 
(where $\cu=\shtop$ was considered; see Theorem  \ref{tsh} below). Respectively, Proposition 6.17 of loc. cit. is the corresponding case of the orthogonality axiom for the weight structure $\wsp$. Note however that the methods of loc. cit. do not seem to extend to the setting of Theorem \ref{tshg} (for a general $G$) and Theorem \ref{thegcomp}. Moreover, Proposition 7.1.2(a) of \cite{axstab} essentially gives the existence of weight Postnikov towers corresponding to a certain weight structure $w$ only for $w$-bounded below objects (cf. Remark 4.3.4(2) of \cite{bwcp}).

3. The author suspects that the functor $t: \cu\to K(\hw)$ is actually exact in this case. Most probably this conjecture can be proven similarly to Corollary 3.5 of \cite{sosnwc}.
\end{rema}

\subsection{On equivariant spherical weight structures and stable Hurewicz theorems}\label{shur}

Now let us generalize Theorem  4.1.1 of \cite{bwcp} %advise to consult??
 to the setting of $\lam$-linear equivariant stable homotopy categories and add several new statements. We will need some notation.

\begin{itemize}
\item  We choose  a set of prime numbers $S\subset \z$; denote $\z[S\ob]$ by $\lam$. $S$ may be empty; then $\lam=\z$ (and the reader may confine herself to this important case). %We will usually omit $S$ (and $\lam$) in the notation.

\item $G$ will be a (fixed) compact Lie group; we will write $\shg$ for the stable homotopy category of $G$-spectra indexed by a complete $G$-universe.

\item Denote the subcategory of  $\lam$-linear objects of $\shg$ %will be denoted 
 by $\shgl$. According to Proposition A.2.8 of \cite{kellyth}, $\shgl$ is a triangulated subcategory of $\shg$ and there exists an exact left adjoint $\ls=(-)[S\ob]$ to the embedding $\shgl\to \shg$. 

\item We take $\cp$ to be the set of spectra of the form %$G/H^+$
$\ls(S_H^0)$, where $H$ is a closed subgroup of $G$ (cf. Definition I.4.3 of \cite{lms}; recall that $S_H^0$ is constructed starting from the $G$-space $G/H$). We will %use the notation
 write  $\hu$ for the corresponding (preadditive) subcategory of $\shgl$. Respectively, if $\lam=\z$ then $\hu$  is the (stable) {\it  orbit category} of ibid. %p. 238??

Recall also that $S_H^0[n]\in \obj \shg$ is the corresponding sphere spectrum $S_H^n$ essentially by definition (see loc. cit.). 

\item The equivariant homotopy groups of an object $E$ of $\shg\supset \shgl$ are defined as $\pi_n^H(E)=\shg(S_H^n,E)$ (for all $n\in \z$; see %Definition I.4.4(i) and \S I.5
\S I.6  and Definition I.4.4(i) of ibid.). 

\item We will write $\emg$ for the full subcategory of $\shgl$ whose object class is $(\cup_{i\in \z\setminus \ns} \cp[i])^{\perp}$ (in the case $\lam=\z$ the  objects of $\emg$ are {\it Eilenberg-MacLane} $G$-spectra; see \S XIII.4 of \cite{mayeq}).

\item We will write $\macg$ for the category of %preadditive
 additive contravariant functors from $\hu$ into $\ab$ (cf. Theorem \ref{thegcomp}(\ref{itnp3})); we will call its objects %are the %??? 
{\it Mackey functors} (cf. loc. cit.). Respectively, $\cacp$ will denote the Yoneda embedding%????
  $\hu\to \macg$. 

\item We will call $\perpp (\cup_{i\in \z}\obj \emg[i])\subset \obj \shgl $ the class of {\it acyclic} spectra (i.e., a spectrum is acyclic if it is annihilated by $H^i$ for all $H$  represented by  $\lam$-linear Eilenberg-MacLane spectra and $i\in \z$).
%say that a a $G$-spectrum $E$ is {\it acyclic} if 

\item Let us now give references to those weight structure definitions that are necessary to understand the theorem below. %Weight structures are defined in 
 The most basic of those are Definitions \ref{dwstr} and \ref{dwso}(\ref{idh},\ref{id=i},\ref{ilrd},\ref{iwnlrd}); pure functors are defined in Definition \ref{dpure}; weight-degenerate objects are defined  in Definition \ref{dwdegen},    essentially weight-positive objects are defined  in %Definition \ref{dewpn}
 Definition \ref{dpkar}(4); objects without weights $m,\dots,n$ (along with morphisms killing weights) are defined in Definition \ref{dkw}. 
\end{itemize}

Now we demonstrate that our theory gives several interesting properties of $\shg\supset \shgl$ (and one can assume that $S$ is empty, $\lam=\z$, and $\shgl=\shg$). The proofs of our statements are rather easy consequences of the general theory. %????; they also have much in common with each other.

\begin{theo}\label{tshg}
Let $n\in \z$, $h\in \cu(E,E')$ (for $E$ and $E'$ being  objects of $\shg$). %closed subgroup $H$ of $G$??? 
Then the following statements are valid.

\begin{enumerate}
\item\label{itgl} One can assume that the functor $\ls$ is identical on the subcategory $\shgl$ of $ \shg$. Moreover,  for any closed subgroup $H$ of $G$ and an object $C$ of $\shg$ %, and $n\in \z$ 
 we have  $\shg(\ls(S_H^n),\ls(C)) =\shgl(\ls(S_H^n),\ls(C))\cong \shg(S_H^n,\ls(C))\cong \pi_n^H(C)\otimes_{\z}\lam$.

\item\label{itgfine}
The category $\cu=\shgl$ and the class  $\cp$ specified above %being the set of spectra of the form 
 satisfy the assumptions of Theorem \ref{thegcomp}. Thus $\cp$ gives a weight structure $\wgl$ on $\shgl$ whose heart consists of retracts of coproducts of elements of $\cp$ and equivalent to the  formal coproductive hull of $\hu$. %\cpg, \hug?!!

\item\label{itgconn}
The class of $n-1$-connected $\lam$-linear spectra  (see Definition I.4.4(iii) of \cite{lms}; i.e., this is the class $(\cup_{i<n}\cp[i])\perpp$) coincides with $\shgl_{\wgl\ge n}$. In particular, $\shgl_{\wgl\ge 0}$ is the class of $\lam$-linear {\it connective} spectra. % and $\wg$-bounded below objects are the %connected  bounded below spectra of loc. cit.

Moreover, $\shgl_{\wgl\ge 0}$ is the smallest class of objects of $\shgl$ that contains $\cup_{i\ge 0}\cp[i]$ %for $i\ge 0$ 
 and is closed with respect to coproducts and extensions.
 
\item\label{itgpure} A (co)homological functor from $\shgl$ into an abelian category $\au$ that respects coproducts (resp. converts them into products) is $\wgl$-pure if and only if it kills $\cup_{i\neq 0}\cp[i]$. %?? (cf. the dimension axiom (XIII.4.4) of \cite{mayeq}). 

In particular, all %Eilenberg-MacLane $G$-spectra 
 objects of $\emg$ represent $\wgl$-pure functors. %Thus the cohomology functors $H_G^0(-,M)$ (for a Mackey functor $M$) defined in \S XIII.4 of ibid. also are. The same thing: R?!
%The pure functor %standard homology?! "172"

\item\label{itgemg} The category $\emg$ is naturally %isomorphic 
  equivalent to $\macg$ in the obvious way; thus $\emg$ is Grothendieck abelian and has an injective cogenerator $I$.

\item\label{itgdeg} 
The following  conditions are  equivalent.

(i). $E$ is acyclic.

(ii). $E$ is %weight
$\wgl$-degenerate.

(iii). $E$ is right %weight
 $\wgl$-degenerate.

(iv). $E\perp \shgl_{\wgl\ge i}$ for any $i\in \z$.

(v).  $H^{\cacp}_i(E)=0$ for any $i\in \z$.

(vi). $E\perp (\cup_{i\in \z}I[i])$.

(vii). $E$ is annihilated by any $\wgl$-pure homological functor from $\shgl$.

\item\label{itgkw} The following statements are  equivalent as well.

(i). $h$ kills weight $0$. 

(ii). $H(h)=0$ for any  $\wgl$-pure homological functor $H$ from $\shgl$.

(iii). $P(h)=0$ for any  $\wgl$-pure cohomological functor $P$ from $\shgl$.

(iv). $P(h)=0$ for any  %pure cohomological 
 functor %of the form
 $P=\shgl(-,J)$, where $J\in \obj \emg$.

\item\label{itghur} The following  conditions are  equivalent.

(i). $E$ is essentially $\wgl$-positive.

(ii). $E$ is an extension of a connective spectrum by an acyclic one.

(iii). $H_j(E)=0$ for any  $\wgl$-pure homological functor $H$ from $\shgl$ and $j<0$.

(iv). $P^j(E)=0$ for any  $\wgl$-pure cohomological functor $P$ from $\shgl$ and $j<0$.

(v). $E\perp I[j]$ for all $j<0$.

(vi). $H_j^{\cacp}(E)=0$ for all $j<0$.

\item\label{itgneg} $ \shgl_{\wgl\le 0}$  is the smallest subclass of $\obj \shgl$ that is closed with respect to coproducts, extensions, and contains $\cp[i]$ for $i\le 0$. This class also equals  $\perpp (\cup_{i\ge 0}\obj \emg[i])=   \perpp \shgl_{\wgl\ge 1}$;\footnote{Recall here $\shg_{\wgl\ge 1}$ is the class of $0$-connected $\lam$-linear $G$-spectra.} %this class is also 
 moreover, it is annihilated by $H_i$ for all $i>0$ and for any pure homological functor $H$ from $\shgl$.

% (cf. assertion \ref{itgconn} for the discussion of the class.

%\item\label{itgbo} %which bounds?! $I$-ones separately; Proposition \ref{pgcomp}(\ref{ipc1}). Put in R?!

%????\item\label{itghur}

\item\label{itgww} Let $m\le n\in \z$. Then the following  conditions are  equivalent.

(i). $E$ %is without weights 
 avoids weights $m\dots n$.

(ii). There exists a distinguished triangle $E_1\to E\to E_2\to E_1[1]$ such that $E_1\in \shgl_{\wgl\le m-1}$ and $E_2\in \shgl_{\wgl\ge n+1}$. Moreover, if this is the case then this triangle is canonically determined by $E$.

(iii).   $E\perp  (\cup_{m\le i \le n}\obj \emg[-i])$.

(iv).  $E$ is  annihilated by $H_{i}$ whenever $m\le i\le n$  and $H$ is a  pure homological functor from $\shgl$.
\end{enumerate}
\end{theo}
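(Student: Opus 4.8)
The plan is to take condition (i) as the hub and deduce each of (ii), (iii), (iv) from it, feeding the general machinery of \S\ref{skw}--\S\ref{smashs} into the dictionary already assembled in this theorem. The one structural fact to record at the outset is that $\cu=\shgl$ is weight-Karoubian: its hypotheses for Theorem \ref{thegcomp} hold by part (\ref{itgfine}), so $\cu$ is Karoubian by Theorem \ref{thegcomp}(\ref{itnpb}), whence $\hw$ is Karoubian by Proposition \ref{pbw}(\ref{ikwkar}).

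First I would establish (i) $\Leftrightarrow$ (ii). Since $\cu$ is weight-Karoubian, Theorem \ref{tprkw}(\ref{iwildef2}) guarantees that any $E$ avoiding weights $m,\dots,n$ admits a decomposition avoiding these weights, that is, a distinguished triangle as in (ii) with $E_1\in\shgl_{\wgl\le m-1}$ and $E_2\in\shgl_{\wgl\ge n+1}$. Conversely, Theorem \ref{tprkw}(\ref{iwildef1}) shows that the mere existence of such a triangle forces $E$ to avoid weights $m,\dots,n$ and, in addition, supplies the asserted canonicity (this is exactly the uniqueness clause of that assertion). Thus (ii), together with its ``moreover'' part, is disposed of at once.

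Next comes (i) $\Leftrightarrow$ (iv). By Theorem \ref{tprkw}(\ref{iprkws}), $E$ avoids weights $m,\dots,n$ precisely when $\id_E$ kills weight $i$ for every $i$ with $m\le i\le n$. For a fixed such $i$, Proposition \ref{puredetkw}(I) (the equivalence of its conditions 1 and 4) rewrites ``$\id_E$ kills weight $i$'' as the vanishing of $H_i(\id_E)$ for every pure homological functor $H$; and $H_i(\id_E)=\id_{H_i(E)}$, so this is the vanishing of $H_i(E)$. Quantifying over $m\le i\le n$ gives precisely (iv). The equivalence (i) $\Leftrightarrow$ (iii) is parallel: Corollary \ref{cgcomp}(\ref{ipcww}) reexpresses membership of $E$ in $\shgl_{\wgl\notin[m,n]}$ as the vanishing of $H^i(E)$ for all pure representable cohomological $H$ and all $m\le i\le n$, and by Theorem \ref{tpure}(4) combined with parts (\ref{itgpure}) and (\ref{itgemg}) of the present theorem these pure representables are exactly the functors $\shgl(-,J)$ with $J\in\obj\emg$. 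Unwinding $H^i=\shgl(-,J)\circ[-i]$ then converts this family of vanishings into the single perpendicularity statement (iii).

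The heavy lifting is all done by the general results, so I expect no serious obstacle; the only step demanding genuine care is the last translation. There one must read off the correct shift relating the index $i$ of $H^i$ to the twist of $J$, and --- crucially for the implication (iii) $\Rightarrow$ (i) --- one must use that $\obj\emg=(\cup_{i\neq 0}\cp[i])^{\perp}=(\shgl_{\wgl\ge 1}\cup\shgl_{\wgl\le -1})^{\perp}$ really exhausts \emph{all} pure representable functors (Theorem \ref{tpure}(4)), so that the perpendicularity in (iii) tests $E$ against every such functor and none is overlooked.
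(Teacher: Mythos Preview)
Your argument for part~(\ref{itgww}) is correct and follows essentially the same route as the paper. Both you and the paper obtain (i)$\Leftrightarrow$(ii) from Theorem~\ref{tprkw}(\ref{iwildef1},\ref{iwildef2}); you make the weight-Karoubian hypothesis explicit via Theorem~\ref{thegcomp}(\ref{itnpb}) and Proposition~\ref{pbw}(\ref{ikwkar}), which the paper leaves implicit. For (i)$\Leftrightarrow$(iii),(iv) the only difference is one level of indirection: the paper first reduces to $\id_E$ killing each weight $i$ and then invokes part~(\ref{itgkw}) of the present theorem (whose proof is Proposition~\ref{puredetkw}), whereas you go straight to Proposition~\ref{puredetkw}(I) and Corollary~\ref{cgcomp}(\ref{ipcww}) and then identify the pure representables with $\emg$ via Theorem~\ref{tpure}(4). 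The substance is identical.

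One small caution on the ``last translation'' you flag: with the paper's convention $H^i=H\circ[-i]$, for $H=\shgl(-,J)$ one has $H^i(E)=\shgl(E[-i],J)\cong\shgl(E,J[i])$, so the vanishing you obtain is $E\perp \emg[i]$ for $m\le i\le n$ rather than $E\perp\emg[-i]$. This matches what part~(\ref{itgkw}) actually gives (and the $\shtop$ case in Theorem~\ref{tsh}(\ref{itopkw})); the sign in the displayed condition~(iii) appears to be a typo in the statement, not a defect in your reasoning.
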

\begin{proof}
\ref{itgl}. $\ls$ can be assumed to be identical on $\shgl$ since it is left adjoint to the embedding of $\shgl$ %is a subcategory of 
 to $\shg$. Next, the only non-trivial isomorphism for morphism groups in the assertion is given by Corollary A.2.13  of \cite{kellyth}.

\ref{itgfine}. The fact that objects of the form $S_H^0$ form a connective subcategory of $\shg$ that compactly generates this category
%compactness of $S_H^0$ in $\shg$  for any closed subgroup $H$ of $G$ along with the negativity of the subcategory formed by objects of this type 
can be deduced from %Lemma I.5.3 of 
the results of \cite{lms}; see (the proof of) \cite[Theorem 4.1.1]{bwcp} for more detail. Thus applying the previous assertion we immediately obtain that $\hu$ is connective in $\shgl$. It remains to note  that the functor $\ls$ sends any  family of compact generators of $\shg$ into a one for $\shgl$ according to  Proposition A.2.8 of \cite{kellyth}.
Lastly, we apply Theorem \ref{thegcomp}(\ref{itnp1d}) to compute the heart of $\wgl$.

%\ref{itgrest}. %Applying 
 %Recall that $\shg(S_H^n,C) \cong \shgl(\ls(S_H^n,\ls(C)))$ (see assertion \ref{itgl}); thus it remains to recall the description of 
%apply  the corresponding case of 
 %$\shgl_{\wgl\ge n}$ provided by Theorem \ref{thegcomp}(\ref{itnpr}).

\ref{itgconn}. By definition, a $G$-spectrum $N$ is $n-1$-connected whenever $\pi_i^H(N)\cong \shg(S_H^i,N)= \ns$ for all $i<n$ and $H$ being any closed subgroup of $G$. Since $\shg(S_H^n,N) \cong \shgl(\ls(S_H^n),\ls(N))$ (see assertion \ref{itgl}), %Hence 
 it remains to apply Theorem \ref{thegcomp}(\ref{itnp1d}) to obtain all the statements in question.

Assertion \ref{itgpure} follows from Theorem \ref{thegcomp}(\ref{itnp2}) immediately. % (cf. also Remark \ref{rgensmash}(\ref{irspure})).
Moreover, assertion \ref{itgemg} is just the corresponding case of Theorem \ref{thegcomp}(\ref{itnp3}).

\ref{itgdeg}. According to assertion \ref{itgfine}, we can apply Corollary \ref{cgcomp}(\ref{ipc2}) to obtain that conditions 
(i), (ii), (v), and (vi)  are equivalent. Next, conditions (ii) and (iii) are equivalent since $\wgl$ is left non-degenerate (see Theorem \ref{thegcomp}(\ref{itnp1}) and Proposition \ref{prwkarloc}(1)), and applying Proposition \ref{pbw}(\ref{iort}) we obtain the equivalence of (iii) and (iv). Lastly, conditions (ii) and (vii) are equivalent by Theorem \ref{twkar}(I) (see condition I.\ref{icwpuh} of the theorem).

\ref{itgkw}. Conditions (i), (ii), and (iv) are equivalent according to Proposition \ref{puredetkw}. Next, conditions (ii) and (iii) are equivalent since for any  homological functor $H:\shgl\to \au$ the corresponding cohomological functor $P$ from $\shgl$ into $\au\opp$ is pure if and only if $H$ is.

\ref{itghur}. Since $\shgl$ is Karoubian,  %(see Remark \ref{rlcg}), 
 conditions (i) and (ii) are equivalent according to Theorem %\ref{tdegen}(I.3,II.1) 
 \ref{twkar}(II) (combined with assertions \ref{itgconn} and \ref{itgdeg}). Moreover, (i) is equivalent to (v) and (vi) by Theorem \ref{thegcomp}(\ref{ipc1}). 

Lastly, condition (iv) clearly implies condition (v), and the obvious argument that we have just used yields that conditions (iii) and (iv) are equivalent.
% It remains to recall that (i) is equivalent to (iv) according to Proposition \ref{puredetkw}(II). 

\ref{itgneg}. The first of these descriptions of $ \shgl_{\wgl\le 0}$ is given by Theorem \ref{thegcomp}(\ref{itnpb}). Next,
  $ \shgl_{\wgl\le 0}=   \perpp \shgl_{\wgl\ge 1}$ according to  Proposition \ref{pbw}(\ref{iort}). It remains to apply  Theorem \ref{thegcomp}(\ref{ipc3}) %ecall the definition of pure functors????
	to conclude the proof. 

\ref{itgww}. By definition, $E$ is without weights $m \dots n$ if and only if $\id_E$ kills these weights. Hence applying assertion \ref{itgkw} we obtain the equivalence of conditions (i), (iii), and  (iv). Lastly, conditions (i) and (ii) are equivalent according to Theorem \ref{tprkw}(\ref{iwildef1},\ref{iwildef2}).
\end{proof}

\begin{rema}\label{rshg}
\begin{enumerate}
\item\label{irghur} Let us explain that part \ref{itghur} of our theorem is a certain %converse for
 "unbounded improvement" of the natural Hurewicz-type theorem for this context.

We recall that the "usual" Stable  Hurewicz Theorem essentially says that in the case  $G=\{e\}$ (and $\lam=\z$; so, $\shg=\shtop$)  %essentially says that 
   a $\wgl=\wsp$-bounded below spectrum $E$ is connective if and only if its singular homology is concentrated in non-negative degrees. A certain equivariant version of this statement is given by Theorem 2.1(i) of \cite{lewishur} (cf. also Theorem 1.11 of ibid. and Proposition 7.1.2(f) of \cite{axstab}); note that one replaces singular homology by $H^{\cacp}$ in it (cf. part \ref{irgsing} of this remark). %Another version of 

%Thus 
%Recall now 
Now, it is easily seen that those essentially $\wgl$-positive objects that are $\wgl$-bounded below are connective. %(see  %Lemma \ref{ldeg}(3)).
  Hence part \ref{itghur} of our theorem naturally generalizes %statements of this 
 the aforementioned equivariant Hurewicz-type theorem to arbitrary objects of $\shgl$. Our generalization depends on the notion of acyclic spectra, and the corresponding part \ref{itgdeg} of our theorem also appears to be quite new (cf. also part \ref{irgsing} of this remark). 

\item\label{irghurww}
The notions of killing weights and %being without 
 avoiding weights $m,\dots,n$ (along with parts \ref{itgkw}, \ref{itgww}, and \ref{itgneg}  of our theorem) appear to be new in this context (even when restricted to the case $\cu=\shtop$) as well. In particular, we obtain %that one can obtain 
 certain canonical "decompositions" of spectra (see condition \ref{itgww}.(ii) in our theorem) %, recall that $\shgl$ is Karoubian, and apply ) 
by looking at their cohomology with coefficients in Eilenberg-Maclane spectra. This statement becomes especially nice when applied to $\shtop$; see Theorem \ref{tsh}(\ref{itopww}) below. 

\item\label{itgcacp} Recall that if $\lam=\z$ then the pure homological functor $H^{\cacp}$ is the %{\it ($RO(G)$-graded) 
 equivariant    ordinary homology with Burnside ring coefficients  functor $H^G_0$ considered in \cite{lewishur} (cf. also Definition X.4.1 of \cite{mayeq}), and for any Mackey functor $M$ the corresponding pure functor $H_M$ coincides with $H_G^0(-,M)$  in Definitions X.4.2 and \S XIII.4 of ibid.  Clearly, it follows that the functors  $H^{\cacp}$ and $H_M$ are closely related to these notions as well (for a general $\lam$ also). 

\item\label{irgsing} None of the descriptions of acyclic spectra in part \ref{itgdeg} of our theorem characterizes them "explicitly".
In the case $G=\{e\}$ our definition of this notion coincides with the one considered in \cite{marg}; see %Theorem \ 
Theorem  \ref{tsh}(\ref{itopsingh}) below. At least, this gives an explicit example of a non-zero acyclic spectrum (see Theorem 16.17 of \cite{marg}).

 One can possibly say more on %them 
acyclic spectra for $G\neq \{e\}$ via considering exact functors connecting equivariant stable homotopy categories corresponding to distinct groups. Note that some of these functors are weight-exact, and one can also apply Proposition \ref{puredetkw}(II) for treating "induced" (co)homology functors. 

The author conjectures that a spectrum $E$ is acyclic in $\shg$ if and only if $\Phi^{H}\circ i_H^*(E)\in \obj \shtop$ (where $i_H:H\to G$ runs through all inclusions of closed subgroups; see \S II.4 and  of Definition II.9.7 of \cite{lms}) is. % %varphi?! 
%?????!!!!!!!This would certainly be true if for any $H$ of this sort we have $H^{\cacp}_*(E)(S_H^0)\cong \hsing_*(\Phi^{H}\circ i_H^*(E))$. Note here that it suffices to verify this fact for $E\in \cp$. The author conjectures that this restricted case of the latter statement follows from Proposition V.9.6 of ibid. whenever $G$ is finite, and that the statement is wrong in general.\footnote{Recall also that loc. cit. gives a "classical" description of the orbit category $\hu$ (when $G$ is finite); this gives a  simple description of the category $\macg$.}

\item\label{irglam} Theorem \ref{tshgp1} certainly gives some more information on ("weights of")  objects of $\shgl$ whenever the corresponding category $\macg$ is of projective dimension at most $1$. So we note that this assumption is fulfilled whenever $G$ is a finite group of order $n$ and $1/n\in \lam$ (one should join  Theorem 2.1 of \cite{green} with the finite projective dimension statement established in \S6 of ibid. to obtain this fact). 

\item\label{irguniv}
It appears that the statements contained in  \cite{lms} are actually sufficient to generalize all the assumptions of our theorem %can be easily extended 
 to the case where $\cu$ is the stable homotopy category of $G$-spectra indexed on a not (necessarily) complete $G$-universe. In particular, %if the corresponding 
 this universe can be $G$-trivial (i.e., $G$ acts trivially on its spaces); this %corresponds to the 
 allows us to apply it to the corresponding representable functors as considered in \S IV.1 of \cite{bred}.

\item\label{irgother} Let us mention some other nice properties of $\wg$.

Firstly, it restricts to the subcategory of compact objects of $\shgl$; cf. Theorem 4.1.1(2) of \cite{bwcp}. Next, the class $\shgl_{\wg\ge 0}$ can also be described as $\shgl_{t\ge 0}$ for a certain {\it Postnikov $t$-structure} $t$ (see Definition 4.3.1(I) and Proposition 4.3.3 of ibid.); yet this $t$-structure does not restrict to compact objects of $\shgl$.

Note also that our theory provides a certain inverse Hurewicz-type theorem (and also other "decompositions" as well as several  new definitions; see part \ref{irghurww} of this remark) for the so-called  connective stable homotopy theory as discussed in \S7 of \cite{axstab}; see Remark 4.3.4(2) of \cite{bwcp} for more detail.
\end{enumerate} %cellular towers?!
\end{rema}

Now we apply our results to the %("topological") 
 stable homotopy category $\shtop$ (whose detailed description can be found in \cite{marg}); some of these statements were already stated in (Theorem 4.2.1 of) \cite{bwcp}. This corresponds to the case of a trivial $G$ and $S$ in Theorem \ref{tshg}; so we will write $\emo$ for $\emg$ and $w^{sph}$ for $\wgl$ in this case (and  use the remaining notation from this theorem). %; recall also there is a list of references to the main weight structure definitions of this paper preceding that theorem).

\begin{theo}\label{tsh}
Set $\cp=\{S^0\}$; assume $m\le n\in \z$ and $g:E\to E'$ is an $\shtop$-morphism. %let $m\le n\in \z$ and $E\in \obj \shtop$.

 Then the following statements are valid.

\begin{enumerate}
\item\label{itoptriv}
The functor $\shtop(S^0,-)$ gives equivalences $\hwsp\to \abfr$ (the category of free abelian groups) and $\emo\to \ab$; thus $\aucp$ is equivalent to $\ab$ as well.

%Moreover, $\wsp$ restricts to the category of finite spectra, and the heart of this restriction is equivalent to the category of free finitely generated abelian groups.

\item\label{itopsingh}
The functor $\cacp$ is essentially the singular homology functor $\hsing$; respectively, acyclic spectra in $\shtop$ are characterized by the vanishing of their singular homology (cf. \S6.2 of \cite{marg}).

\item\label{itopsingc}
For any abelian group $\gam$ and the corresponding spectrum $\egam\in \obj \emo$ the functor $\shtop(-,\egam)$ is isomorphic to  the singular cohomology with coefficients in $\gam$ one.

\item\label{itopkw} $g$ kills weight $m$ if and only if $\hsing_m(g)=0$, the class of $t(g)$ in the group $\ext_{{\ab}}(\hsing_{m-1}(E),\hsing_m(E'))$ (see Theorem \ref{tshgp1}(\ref{itpkw})) %as given by here we use the identification provided by the previous two assertions) 
vanishes, and the morphism $\hsing_{m-1}(g)$ factors through a  free abelian group. %projective object of $\aucp$. 

This condition is also equivalent to the vanishing of $\hsingc^m(-,\gam)(g)$ for any abelian group $\gam$.

\item\label{itopww} %The following conditions are equivalent as well.(i)
 $E$ is an extension of an $n$-connected spectrum by an element of $ \shtop_{\wsp\le m-1}$ if and only if 
$\hsing_j(E)=0$ for $m\le j \le n$  and $\hsing_{m-1}(E)$ is a free abelian group.
%without weights $m,\dots,n$
%here we use the identifications provided by the previous two 

\item\label{itopass} The following conditions are equivalent.

 (i) $E\in \shtop_{\wsp\le n}$;

(ii) $\hsing_i(E)=\ns$ for $i>n$ and $\hsing_n(E)$ is a free abelian group;

(iii)  $\hsingc^i(E,\gam)=\ns$ for any $i>n$  and any abelian group $\gam$;

(iv) $E$ is an {\it$n$-skeleton} (of %a certain 
 some spectrum) in the sense of %ibid. %
\cite[\S6.3]{marg} (cf. also Definition 6.7 of \cite{christ}).

\item\label{itopcell} %a wpf??????!!
A %weight 
{$\wsp$-Postnikov tower} (see Definition 1.3.3 of \cite{bwcp}) of  a spectrum $X\in \obj \shtop$ is the same thing as a cellular tower for $E$ in the sense of %the beginning of  \S6.3 of 
 \cite[\S6.3]{marg}.

\item\label{itophur} $E$ is an extension of a connective spectrum by an acyclic one if and only if $\hsing_j(E)=\ns$ for all $j<0$; this is also equivalent to the vanishing of $H_{sing}^j(E,\q/\z)$  for all $j<0$.
\end{enumerate}

\end{theo}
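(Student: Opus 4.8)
The plan is to obtain this statement as a direct specialization of the equivariant Hurewicz theorem of Theorem \ref{tshg}(\ref{itghur}) to the trivial group (so that $\shgl=\shtop$ and $\wgl=\wsp$), translating the abstract conditions appearing there into the homological and cohomological language used here. Recall that among the equivalent conditions listed in that theorem are: (ii) $E$ is an extension of a connective spectrum by an acyclic one; (v) $E\perp I[j]$ for all $j<0$, where $I$ is the chosen injective cogenerator of $\aucp$; and (vi) $H_j^{\cacp}(E)=0$ for all $j<0$. Thus it suffices to reinterpret conditions (vi) and (v).

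First I would treat condition (vi). By Theorem \ref{tsh}(\ref{itopsingh}) the functor $\cacp$ is, up to the equivalence $\aucp\cong\ab$ of Theorem \ref{tsh}(\ref{itoptriv}), the singular homology functor; hence $H_j^{\cacp}(E)=\hsing_j(E)$ for every $j$. Consequently condition (vi) reads precisely as ``$\hsing_j(E)=\ns$ for all $j<0$''. Combined with the fact that condition (ii) is verbatim the extension statement of our theorem, this yields the first asserted equivalence.

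Next I would reinterpret condition (v) to obtain the cohomological formulation. Since $\aucp\cong\ab$, the injective cogenerator $I$ may be taken to be $\q/\z$; under the equivalence $\emo\cong\ab$ of Theorem \ref{tsh}(\ref{itoptriv}) the corresponding object of $\emo\subset\shtop$ is the Eilenberg--MacLane spectrum $\emo^{\q/\z}$, which by Theorem \ref{tsh}(\ref{itopsingc}) represents singular cohomology with $\q/\z$-coefficients. With the degree conventions of \S\ref{snotata} one has $\shtop(E,\emo^{\q/\z}[j])=\hsingc^j(E,\q/\z)$, so condition (v), namely $\shtop(E,I[j])=\ns$ for all $j<0$, is exactly ``$\hsingc^j(E,\q/\z)=\ns$ for all $j<0$''. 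This gives the second equivalence, completing the argument.

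Being a specialization of already-proven results, the proof presents no serious obstacle; the only points demanding care are the bookkeeping identifications --- that the chosen injective cogenerator of $\ab$ may indeed be taken to be $\q/\z$, and that the shift $[j]$ in condition (v) matches the cohomological degree $j$, so that the two ranges ``$j<0$'' coincide. Should one prefer to avoid committing to a specific cogenerator, the second equivalence also follows from the universal coefficient theorem: as $\q/\z$ is injective the $\operatorname{Ext}^{1}$-term vanishes, giving $\hsingc^j(E,\q/\z)\cong\operatorname{Hom}(\hsing_j(E),\q/\z)$, and the latter is $\ns$ if and only if $\hsing_j(E)=\ns$ because $\q/\z$ cogenerates $\ab$.
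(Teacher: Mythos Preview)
Your proposal treats only assertion \ref{itophur}, not the full theorem. For that assertion your argument is correct and coincides with the paper's: the paper likewise obtains \ref{itophur} by specializing Theorem \ref{tshg}(\ref{itghur}) to the trivial group and translating conditions (ii), (v), (vi) there via the identifications in assertions \ref{itoptriv}--\ref{itopsingc} (so $H^{\cacp}_j=\hsing_j$, $I=\emo^{\q/\z}$, and $\shtop(-,I[j])=\hsingc^j(-,\q/\z)$). Your alternative universal-coefficient justification for the $\q/\z$-cohomology condition is also fine and gives nothing essentially different.

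However, the statement has eight parts and your proposal says nothing about assertions \ref{itoptriv}--\ref{itopcell}. In the paper these are handled separately: \ref{itoptriv}--\ref{itopsingc} are taken from Theorem 4.2.1 of \cite{bwcp} (and Theorem \ref{thegcomp}); assertions \ref{itopkw}, \ref{itopww}, and the equivalence of (i)--(iii) in \ref{itopass} require the projective-dimension-one machinery of Theorem \ref{tshgp1} (since $\aucp\cong\ab$ has global dimension $1$) combined with Theorem \ref{tshg}(\ref{itgkw},\ref{itgneg}); and condition (iv) in \ref{itopass} together with \ref{itopcell} are obtained by citing Theorem 4.2.1(4,5) of \cite{bwcp}. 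None of this is covered by specializing Theorem \ref{tshg}(\ref{itghur}) alone, so as a proof of the whole theorem your proposal is incomplete.
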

\begin{proof}
Assertions \ref{itoptriv}--\ref{itopsingc} %\ref{itopsingh} 
 easily follow from Theorem \ref{thegcomp}; they are also contained in Theorem 4.2.1 of \cite{bwcp}.
These facts also yield that Theorem \ref{thegcomp}(\ref{itghur})  implies our assertion \ref{itophur}.

Next, the category $\ab\cong \aucp$ is of  cohomological dimension $1$; hence we can combine Theorem \ref{tshgp1} with the preceding assertions along with %the corresponding parts of  
Theorem \ref{thegcomp}(\ref{itgkw},\ref{itgneg}) to obtain assertions \ref{itopkw},  \ref{itopww}, %(cf. Theorem \ref{tshg}(\ref)),
   and the equivalence of conditions (i)--(iii)  in assertion \ref{itopass}. Moreover, the latter equivalence statements implies that these conditions  are fulfilled if and only if $E$ is an $n$-skeleton, and also that assertion \ref{itopcell} is valid according to Theorem 4.2.1(4,5) of \cite{bwcp}. 
	%are equivalent to  
%Alternatively, one may apply the aforementioned Proposition 6.16  of \cite{marg} with Theorem \ref{tshgp1}(\ref{itpww})???!!
\end{proof}

%???\begin{rema}\label{rmarg}

%adjacent t: remark?!


\begin{thebibliography}{1}

\bibitem[AKS02]{andkahn} Andr\'e Y., Kahn B., O'Sullivan P., Nilpotence, radicaux et structures mono\"idales //Rendiconti del Seminario Matematico della Universit\'a di Padova, vol. 108 (2002), 107--291.


%\bibitem[Ayo17]{ayconj} Ayoub J., Motives and algebraic cycles: a selection of conjectures and open questions,  in:  Hodge theory and $L^2$-analysis, 87--125, Adv. Lect. Math. (ALM), 39, Int. Press, Somerville, MA, 2017.

\bibitem[Ayo18]{ayoubcon} Ayoub J., Topologie feuillet\'ee et la conservativit\'e des r\'ealisations classiques en caract\'eristique nulle, preprint, 2018, \url{http://user.math.uzh.ch/ayoub/PDF-Files/Feui-ConCon.pdf}


\bibitem[BaS01]{bashli} Balmer P.,  Schlichting M., Idempotent completion of triangulated categories// J. of Algebra 236(2),  2001, 819--834. 

\bibitem[Bar05]{barrabs} Barr M., Absolute homology// Theory and Applications of Categories vol. 14,  53--59, 2005.

 \bibitem[BBD82]{bbd} Beilinson A., Bernstein J., Deligne P., Faisceaux pervers// Asterisque 100, 1982, 5--171.

\bibitem[BeV08]{bev} Beilinson A., Vologodsky V., A DG guide to Voevodsky motives//  Geom. Funct. Analysis, vol. 17(6), 2008, 1709--1787.

%\bibitem[BoN93]{bokne} B\"okstedt M., Neeman A., Homotopy limits in triangulated categories// Comp. Math. 86 (1993), 209--234. 

\bibitem[Bon10]{bws} Bondarko M.V., Weight structures vs.  $t$-structures; weight filtrations,  spectral sequences, and complexes (for motives and in general)//  J. of K-theory, v. 6(3), 2010, 387--504, see also \url{http://arxiv.org/abs/0704.4003}

\bibitem[Bon18a]{binters} Bondarko M.V.,  Intersecting the dimension  and slice filtrations for motives//  Homology, Homotopy and Appl., vol. 20(1), 2018, 259--274. 

\bibitem[Bon18b]{bwcp}  Bondarko M.V.,  On weight complexes,  pure functors, and detecting  weights, preprint, 2018, \url{https://arxiv.org/abs/1812.11952}

%\bibitem[BoI15]{bonivan} Bondarko M.V., Ivanov M.A., On Chow weight  structures for $cdh$-motives with integral coefficients//  Algebra i Analiz, v. 27(6), 2015, 14--40; see also \url{http://arxiv.org/abs/1506.00631}.


\bibitem[BoK18]{bokum}  Bondarko M.V., Kumallagov D.Z., On Chow weight structures without projectivity and resolution of singularities//   Algebra i Analiz 30(5), 2018, 57--83; see also \url{https://arxiv.org/abs/1711.08454} 

%\bibitem[BoL16]{bkl}   Bondarko M.V., Luzgarev A.Ju., On relative $K$-motives,  weights for them, and negative $K$-groups, preprint, 2016, \url{http://arxiv.org/abs/1605.08435}


%\bibitem[BoS18a]{bos}  Bondarko M.V., Sosnilo V.A., Non-commutative localizations of  additive categories and weight structures;  applications to birational motives//  J. of the Inst. of Math. of Jussieu, vol. 17(4), 2018,  785--821.


\bibitem[BoS18]{bonspkar} Bondarko M.V., Sosnilo V.A., On constructing weight structures and extending them to idempotent extensions// Homology, Homotopy and Appl., vol. 20(1), 2018, 37--57.

\bibitem[BoS19]{bsnew} Bondarko M.V., Sosnilo V.A., On purely generated $\alpha$-smashing weight structures and weight-exact localizations// J. of Algebra 535(2019),  407--455.


\bibitem[Bre67]{bred} Bredon G.E., Equivariant cohomology theories, Lecture Notes in Mathematics 34, Springer, 1967. 

\bibitem[Chr98]{christ} Christensen J., Ideals in triangulated categories: phantoms, ghosts and skeleta// Advances in Math. 136.2 (1998), 284--339.

%\bibitem[CiD12]{cd} Cisinski D.-C., D\'eglise F., Triangulated categories of mixed motives, preprint, 2012, \url{http://arxiv.org/pdf/0912.2110v3.pdf}
\bibitem[Deg11]{degmod} D\'eglise F.,  Modules homotopiques (Homotopy modules)// Doc. Math. 16 (2011), 411--455. 

\bibitem[GiS96]{gs} Gillet H., Soul\'e C. Descent, motives and $K$-theory// J.  f. die reine und ang. Math. v. 478, 1996, 127--176.

\bibitem[Gre92]{green} Greenlees J.P.C., Some remarks on projective Mackey functors // J. of Pure and Appl. Algebra 81(1), 1992, 17--38.

\bibitem[HPS97]{axstab}  Hovey M., Palmieri J., Strickland N., Axiomatic Stable Homotopy Theory, American Mathematical Society, 1997, 114 pp.

%\bibitem[HKS18]{hukriz} Hu P., Kriz I., Somberg P. On some adjunctions in equivariant stable homotopy theory //Algebraic \& Geometric Topology 18(4), 2018, 2419--2442.

\bibitem[Kel06]{dgk} Keller B., %Deriving
On differential graded categories, International Congress of Mathematicians. Vol. II, 151--190, Eur. Math. Soc., Z\"urich, 2006. 
 

\bibitem[Kel12]{kellyth} Kelly S., Triangulated categories of motives in positive characteristic, PhD thesis of Universit\'e
Paris 13 and of the Australian National University,  2012, \url{http://arxiv.org/abs/1305.5349}


\bibitem[Lew92]{lewishur} Lewis L.G. Jr, The equivariant Hurewicz map// Trans. of the American Math. Soc. 329(2), 1992, 433--472.

%\bibitem[LMC86]{rog} Lewis L.G. Jr, May J.P.,  McClure J. E., Ordinary $RO(G)$-graded cohomology// Bull. of the American Math. Soc. 4(2),  1981, 208--212.

\bibitem[LMSC86]{lms} Lewis L.G. Jr, May J.P., Steinberger M.,  McClure J.E., Equivariant stable homotopy theory. Lecture notes in mathematics 1213 (1986).

\bibitem[Mar83]{marg} Margolis H.R., Spectra and the Steenrod Algebra: Modules over the Steenrod Algebra and the Stable Homotopy Category, Elsevier, North-Holland, Amsterdam-New York, 1983.

\bibitem[May96]{mayeq}  May J.P., Equivariant homotopy and cohomology theory, %: Dedicated to the memory of Robert J. Piacenza, 
 with contributions by M. Cole, G. Comezana, S. Costenoble, A. D. Elmendorf, J. P. C. Greenlees, L. G. Lewis, Jr., R. J. Piacenza, G. Triantafillou, and S. Waner, CBMS Regional Conference Series in Mathematics 91, American Mathematical Society, 1996.
 %with contributions of   M. Cole et al., American Mathematical Soc., 1996. %91.


%\bibitem[MVW06]{vbook} Mazza C., Voevodsky V.,  Weibel Ch., Lecture notes on motivic cohomology, Clay Mathematics Monographs, vol. 2, 2006.

\bibitem[Nee01]{neebook} Neeman A., Triangulated Categories, Annals of Mathematics Studies 148 (2001), Princeton University Press, viii+449 pp.

%\bibitem[Pau12]{paucomp}  Pauksztello D., A note on compactly generated  co-t-structures// Comm. in Algebra, vol. 40(2), 2012, 386--394.

\bibitem[Pau08]{konk} Pauksztello D., Compact cochain objects in triangulated categories and co-t-structures//     Central European Journal of Mathematics, vol. 6, n. 1, 2008, 25--42.


\bibitem[Sch11]{schnur} Schn\"urer O., Homotopy categories and idempotent completeness, weight structures and weight complex functors, preprint, 2011, \url{http://arxiv.org/abs/1107.1227}

\bibitem[Sos19]{sosnwc}  Sosnilo V.A., Theorem of the heart in negative $K$-theory for weight structures, Doc. Math.  24 (2019) 2137--2158.

\bibitem[Tho97]{thom} Thomason R.W., The classification of triangulated subcategories// Comp. Math.,  vol. 105(1),  1997, 1--27.


\bibitem[Voe95]{voevnilp} Voevodsky V. A nilpotence theorem for cycles algebraically equivalent to zero //Int. Math. Res. Notices, vol. 1995(4), 1995, 187--199.

\bibitem[Wil09]{wild} Wildeshaus J., Chow motives without projectivity// Compositio Mathematica, v. 145(5), 2009, 1196--1226.


%\bibitem[Wil15]{wildshim} Wildeshaus J., On the interior motive of certain Shimura varieties: the case of Picard surfaces// Manuscripta Math., vol. 148(3),  2015, 351--377.


 %\bibitem[Wil16]{wildat} Wildeshaus J., Notes on Artin-Tate motives,  %Bost, J.-B., Fontaine, J.-M. (eds.) in: Autour des motifs--Ecole d\'et\'e Franco-Asiatique de G\'eom\'etrie Alg\'ebrique et de Th\'eorie des Nombres. Vol. III, Panoramas et Synth\'eses, 49, Soc. Math. France, 2016, 101--131.


%\bibitem[Wil17]{wildab} Wildeshaus J.,  Intermediate extension of Chow motives of Abelian type //Adv. in Math., vol. 305 (2017), 515--600.

\bibitem[Wil18]{wildcons} Wildeshaus J., Weights and conservativity,  Algebraic Geometry 5(6), 2018, 686--702. %, see also \url{http://arxiv.org/abs/1509.03532}
\end{thebibliography}
\end{document}